\documentclass{amsart}
\usepackage{amsmath,amsxtra,amssymb,latexsym, amscd,amsthm}
\usepackage{graphicx}

\usepackage{pgf,tikz}
\usetikzlibrary{arrows}
\usepackage[backref]{hyperref}

\usepackage{pgf,tikz}
\usetikzlibrary{arrows}
\usepackage[mathscr]{eucal}

\setlength{\textheight}{43pc}
\setlength {\textwidth}{28pc}


\DeclareMathOperator{\Div}{div}

\DeclareMathOperator{\dist}{dist}
\DeclareMathOperator{\Gr}{Gr}
\DeclareMathOperator{\Ric}{Ric}
\DeclareMathOperator{\Area}{\mathcal{A}}
\DeclareMathOperator{\const}{const}

\begin{document}

\title[The Dirichlet problem for the minimal surface equation in ${\rm Sol_3}$]{The Dirichlet problem for the minimal\\surface equation in $\rm S\lowercase{ol}_3$,\\
with possible infinite boundary data}

\author{M\lowercase{inh} H\lowercase{oang} NGUYEN}


\subjclass[2000]{Primary: 53A10, 53C42; Secondary: 53A35, 53B25}

\keywords{Minimal surface, Killing graph, Dirichlet problem, Jenkins-Serrin type theorem}


\address{Laboratoire \'Emile Picard, UMR 5580, 
 Universit\'e Paul Sabatier, 
118 route de Narbonne, 31062 Toulouse cedex 04, France}

\address{Laboratoire d'Analyse et de Math\'ematiques Appliqu\'ees, UMR 8050,  
Universit\'e Paris-Est, Cit\'e Descartes
 5 bd Descartes, Champs-sur-Marne, 77454 Marne-la-Vallée cedex 2, France}

\email{minh-hoang.nguyen@math.univ-toulouse.fr}


\begin{abstract}
In this paper, we study the Dirichlet problem for the minimal surface equation in  ${\rm Sol_3}$ with possible infinite boundary data, where ${\rm Sol_3}$ is the non-abelian solvable $3$-dimensional Lie group equipped with its usual left-invariant metric that makes it into a model space for one of the eight Thurston geometries.
Our main result is a Jenkins-Serrin type theorem which establishes necessary and sufficient conditions for the existence and uniqueness of certain minimal Killing graphs with a non-unitary Killing vector field in ${\rm Sol_3}$.
\end{abstract}

\maketitle


\theoremstyle{plain}
\newtheorem{dl}{Theorem}[section]
\newtheorem{bd}[dl]{Lemma}
\newtheorem{md}[dl]{Proposition}
\newtheorem{hq}[dl]{Corollary}
\newtheorem{trh}{Case}[section]
\newtheorem{kd}{Assertion}[section]

\theoremstyle{definition}
\newtheorem{dn}[dl]{Definition}
\newtheorem{cy}[dl]{Remark}

\numberwithin{equation}{section}
\numberwithin{figure}{section}

\setcounter{tocdepth}{1}

\newcommand{\dsf}{\mathsf{d}}

\newcommand{\rr}{\mathbb R}
\newcommand{\hh}{\mathbb H}
\newcommand{\dhr}{\partial}
\newcommand{\bn}{\overline{\nabla}}
\newcommand{\sol}{{\rm Sol_3}}

\newcommand{\Pcal}{\mathcal{P}}
\newcommand{\Acal}{\mathcal{A}}

\newcommand{\Ucal}{\mathcal{U}}
\newcommand{\Vcal}{\mathcal{V}}

\newcommand{\Dcal}{\mathcal{D}}
\newcommand{\Dbb}{\mathbb{D}}

\newcommand{\euc}{{\rm euc}}
\newcommand{\hyp}{{\rm hyp}}

\newcommand{\vh}[1]{\left\langle #1\right\rangle}
\newcommand{\td}[1]{\left\| #1\right\|}

\newcommand{\bvc}{\partial_\infty'\hh^2}
\newcommand{\Mcal}{\mathfrak{M}}
\newcommand{\Hcal}{\mathcal{H}}
\newcommand{\Ngoac}[1]{\left(#1\right)}


\section{Introduction}\label{sec1}
In \cite{JS66}, Jenkins and Serrin considered bounded domains $\Omega\subset\rr^2$, with $\dhr\Omega$ composed of straight line segments and convex arcs. They found necessary and sufficient conditions on the lengths of the sides of inscribed polygons, which guarantee the existence of a minimal graph over $\Omega$, taking certain prescribed values (in $\rr\cup\{\pm\infty\}$) on the components of $\dhr\Omega$. 
Perhaps the simplest example is $\Omega$ with a geodesic triangle with boundary data zero on two sides and $+\infty$ on the third side. The conditions of Jenkins-Serrin reduce to the triangle inequality here and the solution exists. It was discovered by Scherk in 1835. This also works on a parallelogram with sides of equal length, with data $+\infty$ on opposite two sides and $-\infty$ on the other two sides. This solution was also found by Scherk.

In recent years there has been much activity on this Dirichlet problem in $M^2\times\rr$ where $M$ is a two dimensional Riemannian manifold (see \cite{CR10, NR02, Pin09, You10}). When $M$ is the hyperbolic plane $\hh^2$, there are  non-compact domains for which this problem has been solved, and interesting applications have been obtained (see \cite{CR10,GR10,MRR11}). 
In the previous cases, authors considered the Killing graphs where the Killing vector field is unitary.

The purpose of this paper is to consider the problem of type Jenkins-Serrin on bounded domains and some unbounded domains in $\sol$ which is a three-dimensional homogeneous Riemannian manifold can be viewed as $\rr^3$ endowed with the Riemannian metric
\begin{equation}
\dsf s^2=e^{2x_3}\dsf x_1^2+e^{-2x_3}\dsf x_2^2+\dsf x_3^2.
\end{equation}
The change of coordinates
\begin{equation*}
x:=x_2,\qquad y:= e^{x_3},\qquad t:=x_1,
\end{equation*}
turns this model into $\sol=\{(x,y,t)\in\rr^3:y\ge 0 \}$ with the Riemannian  metric
\begin{equation}
\dsf s^2=\dfrac{\dsf x^2+\dsf y^2}{y^2}+y^2\dsf t^2.
\end{equation}
\\
 By using the Poincar\'e half-plane model, $\sol$ has the form of a warped product  $\sol=\hh^2\times_y\rr$.

For every function $u$ of class $C^2$ defined on the domain $\Omega\subset\hh^2$, we denote by $\Gr(u)=\{(p,t)\in\sol:p\in\Omega,t=u(p)\}$  a surface in $\sol$ and is called $\dhr_t$-graph of $u$. $\Gr(u)$ is a minimal surface if and only if $u$ satisfies the equation
 (see Proposition \ref{equ-minimale})
\begin{equation}\label{intro1}
\Mcal u:=\Div\left(\frac{y^2\nabla u}{\sqrt{1+y^2\td{\nabla u}^2}}\right)=0.
\end{equation}
We will consider the case that the boundary $\dhr\Omega$ is composed of the families  of "convex" arcs $\{A_i\}$,  $\{B_j\}$ and  $\{C_k\}$. We give necessary and sufficient conditions on the geometry of the domain $\Omega$ which assure the existence of a minimal solution $u$ defined in $\Omega$ and $u$ assumes the value $+\infty$ on each $A_i$, $-\infty$ on each $B_j$ and prescribed
continuous data on each $C_k$.

We see that $\dhr_t$  is Killing and normal to the plane $\hh^2$. A special point of the problem is that the vector field $\dhr_t$ is not unitary. 
The important point to note here is that when $\gamma$ is a curve in $\hh^2$, if $\gamma$ is a geodesic of $\hh^2$, the surface $\gamma\times\rr$ is no longer minimal in this warped product Riemannian manifold $\sol$. 
Instead of this, $\gamma\times\rr$ is minimal in $\sol$ if and only if $\gamma$ is an Euclidean geodesic (see Corollary \ref{cor2.2}). Hence, these 
Euclidean geodesics will play an important role in our problem.
Moreover, because of the non-unitary field $\dhr_t$, we don't use the hyperbolic length to state our problem. 
In $M^2\times\rr$ the length of a compact curve $\gamma\subset M^2$ is just the area of $\gamma\times[0,1]$ in which we are interested. 
However, for a curve $\gamma\in\hh^2$, the area calculated in $\sol$ of $\gamma\times[0,1]$ is the Euclidean length of $\gamma$ (see Proposition \ref{pro2.3}).

The problem of type Jenkins-Serrin is also solved for some unbounded domains. The main idea in \cite{CR10} is to approximate an unbounded domain $\Omega$ by a sequence bounded domain $\Omega_n$ by cutting $\Omega$ with horocycles.

In our case, we use the Euclidean geodesics, Euclidean length instead of the geodesics and the hyperbolic length, so we can't use the horocycle of $\hh^2$ to consider the problem de type Jenkins-Serrin on an unbounded domain. However, we can generalize the previous result for some unbounded domains by defining the flux for the non-compact arcs instead of using the horocycles. Our main result (Theorem \ref{J-S type1}) may be stated as follows.

\bigskip

\noindent\textbf{Theorem.}\textit{\quad Let $\Omega$ be a Scherk domain in $\hh^2$ with the families of Euclidean geodesic arcs $\{A_i \},\{B_i \}$ and of Euclidean mean convex arcs $\{C_i \}$.
\begin{itemize}
\item[ (i)] If the family $\{C_i\}$ is non-empty, there exists a solution to the Dirichlet problem on $\Omega$ if and only if
\begin{equation*}
2a_\euc(\Pcal)<\ell_\euc(\Pcal),\qquad 2b_\euc(\Pcal)<\ell_\euc(\Pcal)
\end{equation*}
for every Euclidean polygonal domain inscribed in $\Omega$. Moreover, such a solution is unique if it exists.
\item[ (ii)] If the family $\{ C_i\}$ is empty, there exists a solution to the Dirichlet problem on $\Omega$ if and only if
\begin{equation*}
a_\euc(\Pcal)=b_\euc(\Pcal)
\end{equation*}
when $\Pcal=\Omega$ and the inequalities in (i) hold for all other Euclidean polygonal domains inscribed in $\Omega$. Such a solution is unique up to an additive constant, if it exists.\end{itemize}
}
\bigskip

We will have similar result for the Dirichlet problem for the minimal surface equation in $\sol$ with respect to $\dhr_x$-graph. In the case of $\dhr_y$-graph ($\dhr_y$ is not a Killing vector field),
Ana Menezes solved  on some "small" squares in the $(x,t)$-plane with data $+\infty$ on opposite two sides and $-\infty$ on the other two sides (see \cite[Theorem 2]{Men13}).

We have organized the contents as follows: 
In Section \ref{sec2}, we will review some of the standard facts on $\sol$ and establish minimal surface equations. 
Section \ref{sec3} will prove the maximum principle for the minimal surface equations, show the existence of solutions. 
A local Scherk surface in $\sol$ will be constructed in section \ref{sec4}. Sections \ref{sec5} will be devoted to proving the monotone convergence theorem and describing the divergence set. Our main results are stated and proved in Section \ref{sec6}.

\bigskip

\tableofcontents

\section{Preliminaries}\label{sec2}

\subsection{A model of $\sol$}

The three-dimensional homogeneous Riemannian manifold $\sol$ can be viewed as $\rr^3$ endowed with the Riemannian metric
\begin{equation}
\dsf s^2=e^{2x_3}\dsf x_1^2+e^{-2x_3}\dsf x_2^2+\dsf x_3^2
\end{equation}
where $(x_1,x_2,x_3)$ are canonical coordinates of $\rr^3$ (see for instance \cite{DHM09} and the references given there for more details). The space $\sol$ has a Lie group structure with respect to which the above metric is left-invariant. The group structure is given by the multiplication
$$(x_1,x_2,x_3)\cdot (y_1,y_2,y_3)=\Ngoac{x_1+e^{-x_3}y_1,x_2+e^{x_3}y_2,x_3+y_3}.$$
In this paper, we don't use the Lie group structure.
The change of coordinates
\begin{equation*}
x:=x_2,\qquad y:= e^{x_3},\qquad t:=x_1,
\end{equation*}
turns this model into $\sol=\{(x,y,t)\in\rr^3:y\ge 0 \}$ with the Riemannian  metric
\begin{equation}\label{equ2.2}
\dsf s^2=\dfrac{\dsf x^2+\dsf y^2}{y^2}+y^2\dsf t^2.
\end{equation}
In the present paper, the model used for the hyperbolic plane is the Poincar\'e half-plane, that is,
$$\hh^2=\{(x,y)\in\rr^2:y>0\}$$
endowed with the Riemannian  metric $\frac{\dsf x^2+\dsf y^2}{y^2}$. Hence $\sol$ has the form of a warped product $\sol=\hh^2\times_y\rr.$ 
From \eqref{equ2.2} we have
\begin{equation}
\td{\dhr_x}=\td{\dhr_y}=\frac{1}{y},\quad \td{\dhr_t}=y,\quad \vh{\dhr_x,\dhr_y}=\vh{\dhr_x,\dhr_t}=\vh{\dhr_y,\dhr_t}=0.
\end{equation}
Hence $\left\{y\dhr_x,y\dhr_y,\frac{1}{y}\dhr_t\right\}$ is an orthonormal frame of $\sol$.
Translations along the $t$-axis 
\begin{equation}\label{equ2.5}
T_h:\sol\to\sol,\qquad (x,y,t)\mapsto (x,y,t+h)
\end{equation}
are isometries. Therefore the vertical vector field $\dhr_t$ is a Killing vector field. Note that $\dhr_t$ is not unitary.

Let us denote by $\bn$ the Riemanian connexion of $\sol$ and by $\nabla$ the one in $\hh^2$.
By using Koszul's formula
\begin{align}\label{equKoszul}
2\vh{\bn_XY,Z}=& X\vh{Y,Z}+Y\vh{Z,X}-Z\vh{X,Y}\\
&-\vh{[X,Y],Z}-\vh{[Y,Z],X}+\vh{[Z,X],Y}\notag
\end{align}
for any vector field $X,Y,Z$ of $\sol$, we obtain
\begin{gather}
\overline\nabla_{\dhr_x}\dhr_x=\frac{1}{y}\dhr_y,\quad \overline\nabla_{\dhr_x}\dhr_y=\overline\nabla_{\dhr_y}\dhr_x=-\frac{1}{y}\dhr_x,\quad \overline\nabla_{\dhr_y}\dhr_y=-\frac{1}{y}\dhr_y,\\
\bn_{\dhr_t}\dhr_t=-y^3\dhr_y,\quad \bn_{\dhr_y}\dhr_t=\bn_{\dhr_t}\dhr_y=\frac{1}{y}\dhr_t,\\
\bn_{\dhr_x}\dhr_t=\bn_{\dhr_t}\dhr_x=0.
\end{gather}

Hence, the surfaces $\{t=\const\}$ and $\{x=\const\}$ are the totally geodesic surfaces in $\sol$ (Note that a totally geodesic submanifold $\Sigma\subset M$ is characterized by the fact that $\bn_XY$ is a tangent vector field of $\Sigma$ for all tangent vector fields $X,Y$ of $\Sigma$, where $\bn$ is the Riemannian connexion of $M$). The surfaces $\{y=\const\}$ are minimal, are not totally geodesic surfaces and are isometric to $\rr^2$.

\subsection{Euclidean geodesic}
Firstly, note that the vertical lines $\{p\}\times\rr\subset\sol$ with $p=(x,y)\in\hh^2$ aren't geodesics in $\sol$. Indeed, let $p=(x,y)$ be a point of $\hh^2$. A unit speed parametrization of $\gamma:=\{p\}\times\rr$ is
$\gamma:\rr\to\sol,\quad t\mapsto \left(x,y,\frac{t}{y}\right).$
One has $\gamma'=\frac{1}{y}\dhr_t$. Thus $\frac{\dsf }{\dsf t}\gamma'=\bn_{\frac{1}{y}\dhr_t}\left(\frac{1}{y}\dhr_t\right)=-y\dhr_y.$ Since $\frac{\dsf }{\dsf t}\gamma'\ne 0,$ $\{p\}\times\rr$ is not a geodesic in $\sol$.

\begin{md}\label{pro2.1}
Let $\gamma$ be a curve in $\hh^2$. Then the mean curvature vector of $\gamma\times\rr$ in $\sol$ is
$$\vec{H}_{\gamma\times\rr}=y^2\vec{\kappa}_\euc,$$
where $\vec{\kappa}_\euc$ is Euclidean mean curvature vector of $\gamma$ in $\hh^2$.
\end{md}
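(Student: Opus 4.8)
The plan is to compute the mean curvature vector directly from the connection coefficients of $\bn$ listed above, by adapting an orthonormal frame to the surface $\Sigma:=\gamma\times\rr$ and tracing the second fundamental form $\II$. First I would parametrize $\gamma$ by Euclidean arc length $s$, so $\gamma(s)=(x(s),y(s))$ with $(x')^2+(y')^2=1$, and parametrize $\Sigma$ by $(s,t)\mapsto(x(s),y(s),t)$. Since $\td{\dhr_x}=\td{\dhr_y}=1/y$ and $\td{\dhr_t}=y$, the fields
$$e_1:=y\,(x'\dhr_x+y'\dhr_y),\qquad e_2:=\tfrac1y\dhr_t$$
form an orthonormal frame tangent to $\Sigma$, while
$$N:=y\,(-y'\dhr_x+x'\dhr_y)$$
is a unit normal. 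The observation to keep in mind throughout is that $N=y\,n_\euc$ and, more generally, that $\dhr_x,\dhr_y$ have Euclidean norm $1$ but $\sol$-norm $1/y$; this factor of $y$ is exactly what converts the $\sol$ data into the Euclidean curvature at the end.

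Next I would compute the two normal components $\vh{\bn_{e_i}e_i,N}$, whose sum is $\vh{\vec H_{\gamma\times\rr},N}$. For $e_2$ the computation is short: since $1/y$ is independent of $t$, the identity $\bn_{\dhr_t}\dhr_t=-y^3\dhr_y$ gives $\bn_{e_2}e_2=-y\dhr_y$, whence $\vh{\bn_{e_2}e_2,N}=-x'$. For $e_1$ I would write $e_1=y\,v$ with $v:=x'\dhr_x+y'\dhr_y$ and use $\bn_{e_1}e_1=y\,(v\cdot y)\,v+y^2\bn_v v=yy'\,v+y^2\bn_v v$. Because $\vh{v,N}=0$, only the $\bn_v v$ term survives against $N$; expanding $\bn_v v$ with the identities for $\bn_{\dhr_x}\dhr_x$, $\bn_{\dhr_x}\dhr_y=\bn_{\dhr_y}\dhr_x$ and $\bn_{\dhr_y}\dhr_y$, collecting terms, and using $(x')^2+(y')^2=1$, I expect
$$\vh{\bn_{e_1}e_1,N}=y\,(x'y''-x''y')+x'.$$

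Adding the two contributions, the $\pm x'$ terms cancel and I obtain $\vh{\vec H_{\gamma\times\rr},N}=y\,(x'y''-x''y')$. Finally I would recognize $x'y''-x''y'$ as the signed Euclidean curvature $\kappa_\euc$ of $\gamma$ in the arc-length parametrization, so that $\vec\kappa_\euc=\kappa_\euc\,n_\euc$; combining this with $N=y\,n_\euc$ gives $\vec H_{\gamma\times\rr}=y\,\kappa_\euc\,N=y^2\kappa_\euc\,n_\euc=y^2\vec\kappa_\euc$, as claimed. The main obstacle is the middle step: correctly differentiating the position-dependent factor $y$ inside $e_1=y\,v$ and handling the non-diagonal connection coefficients, all while carefully tracking the conformal factor $y$ that distinguishes Euclidean from $\sol$ lengths — an error there would spoil both the clean cancellation of the $x'$ terms and the final identification of $\kappa_\euc$.
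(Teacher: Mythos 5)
Your proof is correct, but it takes a genuinely different route from the paper's. The paper works parametrization-free: it takes $\gamma$ with \emph{hyperbolic} unit speed, writes $\vec H_{\gamma\times\rr}=\bigl(\bn_{\frac1y\dhr_t}\bigl(\tfrac1y\dhr_t\bigr)+\bn_{\gamma'}\gamma'\bigr)^\bot=-y\dhr_y^\bot+\vec\kappa$ with $\vec\kappa$ the \emph{hyperbolic} curvature vector, and then invokes the conformal change of connection (via Koszul's formula),
\begin{equation*}
\left(\nabla_\euc\right)_XY=\nabla_XY+\frac{1}{y}\Ngoac{(Xy)Y+(Yy)X-\vh{X,Y}\nabla y},
\end{equation*}
to obtain the identity $y^2\vec\kappa_\euc=\vec\kappa-y\dhr_y^\bot$, from which the proposition is immediate. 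You instead take \emph{Euclidean} arc length, compute both components $\vh{\bn_{e_i}e_i,N}$ directly from the Christoffel-type identities for $\bn$ listed in Section 2, and identify $x'y''-x''y'$ as the signed Euclidean curvature; your intermediate formulas check out (in particular $\bn_{e_2}e_2=-y\dhr_y$ agrees with the paper's first step, and the stated value $\vh{\bn_{e_1}e_1,N}=y(x'y''-x''y')+x'$ is exactly what the coordinate expansion of $\bn_v v$ gives, using $(x')^2+(y')^2=1$). The trade-off: the paper's argument isolates the term $-y\dhr_y^\bot$ as precisely the conformal correction between the hyperbolic and Euclidean connections, which explains structurally where the factor $y^2$ comes from and yields the reusable relation $y^2\vec\kappa_\euc=\vec\kappa-y\dhr_y^\bot$; your computation is more elementary and self-contained (it needs nothing beyond the connection coefficients already tabulated), at the cost of being tied to a coordinate parametrization and to the sign conventions for the plane-curvature formula, which you do handle consistently since your $N=y\,n_\euc$ uses the same rotation convention as $\kappa_\euc=x'y''-x''y'$.
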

\begin{proof} We first compute $\vec{H}_{\gamma\times\rr}$.
Without loss of generality we can assume that $\gamma$ is a unit speed curve. So $\left\{\frac{1}{y}\dhr_t,\gamma'\right\}$ is an orthonormal frame of $\gamma\times\rr$. The mean curvature vector of $\gamma\times\rr$ is by definition
\begin{align}\label{HgR}
\vec{H}_{\gamma\times\rr}&=\Ngoac{\bn_{\frac{1}{y}\dhr_t}\left(\frac{1}{y}\dhr_t\right)+\bn_{\gamma'}\gamma'}^\bot\\
&=\Ngoac{-y\dhr_y+\bn_{\gamma'}\gamma'}^\bot\notag\\
&=-y\dhr_y^\bot+\vec{\kappa},\notag
\end{align}
where $\vec{\kappa}$ is the mean curvature vector of $\gamma$ in $\hh^2$.

We now compute the Euclidean mean curvature vector $\vec{\kappa}_\euc$ of $\gamma$ in $\hh^2$.
By Koszul's formula \eqref{equKoszul}
\begin{equation}\label{equEH}
\left(\nabla_\euc\right)_XY=\nabla_XY+\frac{1}{y}\Ngoac{(Xy)Y+(Yy)X-\vh{X,Y}\nabla y}
\end{equation}
where $\nabla_\euc$ is the Riemannian connexion of $\hh^2$ with respect to the Euclidean metric and $X,Y$ are tangent vector fields of $\hh^2$. Hence
\begin{equation}\label{equEH1}
\Ngoac{\left(\nabla_\euc\right)_XY}^\bot=\Ngoac{\nabla_XY}^\bot-\frac{1}{y}\vh{X,Y}\Ngoac{\nabla y}^\bot
\end{equation}
where $X,Y$ are tangent vector fields of $\gamma$. 
Since $\gamma$ is a unit speed curvature, $\td{\gamma'}=1$ and $\td{\frac{\gamma'}{y}}_\euc=1$. By \eqref{equEH1}
\begin{align*}
\vec{\kappa}_\euc&=\Ngoac{\Ngoac{\nabla_\euc}_{\frac{\gamma'}{y}}\frac{\gamma'}{y}}^\bot\\
&=\Ngoac{\nabla_{\frac{\gamma'}{y}}\frac{\gamma'}{y}}^\bot-\frac{1}{y}\vh{\frac{\gamma'}{y},\frac{\gamma'}{y}}\Ngoac{\nabla y}^\bot\\
&=\frac{1}{y^2}\vec{\kappa}-\frac{1}{y}\dhr_y^\bot
\end{align*}
Hence
$$y^2\vec{\kappa}_\euc=\vec{\kappa}-y\dhr_y^\bot.$$
Combining this equality with \eqref{HgR}, we complete the proof.
\end{proof}

Let us mention two important consequences of the proposition.

\begin{hq}\label{cor2.2}
Let $\gamma$ be a curve in $\hh^2$ and $\Omega$ be a domain in $\hh^2$ with $\dhr\Omega\in C^2$. Then 
\begin{enumerate}
\item $\gamma\times\rr$ is a minimal surface in $\sol$ if and only if $\gamma$ is an Euclidean geodesic in $\hh^2$. However, these Euclidean geodesics need not have constant speed parametrization.
\item $\Omega\times\rr$ is a mean convex set in $\sol$ if and only if $\Omega$ is an Euclidean mean convex in $\hh^2$.
\end{enumerate}
\end{hq}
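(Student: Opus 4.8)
The plan is to derive both statements directly from the mean curvature identity $\vec{H}_{\gamma\times\rr}=y^2\vec{\kappa}_\euc$ established in Proposition \ref{pro2.1}, exploiting that $y^2>0$ everywhere on $\hh^2$, so that multiplication by the scalar $y^2$ neither creates nor destroys zeros and preserves the direction of the (horizontal) vectors involved.

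For part (1), I would recall that a surface is minimal precisely when its mean curvature vector vanishes identically. Applying the Proposition, $\gamma\times\rr$ is minimal if and only if $y^2\vec{\kappa}_\euc=0$ at every point; since the factor $y^2$ is strictly positive, this holds if and only if $\vec{\kappa}_\euc\equiv 0$, i.e. the normal component of the Euclidean acceleration of $\gamma$ vanishes. A regular curve with vanishing Euclidean normal curvature is a straight Euclidean segment, which is exactly a Euclidean geodesic of $\hh^2$. For the parenthetical remark I would note that $\vec{\kappa}_\euc$ is the component of $\gamma''$ orthogonal to $\gamma'$, so for any straight line this normal part is zero for every regular parametrization, even when $\gamma$ does not have constant Euclidean speed; this is why arbitrary (non unit speed) parametrizations of the Euclidean geodesic are admissible.

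For part (2), I would apply the Proposition with $\gamma=\dhr\Omega$. By definition $\Omega\times\rr$ is mean convex if and only if its boundary $\dhr\Omega\times\rr$ has mean curvature vector pointing into $\Omega\times\rr$ (equivalently, nonnegative mean curvature for the inner normal), and likewise $\Omega$ is Euclidean mean convex if and only if $\dhr\Omega$ has $\vec{\kappa}_\euc$ pointing into $\Omega$. The key point is that the inner unit normal of $\Omega\times\rr$ along $\dhr\Omega\times\rr$ is the horizontal lift of the inner normal of $\Omega$ in $\hh^2$, because $\dhr_t$ is tangent to $\dhr\Omega\times\rr$ and orthogonal to $\hh^2$; hence a vector is inward pointing for $\Omega\times\rr$ if and only if its $\hh^2$-component is inward pointing for $\Omega$. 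Since $\vec{H}_{\dhr\Omega\times\rr}=y^2\vec{\kappa}_\euc$ with $y^2>0$, the two mean curvature vectors are positive multiples of one another, so one points inward if and only if the other does, which gives the claimed equivalence.

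There is no serious obstacle here, as this is a formal consequence of Proposition \ref{pro2.1}. The only points requiring care are the parametrization independence of the vanishing of $\vec{\kappa}_\euc$ in (1) and the correspondence of inner normals between $\Omega\times\rr$ and $\Omega$ in (2); both are handled by the single observation that the warping rescales vectors only by the positive factor $y^2$, leaving their directions, and in particular their $\hh^2$-components, intact.
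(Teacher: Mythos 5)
Your proposal is correct and is precisely the argument the paper intends: the corollary is stated as an immediate consequence of Proposition \ref{pro2.1}, and your deduction (minimality of $\gamma\times\rr$ is equivalent to the vanishing of $y^2\vec{\kappa}_\euc$, hence of $\vec{\kappa}_\euc$ since $y^2>0$, and likewise the sign/direction of the mean curvature vector of $\dhr\Omega\times\rr$ matches that of $\vec{\kappa}_\euc$ under the positive factor $y^2$) is exactly the intended reading. Your added remarks on parametrization independence of $\vec{\kappa}_\euc$ and on the identification of inner normals are correct refinements of details the paper leaves implicit.
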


\begin{md}\label{pro2.3}
Let $\gamma$ be a curve in $\hh^2$. Then the area calculated in $\sol$ of $\gamma\times[0,1]$ is
$$\Area(\gamma\times [0,1])=\ell_\euc(\gamma),$$
where $\ell_\euc(\gamma)$ is the Euclidean length of $\gamma$.
\end{md}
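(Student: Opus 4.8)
The plan is to parametrize the surface explicitly and compute its area element directly from the metric \eqref{equ2.2}; the whole content of the statement will lie in a single cancellation between the conformal factor $1/y^2$ of the hyperbolic part and the warping factor $y^2$ in the $\dhr_t$-direction.

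First I would write $\gamma$ in coordinates as $\gamma(s)=(x(s),y(s))$, with $s$ ranging over some interval $I$ (no unit-speed assumption is needed), so that $\gamma\times[0,1]$ is the image of the immersion $\Phi(s,t)=(x(s),y(s),t)$, $(s,t)\in I\times[0,1]$. The coordinate tangent fields are $\Phi_s=x'\dhr_x+y'\dhr_y$ and $\Phi_t=\dhr_t$.

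Next I would read off the coefficients of the first fundamental form from \eqref{equ2.2}. Since $\vh{\dhr_x,\dhr_t}=\vh{\dhr_y,\dhr_t}=0$, the mixed coefficient $F:=\vh{\Phi_s,\Phi_t}$ vanishes, while
$$E:=\vh{\Phi_s,\Phi_s}=\frac{(x')^2+(y')^2}{y^2},\qquad G:=\vh{\Phi_t,\Phi_t}=y^2.$$
The area element is $\sqrt{EG-F^2}\,\dsf s\,\dsf t$, and the key observation is that $EG=(x')^2+(y')^2$: the factor $y^2$ coming from $G$ exactly cancels the $1/y^2$ appearing in $E$, leaving the squared \emph{Euclidean} speed of $\gamma$.

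Finally I would integrate, using that the integrand is independent of $t$:
$$\Area(\gamma\times[0,1])=\int_0^1\!\!\int_I\sqrt{(x')^2+(y')^2}\;\dsf s\,\dsf t=\int_I\sqrt{(x')^2+(y')^2}\;\dsf s=\ell_\euc(\gamma).$$
I do not expect any genuine obstacle here: the computation is short and elementary. The only point worth emphasizing — and the conceptual reason the proposition is stated at all — is precisely the cancellation of the warping factor, which replaces the hyperbolic length (the natural quantity in $M^2\times\rr$) by the Euclidean length in $\sol$, in the same spirit as Corollary \ref{cor2.2}.
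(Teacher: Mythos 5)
Your proof is correct and follows essentially the same route as the paper: both parametrize $\gamma\times[0,1]$, compute the induced area element, and integrate, with the only cosmetic difference being that the paper writes the area element as $\td{\gamma'}\,y$ (hyperbolic speed times the warping factor) and then identifies $\int_\gamma y\,\dsf s$ with $\ell_\euc(\gamma)$, whereas you cancel the factors $y^2$ and $1/y^2$ directly in $\sqrt{EG-F^2}$ to obtain the Euclidean speed immediately. The two computations are identical in substance.
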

\begin{proof}
Let us first compute the area of $\gamma\times[0,1]$.
The surface $\gamma\times[0,1]$ in $\sol$ is defined by
$$\gamma\times[0,1]:[0,1]\times[0,1]\to\sol,\qquad (t_1,t_2)\mapsto(\gamma(t_1),t_2).$$
We have by definition
\begin{align*}
\Area(\gamma\times[0,1])&=\int_{[0,1]\times [0,1]}\td{(\gamma\times[0,1])_{t_1}\times (\gamma\times[0,1])_{t_2}}\,\dsf t_1\,\dsf t_2\\
&=\int_{0}^{1}\int_{0}^{1}\td{\gamma'(t_1)}y(\gamma(t_1))\,\dsf t_1\,\dsf t_2\\
&=\int_{0}^{1}\td{\gamma'(t_1)}y(\gamma(t_1))\,\dsf t_1=\int_{\gamma}y\,\dsf s.
\end{align*}
The Euclidean length of $\gamma$ is by definition
\begin{equation*}
\ell_\euc(\gamma)=\int_{\gamma}\dsf s_{\euc}=\int_{\gamma}y\,\dsf s.
\end{equation*}
Combining these equalities we conclude that
\begin{equation*}
\Area(\gamma\times[0,1])=\int_{\gamma}y\,\dsf s=\ell_\euc(\gamma).
\end{equation*}
This establishes the formula.
\end{proof}

The ideal boundary of $\hh^2$ is by definition
$$\dhr_\infty\hh^2=\{(x,y)\in\rr^2: y=0\}\cup\{\infty\}.$$
The point $\infty$ of $\dhr_\infty\hh^2$ is specified in our model of $\sol$  and we make the distinction with points in $\{y=0\}$.
\begin{dn}\label{aff}
A point $p\in\dhr_\infty\hh^2$ is called \textit{removable}  (resp. \textit{essential})
if $p\in \{(x,y)\in\rr^2: y=0\}$ (resp. $p=\infty$).
\end{dn}

\subsection{The minimal surface equations}

Let $\Omega$ be a domain in $\hh^2$ and $u$ be a $C^2$-function on $\Omega$. Using the previous model for $\hh^2$, we can consider the surface $\Gr(u)$ in $\sol$ parametrized by
$$(x,y)\mapsto (x,y,u(x,y)).$$
Such a surface is called the vertical Killing graph of $u$, it is transverse to the Killing vector field $\dhr_t$ and any integral curve of $\dhr_t$ intersect at most once the surface.
The upward unit normal to $\Gr(u)$ is given by
\begin{equation}\label{equ2.12}
N=N_u=\dfrac{-y\nabla u+\frac{1}{y}\dhr_t}{\sqrt{1+y^2\td{\nabla u}^2}},
\end{equation}
where $\nabla$ is the hyperbolic gradient operator and $\td{-}$ is the hyperbolic norm.
Indeed,  $\Gr(u)=\Phi^{-1}(0),$ where the function $\Phi:\sol\to\rr$ is defined by $\Phi(x,y,t)=t-u(x,y)$. So, $\bn\Phi$ is a normal vector field to $\Gr(u)$. Moreover, since $\bn t=\dfrac{1}{y^2}\dhr_t$ and $\vh{\bn u, \dhr_t}=0$, we have
$$\bn\Phi=\bn t-\bn u=\frac{1}{y^2}\dhr_t-\nabla u,\qquad \td{\bn\Phi}^2=\frac{1}{y^2}+\td{\nabla u}^2.$$
This establishes the formula \eqref{equ2.12}.

Denote
\begin{equation}\label{equ2.18}
W=W_u:=\sqrt{1+y^2\td{\nabla u}^2},\qquad X_u:=\dfrac{y\nabla u}{W}.
\end{equation}
It follows that
\begin{equation}
N=-X_u+\dfrac{1}{yW}\dhr_t.
\end{equation}
In the sequel, we will use this unit normal vector to compute the mean curvature of a Killing graph.

\begin{md}\label{equ-minimale} 
Let $\Omega$ be a domain in $\hh^2$ and $u$ be a $C^2$-function on $\Omega$. The mean curvature $H$ of the Killing graph of $u$ satisfies:
\begin{equation}\label{H1}
2yH=\Div\left(\dfrac{y^2\nabla u}{W}\right),
\end{equation}
with $\Div$ the  divergence operator in the hyperbolic metric, and after expanding all terms:
\begin{equation}\label{H1'}
2H=\dfrac{y^3}{W^3}\left(\left(1+y^4u_y^2\right)u_{xx}-2y^4u_xu_yu_{xy}+\left(1+y^4u_x^2\right)u_{yy}+2\frac{u_y}{y}\right).
\end{equation}
\end{md}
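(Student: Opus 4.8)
The plan is to obtain \eqref{H1} from the general fact that, for a two‑sided hypersurface carrying a unit normal field $N$ that extends to an ambient neighbourhood, the mean curvature satisfies $2H=-\Div_{\sol}(N)$, and then to deduce \eqref{H1'} by a direct expansion. Here no arbitrary extension is needed, since $N$ is already globally defined on $\sol$ through $N=\bn\Phi/\td{\bn\Phi}$ with $\Phi(x,y,t)=t-u(x,y)$; crucially, $N$ is independent of $t$. To justify the divergence formula I would take an orthonormal frame $\{e_1,e_2,N\}$ adapted to $\Gr(u)$ and write $2H=\sum_i\vh{\bn_{e_i}e_i,N}=-\sum_i\vh{e_i,\bn_{e_i}N}$, using $\vh{e_i,N}\equiv 0$ along the surface. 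Because $\td{N}\equiv 1$ forces $\vh{\bn_N N,N}=0$, the one term missing from the full ambient divergence vanishes, so $\sum_i\vh{e_i,\bn_{e_i}N}=\Div_{\sol}(N)$ on $\Gr(u)$, giving $2H=-\Div_{\sol}(N)$ (with the sign fixed by the upward choice of $N$).

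The next step is a coordinate computation of $\Div_{\sol}(N)$. From \eqref{equ2.2} the metric is $\mathrm{diag}(1/y^2,1/y^2,y^2)$, so $\sqrt{\det g_{\sol}}=1/y$ and $\Div_{\sol}(V)=y\,\dhr_\alpha\!\big(\tfrac1y V^\alpha\big)$. Writing $N$ from \eqref{equ2.12} in the coordinate basis, its $t$-component is $\tfrac{1}{yW}$, which is $t$-independent and hence contributes nothing, while its horizontal part is $-X_u=-\tfrac{y}{W}\nabla u=-\tfrac{y^3}{W}\big(u_x\dhr_x+u_y\dhr_y\big)$, using $\nabla u=y^2\big(u_x\dhr_x+u_y\dhr_y\big)$ in the hyperbolic metric. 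A short computation then yields $\Div_{\sol}(N)=-y\big[\dhr_x(\tfrac{y^2u_x}{W})+\dhr_y(\tfrac{y^2u_y}{W})\big]$. Recognising that in the hyperbolic metric ($\sqrt{\det g_{\hh^2}}=1/y^2$) one has $\Div\!\big(\tfrac{y^2\nabla u}{W}\big)=y^2\big[\dhr_x(\tfrac{y^2u_x}{W})+\dhr_y(\tfrac{y^2u_y}{W})\big]$, I conclude $\Div_{\sol}(N)=-\tfrac1y\Div\!\big(\tfrac{y^2\nabla u}{W}\big)$, whence $2yH=\Div\!\big(\tfrac{y^2\nabla u}{W}\big)$, which is \eqref{H1}.

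Finally, \eqref{H1'} follows by expanding \eqref{H1}, i.e. by computing $2H=y\big[\dhr_x(\tfrac{y^2u_x}{W})+\dhr_y(\tfrac{y^2u_y}{W})\big]$ explicitly. Using $W=\sqrt{1+y^4(u_x^2+u_y^2)}$ I would compute $W_x=\tfrac{y^4(u_xu_{xx}+u_yu_{xy})}{W}$ and $W_y=\tfrac{2y^3(u_x^2+u_y^2)+y^4(u_xu_{xy}+u_yu_{yy})}{W}$, substitute, and collect everything over the common denominator $W^3$. The only genuinely non-formal point is tracking the inhomogeneous contribution: the derivative $\dhr_y(y^2)$ produces the term $2u_y/y$ in \eqref{H1'}, and the $y$-dependence of $W$ must be handled so that the remaining first-order terms cancel correctly. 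I expect the main obstacle to be purely this bookkeeping in the expansion, together with pinning down the sign convention $2H=-\Div_{\sol}(N)$ consistently with the upward $N$; the limit $W\to 1$ serves as a useful sanity check, since \eqref{H1'} then reduces to $y^3(u_{xx}+u_{yy})+2y^2u_y$, matching the direct evaluation of $y\big[\dhr_x(y^2u_x)+\dhr_y(y^2u_y)\big]$.
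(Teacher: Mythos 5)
Your proof is correct, and it shares its skeleton with the paper's: both extend the upward normal $N$ of \eqref{equ2.12} $t$-invariantly to $\Omega\times\rr$, reduce the mean curvature to an ambient divergence via $2H=\Div_{\sol}(-N)$ (legitimate because $\vh{\bn_N N,N}=0$ for a unit field), and finish with the same coordinate expansion of \eqref{H1} into \eqref{H1'}, whose cancellations work out exactly as you describe. The genuine difference is how $\Div_{\sol}(N)$ is evaluated. The paper does it invariantly: it splits $N=-X_u+\frac{1}{yW}\dhr_t$, notes the vertical part is divergence-free because $\dhr_t$ is Killing and $\frac{1}{yW}$ is $t$-independent, and then uses the connection identity $\bn_{\dhr_t}\dhr_t=-y^3\dhr_y=-y\nabla y$ to convert the ambient divergence into the hyperbolic quantity $\Div_{\sol}(X_u)=\Div(X_u)+\frac{1}{y}\vh{X_u,\nabla y}=\frac{1}{y}\Div(yX_u)$. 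You instead compute in coordinates with $\Div_{\sol}V=y\,\dhr_\alpha\!\left(y^{-1}V^\alpha\right)$, valid since $\sqrt{\det g_{\sol}}=1/y$, drop the $t$-component by $t$-independence, and then recognize the hyperbolic divergence through its own coordinate formula $\Div V=y^2\,\dhr_\alpha\!\left(y^{-2}V^\alpha\right)$. Your route is more elementary, needing no Koszul computations beyond the general surface-divergence identity, though it leans on the fact that both metrics are diagonal in the same chart; the paper's invariant route has the advantage of exhibiting the geometric origin of the weighted operator $\Div(yX_u)$ (the weight $y=\td{\dhr_t}$ is the norm of the Killing field), which is precisely the form exploited later in the flux formula of Section \ref{sec4} and in the maximum principles. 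Both arguments are complete and consistent in sign, so your proposal stands as a valid alternative proof.
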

\begin{proof}
We extend the vector field $N$ to the whole $\Omega\times\rr$ by using the expression given in (\ref{equ2.12}). The mean curvature of the Killing graph $\Gr(u)$ of $u$ is then given by $2H=\Div_{\Gr(u)}(-N).$ 

Since $\dhr_t$ is a Killing vector field, we have
\begin{equation*}
2H=\Div_{\sol}(-N)=\Div_\sol (X_u)-\Div_\sol\left(\dfrac{1}{y W}\dhr_t\right).
\end{equation*}
Let us compute
$$\Div_{\sol}\left(\dfrac{1}{y W}\dhr_t\right)=\vh{\bn\dfrac{1}{y W},\dhr_t}+\dfrac{1}{y W}\Div_{\sol}\left(\dhr_t\right)=0+0=0,$$
$$\Div_\sol (X_u)=\Div(X_u)+\vh{\bn_{\frac{1}{y}\dhr_t}X_u,\frac{1}{y}\dhr_t}. $$
Moreover
$$\vh{\bn_{\frac{1}{y}\dhr_t}X_u,\frac{1}{y}\dhr_t}=\dfrac{1}{y^2}\vh{\bn_{\dhr_t}X_u,\dhr_t}=-\dfrac{1}{y^2}\vh{X_u,\bn_{\dhr_t}\dhr_t},$$
$$\bn_{\dhr_t}\dhr_t=-y^3\dhr_y=-y\nabla y.$$
Combining these equalities we deduce that
$$2H=\Div(X_u)+\dfrac{1}{y}\vh{X_u,\nabla y}.$$
It follows that
$$2yH=y\Div(X_u)+\vh{X_u,\nabla y}=\Div(yX_u)=\Div\left(\dfrac{y^2\nabla u}{W}\right).$$
This is the formula (\ref{H1}). Expanding (\ref{H1}) yields
\begin{align*}
2H&=\frac{1}{y}\Div\left(\dfrac{y^2\nabla u}{W}\right)=\frac{1}{y}\Div\left(\dfrac{y^4u_x}{W}\dhr_x+\dfrac{y^4u_y}{W}\dhr_y\right)\\
&=\frac{1}{y}\cdot y^2\left(\dfrac{\dhr}{\dhr x}\left(\dfrac{1}{y^2}\dfrac{y^4u_x}{W}\right) +\dfrac{\dhr}{\dhr_y}\left(\dfrac{1}{y^2}\dfrac{y^4u_y}{W}\right)\right)\\
&=\dfrac{y^3}{W^3}\left(\left(1+y^4u_y^2\right)u_{xx}-2y^4u_xu_yu_{xy}+\left(1+y^4u_x^2\right)u_{yy}+2\frac{u_y}{y}\right).
\end{align*}
This completes the proof.
\end{proof}

Thus the minimal surface equation for a function $u$ can be written
\begin{equation}\label{MSE}
\Mcal u:=\Div(yX_u)=0,
\end{equation}
\begin{equation}
\left(1+y^4u_y^2\right)u_{xx}-2y^4u_xu_yu_{xy}+\left(1+y^4u_x^2\right)u_{yy}+2\frac{u_y}{y}=0.
\end{equation}
A function $u\in C^2(\Omega)$ is said to be a \textit{minimal solution} on $\Omega$ mean that $u$  satisfies $\Mcal u=0$ on this domain.


\section{Maximum principle, Gradient estimate and Existence theorem}\label{sec3}

\subsection{Maximum principle}
A basic tool for obtaining the results of this work is the maximum principle for differences of minimal solutions.

Firstly, by applying the proof of
\cite[Theorem 10.1]{GT01} we have
\begin{md}[Maximum principle]\label{PM}
Let $u_1,u_2$ be two $C^2$-functions on a domain $\Omega\subset\hh^2$. Suppose $u_1$ and $u_2$ satisfy $\Mcal u_1\ge \Mcal u_2$. Then $u_2-u_1 $ cannot have an interior minimum unless $u_2-u_1$ is a constant.
\end{md}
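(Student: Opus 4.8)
The plan is to prove this maximum principle by reducing the difference of the two minimal-surface operators to a second-order linear elliptic operator without zeroth-order term, then invoking the classical strong maximum principle (Hopf) as in \cite[Theorem 10.1]{GT01}. The key observation is that $\Mcal$ has the divergence form $\Mcal u=\Div(yX_u)$, where $X_u=y\nabla u/W_u$ depends only on $\nabla u$ (not on $u$ itself), so the operator involves no undifferentiated $u$. This is exactly the structural feature that makes the comparison argument work.

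First I would set $w:=u_2-u_1$ and write the difference $\Mcal u_2-\Mcal u_1$ as an integral of a derivative along the segment joining $\nabla u_1$ to $\nabla u_2$. Concretely, introduce $F(p,\xi):=y\,\dfrac{y\xi}{\sqrt{1+y^2|\xi|^2}}$ so that $yX_u=F(p,\nabla u)$, and use the fundamental theorem of calculus,
\begin{equation*}
F(p,\nabla u_2)-F(p,\nabla u_1)=\left(\int_0^1 D_\xi F\bigl(p,\,s\nabla u_2+(1-s)\nabla u_1\bigr)\,\dsf s\right)\nabla w=:\mathcal{A}(p)\,\nabla w.
\end{equation*}
Taking the divergence gives
\begin{equation*}
\Mcal u_2-\Mcal u_1=\Div\bigl(\mathcal{A}(p)\,\nabla w\bigr)=\sum_{i,j}a_{ij}(p)\,\partial_i\partial_j w+\sum_i b_i(p)\,\partial_i w=:Lw,
\end{equation*}
a linear operator in $w$ with no zeroth-order term, where the coefficient matrix $(a_{ij})$ comes from $\mathcal{A}(p)$ and the first-order coefficients $b_i$ collect the terms produced when the divergence falls on $\mathcal{A}(p)$.

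The central step is to verify that $L$ is \emph{elliptic}, i.e.\ that $\mathcal{A}(p)$ is positive definite. This follows because $D_\xi F(p,\xi)$ is, up to the positive factor $y^2$, the Hessian in $\xi$ of the strictly convex integrand $\sqrt{1+y^2|\xi|^2}$; averaging positive-definite matrices over $s\in[0,1]$ keeps positive definiteness, so $\mathcal{A}(p)\succ 0$ at every interior point (the ellipticity degenerates only as $|\nabla u_i|\to\infty$, which does not occur on the interior for $C^2$ functions). The hypothesis $\Mcal u_1\ge\Mcal u_2$ translates into $Lw\le 0$, i.e.\ $w$ is a supersolution of an elliptic operator with no zeroth-order term. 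By the strong maximum principle for such operators (the Hopf maximum principle, exactly \cite[Theorem 10.1]{GT01}), $w=u_2-u_1$ cannot attain an interior minimum unless it is constant, which is the assertion.

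I expect the \textbf{main obstacle} to be bookkeeping rather than conceptual: one must confirm that the first-order coefficients $b_i$ and the entries $a_{ij}$ are genuinely bounded and continuous on the interior so that \cite[Theorem 10.1]{GT01} applies verbatim, and that the warping factor $y$ (which vanishes only on $\dhr_\infty\hh^2$, outside $\Omega$) never spoils ellipticity on compact interior subsets. Since the coefficients depend continuously on $p$ and on the $C^1$ data $\nabla u_1,\nabla u_2$, these are routine once the divergence-form linearization above is in hand; the only real content is the ellipticity of $\mathcal{A}(p)$, which is inherited from the strict convexity of the area integrand.
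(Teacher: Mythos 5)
Your proposal is correct and coincides with the paper's own argument: the paper proves Proposition \ref{PM} simply by invoking the proof of \cite[Theorem 10.1]{GT01}, which is precisely the linearization you carry out — writing $\Mcal u_2-\Mcal u_1$ as a linear elliptic operator with no zeroth-order term acting on $w=u_2-u_1$ (possible because the coefficients of $\Mcal$ depend only on $\nabla u$, not on $u$) and then applying the classical strong maximum principle. Your verification of ellipticity via the strict convexity of $\xi\mapsto\sqrt{1+y^2|\xi|^2}$, together with the remark that $y$ stays bounded away from zero on compact interior subsets, fills in exactly the details the paper leaves to the citation.
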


It follows from this proposition that  \textit{if $u_1,u_2$ are  functions of class $C^2$ on a bounded domain $\Omega\subset\hh^2$ such that $\Mcal u_1\ge \Mcal u_2$, and $\liminf\limits(u_2-u_1)\ge 0$ for any approach to the boundary $\dhr\Omega$ of $\Omega$, then we have $u_2 \ge u_1$ in $\Omega$}.

Indeed, assume the contrary that $\{x\in\Omega: u_2(x)<u_1(x)\}$ is not empty. Since $\liminf\limits(u_2-u_1)\ge 0$ for any approach to the boundary $\dhr\Omega$ and $\Omega$ is bounded, $u_2-u_1$ has an interior minimum in $\Omega$. By Proposition \ref{PM}, $u_2-u_1$ is constant, a contradiction.

The following result (Theorem \ref{PMG}) is a remarkable strengthening of this situation.

In what follows, for a subset $\Omega$ of $\hh^2$, we will denote by $\dhr_\infty\Omega$ the boundary of $\Omega$ in $\hh^2\cup\dhr_\infty\hh^2$.

\begin{dn}
A domain $\Omega\subset\hh^2$ is called \textit{admissible} if its  boundary $\dhr_\infty\Omega$ is composed of a finite number of open Euclidean convex arcs $C_i$ in $\hh^2$ together with their endpoints. The endpoints of the arcs $C_i$ are called vertices of $\Omega$ and those in $\dhr_\infty\hh^2$ are called ideal vertices of $\Omega$. Assume in addition that, the ideal vertices of this domain are  removable points at infinity (see Figure \ref{F1}).
\end{dn}


\begin{figure}[!h]
\centering

\definecolor{xdxdff}{rgb}{0.49,0.49,1}
\definecolor{qqqqff}{rgb}{0,0,1}
\begin{tikzpicture}[line cap=round,line join=round,>=triangle 45,x=1.0cm,y=1.0cm]
\clip(-2.62,-1.2) rectangle (7.38,5.56);
\draw (1.36,-0.48)-- (-0.64,1.52);
\draw [shift={(4.02,1.25)}] plot[domain=-1.37:1.4,variable=\t]({1*1.76*cos(\t r)+0*1.76*sin(\t r)},{0*1.76*cos(\t r)+1*1.76*sin(\t r)});
\draw [shift={(2.74,2.69)}] plot[domain=0.18:2.6,variable=\t]({1*1.61*cos(\t r)+0*1.61*sin(\t r)},{0*1.61*cos(\t r)+1*1.61*sin(\t r)});
\draw [shift={(0.9,1.52)}] plot[domain=4.94:6.06,variable=\t]({1*2.05*cos(\t r)+0*2.05*sin(\t r)},{0*2.05*cos(\t r)+1*2.05*sin(\t r)});
\draw [shift={(0.13,2.69)}] plot[domain=-0.35:0.59,variable=\t]({1*1.48*cos(\t r)+0*1.48*sin(\t r)},{0*1.48*cos(\t r)+1*1.48*sin(\t r)});
\draw (1.52,2.18)-- (2.9,3.04);
\draw [shift={(4.99,4.05)}] plot[domain=4.34:4.8,variable=\t]({1*2.52*cos(\t r)+0*2.52*sin(\t r)},{0*2.52*cos(\t r)+1*2.52*sin(\t r)});
\draw [shift={(5.66,0.64)}] plot[domain=2.02:3.08,variable=\t]({1*1*cos(\t r)+0*1*sin(\t r)},{0*1*cos(\t r)+1*1*sin(\t r)});
\draw (1.36,-0.48)-- (2.18,1);
\draw [shift={(-0.33,0.24)}] plot[domain=-0.4:0.66,variable=\t]({1*1.83*cos(\t r)+0*1.83*sin(\t r)},{0*1.83*cos(\t r)+1*1.83*sin(\t r)});
\draw [->,dash pattern=on 2pt off 2pt] (-2.64,-0.48) -- (7.36,-0.48);
\draw (5.78,5.28) node[anchor=north west] {$\mathbb{H}^2$};
\draw (2.84,2.3) node[anchor=north west] {$\Omega$};
\draw (-0.1,0.7) node[anchor=north west] {$C_1$};
\draw (-0.76,3.6) node[anchor=north west] {$C_2$};
\draw (1.05,3.2) node[anchor=north west] {$C_3$};
\draw (2,2.68) node[anchor=north west] {$C_4$};
\draw (1.74,3.5) node[anchor=north west] {$C_5$};
\draw (3.42,4.7) node[anchor=north west] {$C_6$};
\draw (5.86,1.68) node[anchor=north west] {$C_7$};
\draw (3.22,0.5) node[anchor=north west] {$C_8$};
\draw (2.22,0.26) node[anchor=north west] {$C_9$};
\draw (1.74,0.66) node[anchor=north west] {$C_{10}$};
\draw (0.8,0.9) node[anchor=north west] {$C_{11}$};
\draw (1.32,1.5) node[anchor=north west] {$C_{12}$};
\draw (4.38,2.02) node[anchor=north west] {$C_{13}$};
\draw (3.8,1.5) node[anchor=north west] {$C_{14}$};
\draw (4.8,1.4) node[anchor=north west] {$C_{15}$};
\draw [shift={(1.04,1.84)}] plot[domain=1.38:3.33,variable=\t]({1*1.71*cos(\t r)+0*1.71*sin(\t r)},{0*1.71*cos(\t r)+1*1.71*sin(\t r)});
\draw [shift={(1.92,1.98)}] plot[domain=3.8:4.97,variable=\t]({1*1.02*cos(\t r)+0*1.02*sin(\t r)},{0*1.02*cos(\t r)+1*1.02*sin(\t r)});
\draw [shift={(4.71,1.31)}] plot[domain=3.28:4.52,variable=\t]({1*1.83*cos(\t r)+0*1.83*sin(\t r)},{0*1.83*cos(\t r)+1*1.83*sin(\t r)});
\draw [shift={(3.25,0.55)}] plot[domain=0.11:0.94,variable=\t]({1*1.42*cos(\t r)+0*1.42*sin(\t r)},{0*1.42*cos(\t r)+1*1.42*sin(\t r)});
\draw [shift={(2.5,4.47)}] plot[domain=3.83:4.99,variable=\t]({1*1.48*cos(\t r)+0*1.48*sin(\t r)},{0*1.48*cos(\t r)+1*1.48*sin(\t r)});
\draw [->,dash pattern=on 2pt off 2pt] (-1.48,-1.16) -- (-1.44,5.38);
\draw (7.04,0.1) node[anchor=north west] {$x$};
\draw (-1.26,5.5) node[anchor=north west] {$y$};
\draw (-1.4,-0.54) node[anchor=north west] {$O$};
\begin{scriptsize}
\draw [fill=qqqqff] (1.36,-0.48) circle (1.5pt);
\draw [fill=qqqqff] (1.36,3.52) circle (1.5pt);
\draw [fill=qqqqff] (4.36,-0.48) circle (1.5pt);
\draw [fill=qqqqff] (4.32,2.98) circle (1.5pt);
\draw [fill=qqqqff] (2.9,1.06) circle (1.5pt);
\draw [fill=qqqqff] (1.52,2.18) circle (1.5pt);
\draw [fill=qqqqff] (2.9,3.04) circle (1.5pt);
\draw [fill=qqqqff] (4.08,1.7) circle (1.5pt);
\draw [fill=qqqqff] (5.22,1.54) circle (1.5pt);
\draw [fill=qqqqff] (4.66,0.7) circle (1.5pt);
\draw [fill=qqqqff] (2.18,1) circle (1.5pt);
\draw [fill=qqqqff] (1.12,1.36) circle (1.5pt);
\draw [fill=qqqqff] (-0.64,1.52) circle (1.5pt);
\draw[color=qqqqff] (-0.4,1.86) node {};
\draw [fill=qqqqff] (1.36,3.52) circle (1.5pt);
\draw[color=qqqqff] (1.6,3.86) node {};
\draw [fill=qqqqff] (1.12,1.36) circle (1.5pt);
\draw[color=qqqqff] (1.36,1.7) node {};
\draw [fill=qqqqff] (2.18,1) circle (1.5pt);
\draw[color=qqqqff] (2.42,1.34) node {};
\draw [fill=qqqqff] (4.36,-0.48) circle (1.5pt);
\draw[color=qqqqff] (4.6,-0.14) node {};
\draw [fill=qqqqff] (2.9,1.06) circle (1.5pt);
\draw[color=qqqqff] (3.14,1.4) node {};
\draw [fill=qqqqff] (4.08,1.7) circle (1.5pt);
\draw[color=qqqqff] (4.32,2.04) node {};
\draw [fill=qqqqff] (4.66,0.7) circle (1.5pt);
\draw[color=qqqqff] (4.9,1.04) node {};
\draw [fill=qqqqff] (1.36,3.52) circle (1.5pt);
\draw[color=qqqqff] (1.6,3.86) node {};
\draw [fill=qqqqff] (2.9,3.04) circle (1.5pt);
\draw[color=qqqqff] (3.14,3.38) node {};
\draw [fill=xdxdff] (-1.48,-0.48) circle (1.5pt);
\end{scriptsize}
\end{tikzpicture}

\caption{An example of admissible domain}\label{F1}
\end{figure}
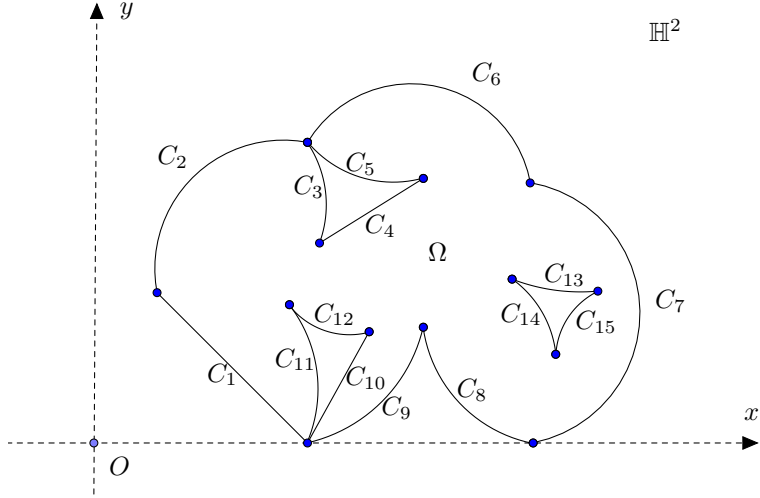

\begin{md}[General maximum principle]\label{PMG}
Let $\Omega\subset\hh^2$ be a admissible domain. Let $u_1,u_2$ be two minimal solutions on $\Omega$. Suppose that $\limsup(u_1-u_2)\le  0$ for any approach to the boundary of $\Omega$ exception of its vertices. Then $u_1\le  u_2.$
\end{md}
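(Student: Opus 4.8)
The plan is to run a flux (divergence--theorem) argument built on two structural features of the operator $\Mcal$. Writing $Y_i:=yX_{u_i}=\dfrac{y^2\nabla u_i}{W_{u_i}}$ for $i=1,2$, the minimality of $u_1,u_2$ says precisely that $\Div Y_i=\Mcal u_i=0$, so the field $Y:=Y_1-Y_2$ is divergence free. The two features I would isolate are: (a) \emph{strict monotonicity}, namely $\vh{Y_1-Y_2,\nabla u_1-\nabla u_2}\ge 0$ with equality only where $\nabla u_1=\nabla u_2$; and (b) the \emph{bounded--flux estimate} $\td{Y_i}<y$, hence $\td{Y_1-Y_2}<2y$. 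Feature (a) holds because, at each fixed point, the map $\nabla u\mapsto \frac{y^2\nabla u}{W_u}$ is the gradient (in the gradient variable) of the strictly convex function $\nabla u\mapsto\sqrt{1+y^2\td{\nabla u}^2}=W_u$, hence a strictly monotone map; feature (b) follows from $\td{Y_i}=\frac{y\cdot y\td{\nabla u_i}}{W_{u_i}}<y$. The crucial consequence of (b), combined with $\int_\gamma y\,\dsf s=\ell_\euc(\gamma)$ from Proposition~\ref{pro2.3}, is that the flux of $Y$ across a curve $\gamma$ is dominated by the Euclidean length of $\gamma$.

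Next I would argue by contradiction. Fix $\epsilon>0$, set $U_\epsilon:=\{u_1-u_2>\epsilon\}$, and suppose $U_\epsilon\neq\emptyset$; deriving a contradiction forces $u_1-u_2\le\epsilon$ for every $\epsilon>0$, i.e.\ $u_1\le u_2$. The hypothesis $\limsup(u_1-u_2)\le 0$ away from the vertices shows that no interior point of an arc $C_i$ is a limit point of $U_\epsilon$, so the closure of $U_\epsilon$ meets $\dhr_\infty\Omega$ only at the finitely many vertices and (removable) ideal vertices. I would then test the divergence identity against the bounded Lipschitz function $\psi:=\min\Ngoac{(u_1-u_2-\epsilon)^+,k}$, which vanishes near the non-vertex boundary and satisfies $0\le\psi\le k$. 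Removing small balls $B(p_j,\rho)$ around each vertex $p_j$ (Euclidean balls at the removable ideal vertices on $\{y=0\}$) and using $\Div Y=0$, integration by parts gives
\begin{equation*}
\int_{\Omega\setminus\bigcup_jB(p_j,\rho)}\vh{\nabla\psi,Y}\,\dsf A=\sum_j\int_{\dhr B(p_j,\rho)}\psi\,\vh{Y,\nu}\,\dsf s,
\end{equation*}
the $\dhr\Omega$--part of the boundary contributing nothing since $\psi\equiv 0$ there.

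The step I expect to be the main obstacle is controlling the boundary terms over the small circles, and this is exactly where (b) is essential: $\bigl|\int_{\dhr B(p_j,\rho)}\psi\vh{Y,\nu}\,\dsf s\bigr|\le k\int_{\dhr B(p_j,\rho)}\td{Y}\,\dsf s\le 2k\int_{\dhr B(p_j,\rho)}y\,\dsf s=2k\,\ell_\euc\Ngoac{\dhr B(p_j,\rho)\cap\hh^2}$, and the Euclidean length of these small circles tends to $0$ as $\rho\to0$. Letting $\rho\to0$ and invoking monotone convergence (the integrand $\vh{\nabla\psi,Y}\ge0$ by (a)) yields $\int_\Omega\vh{\nabla\psi,Y}\,\dsf A=0$. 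Since on $\{0<u_1-u_2-\epsilon<k\}$ the integrand equals $\vh{\nabla(u_1-u_2),Y_1-Y_2}$, strict monotonicity forces $\nabla u_1=\nabla u_2$ a.e.\ there; letting $k\to\infty$ gives $\nabla(u_1-u_2)\equiv0$ on $U_\epsilon$.

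Finally I would close with a connectedness argument: on each component of $U_\epsilon$ the $C^2$ function $u_1-u_2$ is constant, necessarily a value $c>\epsilon$, while by continuity it must equal $\epsilon$ on the part of that component's boundary lying in $\Omega$. Hence that interior boundary is empty, the component is all of $\Omega$, and $u_1-u_2\equiv c>0$, contradicting $\limsup(u_1-u_2)\le0$ at a non-vertex boundary point. Thus $U_\epsilon=\emptyset$ for every $\epsilon>0$, proving $u_1\le u_2$. The only genuinely delicate points are the measure--theoretic justification of the integration by parts with the Lipschitz test function $\psi$ (carried out in the Sobolev sense, so that no level--set regularity of $u_1-u_2$ is needed) and the verification that small excision curves around the removable ideal vertices on $\{y=0\}$ indeed have Euclidean length tending to zero.
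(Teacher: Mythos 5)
Your proposal is correct and follows essentially the same route as the paper's own proof: a divergence-theorem argument with the truncated test function $[u_1-u_2-\varepsilon]_0^N$, the monotonicity inequality of Lemma~\ref{lem cle}, and the key fact that the flux of $y(X_{u_1}-X_{u_2})$ across the excision curves around the (removable) vertices is bounded by their Euclidean length, which tends to zero. The only cosmetic differences are that you justify monotonicity by strict convexity of $p\mapsto\sqrt{1+y^2\|p\|^2}$ rather than by the paper's explicit computation, you excise Euclidean balls instead of nested ideal geodesics at the ideal vertices, and you close with a clopen/connectedness argument where the paper invokes the interior maximum principle (Proposition~\ref{PM}).
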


We should remark that this result is similar to the general maximum principle stated by Spruck \cite[General Maximum Principle, page 3]{Spr72}  (resp. Hauswirth-Rosenberg-Spruck \cite[Theorem 2.2]{HRS09})  for constant mean curvature surfaces in $\rr^2\times\rr$ (resp. in $\hh^2\times\rr$ and $\mathbb{S}^2\times\rr$) in the case of the bounded domain $\Omega$ and by Collin-Rosenberg \cite[Theorem 2]{CR10}  for minimal surfaces in $\hh^2\times\rr$ in the case of the unbounded domain $\Omega$.

\begin{proof} 
Assume the contrary, that the set $\{u_1>u_2\}$ is non-empty.

Let $N$ and $\varepsilon$ be positive constants, with $N$ large and $\varepsilon$ small. Define
$$\varphi=\left[u_1-u_2-\varepsilon\right]_{0}^{N}=\begin{cases}
0& \text{ if }u_1-u_2-\varepsilon\le 0,\\
u_1-u_2-\varepsilon&\text{ if } 0<u_1-u_2-\varepsilon <N,\\
N& \text{ if } u_1-u_2-\varepsilon\ge N.
\end{cases}$$

Then $\varphi$ is a continuous piecewise differentiable function in $\Omega$ satisfying $0\le \varphi <N$. Moreover $\nabla\varphi=\nabla u_1-\nabla u_2$ in the set where $\varepsilon<u_1-u_2<N+\varepsilon $, and $\nabla\varphi=0$ almost every where in the complement of this set. For each ideal vertex $p$ of $\Omega$, we consider a sequence of nested ideal geodesics $H_{p,n}$ converging to $p$. By nested we mean that if $\Hcal_{p,n}$ is the component of $\hh^2\setminus H_{p,n}$ containing $p$ on its ideal boundary, then $\Hcal_{p,n+1}\subset\Hcal_{p,n}$. Assume $\overline{\Hcal}_{p_1,n}\cap\overline{\Hcal}_{p_2,n}=\emptyset$ for every different ideal vertices $p_1,p_2$ of $\Omega$.
Define
\[\Omega_n=\Omega\setminus\left(\bigcup_{p\in E_1}\overline\Dbb_{\frac{1}{n}}(p)\cup \bigcup_{p\in E_2}\overline{\Hcal}_{p,n}\right),\quad \Gamma_1=\dhr\Omega_n\cap\dhr\Omega,\quad \Gamma_2=\dhr\Omega_n\setminus\Gamma_1,\]
where $E_1$ (resp. $E_2$) is the set of vertices in $\hh^2$ (resp. vertices at $\dhr_\infty\hh^2$) of $\Omega$ (see Figure \ref{F2}).

It follows from definition that
\begin{equation}\label{tc1}
\varphi=0 \text{ on a neighborhood of $\Gamma_1$},\qquad \ell_\euc(\Gamma_2)\to 0\text{ as $n\to\infty$}.
\end{equation}


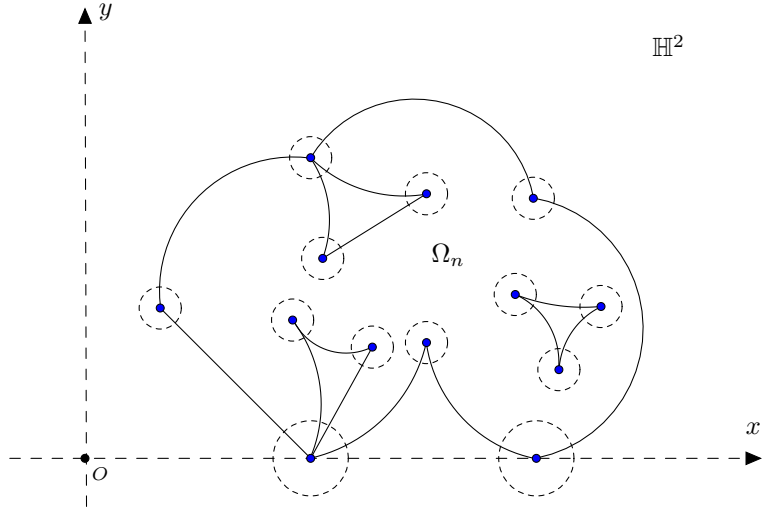
\begin{figure}[!h]
\centering
\definecolor{qqqqff}{rgb}{0,0,1}
\begin{tikzpicture}[line cap=round,line join=round,>=triangle 45,x=1.0cm,y=1.0cm]
\clip(-3.32,-1.74) rectangle (7.28,5.26);
\draw (1,-1)-- (-1,1);
\draw [shift={(3.66,0.73)}] plot[domain=-1.37:1.4,variable=\t]({1*1.76*cos(\t r)+0*1.76*sin(\t r)},{0*1.76*cos(\t r)+1*1.76*sin(\t r)});
\draw [shift={(2.38,2.17)}] plot[domain=0.18:2.6,variable=\t]({1*1.61*cos(\t r)+0*1.61*sin(\t r)},{0*1.61*cos(\t r)+1*1.61*sin(\t r)});
\draw [shift={(0.54,1)}] plot[domain=4.94:6.06,variable=\t]({1*2.05*cos(\t r)+0*2.05*sin(\t r)},{0*2.05*cos(\t r)+1*2.05*sin(\t r)});
\draw [shift={(-0.23,2.17)}] plot[domain=-0.35:0.59,variable=\t]({1*1.48*cos(\t r)+0*1.48*sin(\t r)},{0*1.48*cos(\t r)+1*1.48*sin(\t r)});
\draw (1.16,1.66)-- (2.54,2.52);
\draw [shift={(4.63,3.53)}] plot[domain=4.34:4.8,variable=\t]({1*2.52*cos(\t r)+0*2.52*sin(\t r)},{0*2.52*cos(\t r)+1*2.52*sin(\t r)});
\draw [shift={(5.3,0.12)}] plot[domain=2.02:3.08,variable=\t]({1*1*cos(\t r)+0*1*sin(\t r)},{0*1*cos(\t r)+1*1*sin(\t r)});
\draw (1,-1)-- (1.82,0.48);
\draw [shift={(-0.69,-0.28)}] plot[domain=-0.4:0.66,variable=\t]({1*1.83*cos(\t r)+0*1.83*sin(\t r)},{0*1.83*cos(\t r)+1*1.83*sin(\t r)});
\draw [->,dash pattern=on 4pt off 4pt] (-3,-1) -- (7,-1);
\draw (5.42,4.76) node[anchor=north west] {$\mathbb{H}^2$};
\draw (2.48,1.98) node[anchor=north west] {$\Omega_n$};
\draw [dash pattern=on 2pt off 2pt] (1,3) circle (0.28cm);
\draw [dash pattern=on 2pt off 2pt] (2.54,2.52) circle (0.28cm);
\draw [dash pattern=on 2pt off 2pt] (1.16,1.66) circle (0.28cm);
\draw [dash pattern=on 2pt off 2pt] (3.96,2.46) circle (0.28cm);
\draw [dash pattern=on 2pt off 2pt] (4.86,1.02) circle (0.28cm);
\draw [dash pattern=on 2pt off 2pt] (3.72,1.18) circle (0.28cm);
\draw [dash pattern=on 2pt off 2pt] (4.3,0.18) circle (0.28cm);
\draw [dash pattern=on 2pt off 2pt] (2.54,0.54) circle (0.28cm);
\draw [dash pattern=on 2pt off 2pt] (1.82,0.48) circle (0.28cm);
\draw [dash pattern=on 2pt off 2pt] (0.76,0.84) circle (0.28cm);
\draw [dash pattern=on 2pt off 2pt] (-1,1) circle (0.28cm);
\draw [dash pattern=on 2pt off 2pt] (1,-1) circle (0.5cm);
\draw [dash pattern=on 2pt off 2pt] (4,-1) circle (0.5cm);
\draw [shift={(0.77,1.23)}] plot[domain=1.44:3.27,variable=\t]({1*1.78*cos(\t r)+0*1.78*sin(\t r)},{0*1.78*cos(\t r)+1*1.78*sin(\t r)});
\draw [shift={(1.47,1.18)}] plot[domain=3.59:5.18,variable=\t]({1*0.78*cos(\t r)+0*0.78*sin(\t r)},{0*0.78*cos(\t r)+1*0.78*sin(\t r)});
\draw [shift={(4.35,0.79)}] plot[domain=3.28:4.52,variable=\t]({1*1.83*cos(\t r)+0*1.83*sin(\t r)},{0*1.83*cos(\t r)+1*1.83*sin(\t r)});
\draw [shift={(2.24,4.26)}] plot[domain=3.94:4.88,variable=\t]({1*1.77*cos(\t r)+0*1.77*sin(\t r)},{0*1.77*cos(\t r)+1*1.77*sin(\t r)});
\draw [shift={(3.31,0.28)}] plot[domain=-0.1:1.15,variable=\t]({1*0.99*cos(\t r)+0*0.99*sin(\t r)},{0*0.99*cos(\t r)+1*0.99*sin(\t r)});
\draw [->,dash pattern=on 4pt off 4pt] (-1.98,-1.64) -- (-2,5);
\draw (-1.94,5.18) node[anchor=north west] {$y$};
\draw (6.66,-0.4) node[anchor=north west] {$x$};
\begin{scriptsize}
\draw [fill=qqqqff] (1,-1) circle (1.5pt);
\draw [fill=qqqqff] (-1,1) circle (1.5pt);
\draw [fill=qqqqff] (1,3) circle (1.5pt);
\draw [fill=qqqqff] (4,-1) circle (1.5pt);
\draw [fill=qqqqff] (3.96,2.46) circle (1.5pt);
\draw [fill=qqqqff] (2.54,0.54) circle (1.5pt);
\draw [fill=qqqqff] (1.16,1.66) circle (1.5pt);
\draw [fill=qqqqff] (2.54,2.52) circle (1.5pt);
\draw [fill=qqqqff] (3.72,1.18) circle (1.5pt);
\draw [fill=qqqqff] (4.86,1.02) circle (1.5pt);
\draw [fill=qqqqff] (4.3,0.18) circle (1.5pt);
\draw [fill=qqqqff] (1.82,0.48) circle (1.5pt);
\draw [fill=qqqqff] (0.76,0.84) circle (1.5pt);
\draw [fill=black] (-2,-1) circle (1.5pt);
\draw[color=black] (-1.8,-1.2) node {$O$};
\end{scriptsize}
\end{tikzpicture}
\caption{The domain $\Omega_n$}\label{F2}
\end{figure}


\noindent Define
$$J_n=\int_{\dhr\Omega_n}\varphi y\vh{X_{u_1}-X_{u_2},\nu} \,\dsf s$$
where $\nu$ is the exterior normal to $\dhr\Omega_n$, $W_{u_i}=\sqrt{1+y^2\td{\nabla u_i}^2}$ and $X_{u_i}=\dfrac{y\nabla u_i}{W_{u_i}}, i=1,2.$

\begin{kd}
\begin{itemize}
\item[\rm (i)] $J_n\ge 0$ with equality if and only if  $\nabla u_1=\nabla u_2$ on the set $\{x\in\Omega_n:\varepsilon<u_1-u_2<N\}$.
\item[\rm (ii)] $J_n$ is increasing as $n\to\infty$.
\end{itemize}
\end{kd}
\begin{proof}
By Divergence theorem, we have
\begin{align*}
J_n&=\int_{\Omega_n}\Div\left( \varphi y(X_{u_1}-X_{u_2})\right)\,\dsf\Acal\\
&=\int_{\Omega_n}\langle y\nabla\varphi,X_{u_1}-X_{u_2}\rangle\,\dsf\Acal+\int_{\Omega_n}\varphi\Div(yX_{u_1}-yX_{u_2})\,\dsf\Acal.
\end{align*}
By our assumptions,
$$\varphi\Div(yX_{u_1}-yX_{u_2})=\varphi(\Mcal u_1-\Mcal u_2)= 0.$$
Moreover, by  formula (\ref{equ3.3}) of Lemma \ref{lem cle}
$$\langle y\nabla\varphi,X_{u_1}-X_{u_2}\rangle=\vh{y\nabla u_1-y\nabla u_2,\frac{y\nabla u_1}{W_{u_1}}-\frac{y\nabla u_2}{W_{u_2}}}\ge 0.$$
and equality if and only if $y\nabla u_1=y\nabla u_2$. \\
Then $$J_n=\int_{\Omega_n}\Div\left( \varphi y(X_{u_1}-X_{u_2})\right)\,\dsf\Acal=\int_{\Omega_n}\langle y\nabla\varphi,X_{u_1}-X_{u_2}\rangle\,\dsf\Acal  \ge 0 $$ 
 and $J_n=0 $ if $\nabla u_1=\nabla u_2$. Since $\Omega_n$ is an increasing domain, $J_n$ is increasing. 
This proves the assertion.
\renewcommand{\qedsymbol}{$\Diamond$}
\end{proof}

\begin{kd}
$J_n=o(1)$ as $n\to\infty$.
\end{kd}
\begin{proof}
We have
$$J_n=\int_{\Gamma_1}\varphi y\vh{X_{u_1}-X_{u_2},\nu}\,\dsf s+\int_{\Gamma_2}\varphi y\vh{X_{u_1}-X_{u_2},\nu}\,\dsf s.$$
By (\ref{tc1}),  and $\td{X_{u^i}}\le 1,i=1,2;$ $0\le \varphi\le N$, we have
$$\int_{\Gamma_1}\varphi y\vh{X_{u_1}-X_{u_2},\nu}\, \dsf s=0$$
and
\begin{align*}
\left|\int_{\Gamma_2}\varphi y\vh{X_{u_1}-X_{u_2},\nu}\, \dsf s\right|&=\left|\int_{\Gamma_2}\varphi \vh{X_{u_1}-X_{u_2},\nu}\,\dsf s_{\euc} \right|\\
&\le 2N\ell_\euc(\Gamma_2)=o(1) \quad\text{ as $n\to \infty$}.
\end{align*}
Assertion is then proved.
\renewcommand{\qedsymbol}{$\Diamond$}
\end{proof}

It follows from the previous assertions that $\nabla u_1=\nabla u_2$ on the set $\{\varepsilon<u_1-u_2<N \}$. Since $\varepsilon>0$ and $N$ are arbitrary, $\nabla u_1=\nabla u_2$ whenever $u_1> u_2$. So $u_1=u_2+c, (c>0)$ in any nontrivial component of the set $\{u_1>u_2\}$. Then the maximum principle (Theorem \ref{PM}) ensures  $u_1=u_2+c$ in $\Omega$ and by assumptions of the theorem, the constant must be nonpositive, a contradiction.
\end{proof}

\begin{bd}\label{lem cle}
Let $v_1,v_2$ be two vectors in a finite dimensional Euclidean space. Then
\begin{equation}
(v_1-v_2)\left(\dfrac{v_1}{W_1}-\dfrac{v_2}{W_2}\right)=\dfrac{W_1+W_2}{2}\left(\left\|\dfrac{v_1}{W_1}-\dfrac{v_2}{W_2} \right\|^2+\left(\dfrac{1}{W_1}-\dfrac{1}{W_2}\right)^2\right).
\end{equation}
where $W_i=\sqrt{1+\td{v_i}^2}.$
In particular,
\begin{equation}\label{equ3.3}
(v_1-v_2)\left(\dfrac{v_1}{W_1}-\dfrac{v_2}{W_2}\right)\ge \left\|\dfrac{v_1}{W_1}-\dfrac{v_2}{W_2} \right\|^2,\quad (v_1-v_2)\left(\dfrac{v_1}{W_1}-\dfrac{v_2}{W_2}\right)\ge 0
\end{equation}
with equality at a point if and only if $v_1=v_2$.
\end{bd}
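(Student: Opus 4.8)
The plan is to recognize the claimed equality as an instance of a purely metric identity about unit vectors, once the two squared terms on the right-hand side are packaged into a single squared distance by a one-dimensional augmentation. Concretely, writing $E$ for the ambient Euclidean space, I would pass to $E\oplus\rr$ and set $\tilde v_i:=(v_i,1)$, so that $\td{\tilde v_i}=\sqrt{\td{v_i}^2+1}=W_i$ and the normalized vectors $e_i:=\tilde v_i/W_i=(v_i/W_i,\,1/W_i)$ are unit. Because $\tilde v_1-\tilde v_2=(v_1-v_2,0)$ has vanishing last coordinate, its inner product with $e_1-e_2$ equals $(v_1-v_2)\cdot(v_1/W_1-v_2/W_2)$, which is the left-hand side of the lemma; and $\td{e_1-e_2}^2=\td{v_1/W_1-v_2/W_2}^2+(1/W_1-1/W_2)^2$ is exactly the bracketed factor on the right. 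Hence the lemma is equivalent to the general identity
\[
(w_1-w_2)\cdot\left(\frac{w_1}{\td{w_1}}-\frac{w_2}{\td{w_2}}\right)=\frac{\td{w_1}+\td{w_2}}{2}\,\td{\frac{w_1}{\td{w_1}}-\frac{w_2}{\td{w_2}}}^2
\]
for arbitrary nonzero vectors $w_1,w_2$.

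Next I would prove this identity by the elementary expansion $w_i=\td{w_i}\,f_i$ with $f_i$ unit: the left-hand side becomes $\td{w_1}+\td{w_2}-(\td{w_1}+\td{w_2})\,f_1\cdot f_2=(\td{w_1}+\td{w_2})(1-f_1\cdot f_2)$, while $\td{f_1-f_2}^2=2-2\,f_1\cdot f_2$ shows the right-hand side equals the same quantity. This yields the displayed equality of the lemma verbatim.

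Finally, for the inequalities \eqref{equ3.3} I would invoke $W_i=\sqrt{1+\td{v_i}^2}\ge 1$, so that $\tfrac{W_1+W_2}{2}\ge 1$; discarding the nonnegative term $(1/W_1-1/W_2)^2$ and using this bound gives the first inequality, and nonnegativity of the right-hand side is immediate. For the equality cases, the right-hand side vanishes iff $e_1=e_2$ in $E\oplus\rr$; comparing last coordinates forces $W_1=W_2$ and then $v_1=v_2$, with the converse trivial, and the same comparison shows equality in the first inequality likewise forces $v_1=v_2$. There is no genuinely hard step; the only point worth isolating is the augmentation, which is precisely what reveals that the two apparently unrelated squared terms on the right are a single squared norm, after which both sides collapse to $(W_1+W_2)(1-f_1\cdot f_2)$ and the sharp cases become transparent.
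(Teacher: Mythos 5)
Your proof is correct, and it takes a route that differs in organization from the paper's, though the underlying algebra coincides. The paper proceeds by direct expansion: it computes $\vh{v_1-v_2,\frac{v_1}{W_1}-\frac{v_2}{W_2}}$, substitutes $\td{v_i}^2=W_i^2-1$, factors out $W_1+W_2$ to reach $(W_1+W_2)\Ngoac{1-\frac{\vh{v_1,v_2}}{W_1W_2}-\frac{1}{W_1W_2}}$, and then regroups this remainder into the two squares of the right-hand side. You instead lift to $E\oplus\rr$ via $\tilde v_i=(v_i,1)$, note $\td{\tilde v_i}=W_i$, observe that the left-hand side equals $\vh{\tilde v_1-\tilde v_2,e_1-e_2}$ (the last coordinates cancel) and that the bracket on the right is the single squared norm $\td{e_1-e_2}^2$, and so reduce the lemma to the general identity $\vh{w_1-w_2,\frac{w_1}{\td{w_1}}-\frac{w_2}{\td{w_2}}}=\frac{\td{w_1}+\td{w_2}}{2}\td{\frac{w_1}{\td{w_1}}-\frac{w_2}{\td{w_2}}}^2$ for nonzero vectors, proved by writing $w_i=\td{w_i}f_i$ with $f_i$ unit, both sides collapsing to $(\td{w_1}+\td{w_2})(1-\vh{f_1,f_2})$. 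These are at bottom the same computation---the paper's intermediate expression is exactly $(W_1+W_2)(1-\vh{e_1,e_2})$ in your lifted space---but your framing explains the structure: it shows why the two apparently unrelated squares are a single norm, where the factor $\frac{W_1+W_2}{2}$ comes from, and it makes the equality cases transparent (vanishing of the right-hand side forces $e_1=e_2$, hence $W_1=W_2$ and then $v_1=v_2$), a discussion the paper leaves essentially implicit. The one spot to spell out slightly more is equality in the first inequality of \eqref{equ3.3}: after the last-coordinate comparison gives $W_1=W_2$, you need the dichotomy that either $\frac{W_1+W_2}{2}>1$, forcing $\td{e_1-e_2}=0$, or $W_1=W_2=1$, forcing $v_1=v_2=0$; either way $v_1=v_2$, as you assert.
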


\begin{proof} Let us compute
\begin{align*}
(v_1-v_2)(\dfrac{v_1}{W_1}-\dfrac{v_2}{W_2})=& \dfrac{\|v_1\|^2}{W_1}+ \dfrac{\|v_2\|^2}{W_2}- \langle v_1,v_2\rangle \left(\dfrac{1}{W_1}+\dfrac{1}{W_2}\right)\\
=&W_1-\dfrac{1}{W_1} + W_2-\dfrac{1}{W_2}- \langle v_1,v_2\rangle \left(\dfrac{1}{W_1}+\dfrac{1}{W_2}\right)\\
=&(W_1+W_2)\left(1- \dfrac{\langle v_1,v_2\rangle}{W_1W_2} -\dfrac{1}{W_1W_2}\right)\\
=&(W_1+W_2)\left(\dfrac{1}{2}\left\|\dfrac{v_1}{W_1}-\dfrac{v_2}{W_2} \right\|^2+ \dfrac{1}{2W_1^2}\right.\\
&\left.+\dfrac{1}{2W_2^2}-\dfrac{1}{W_1W_2}\right)\\
=&\dfrac{W_1+W_2}{2}\left(\left\|\dfrac{v_1}{W_1}-\dfrac{v_2}{W_2} \right\|^2+\left(\dfrac{1}{W_1}-\dfrac{1}{W_2}\right)^2\right).
\end{align*}
This proves the lemma.
\end{proof}

\subsection{Gradient estimate}
An important result concerning minimal solutions  is a gradient estimate.
\begin{dl}[Interior gradient estimate]\label{EGI}
Let $u$ be a nonnegative minimal solution on $\Omega=B_R(p)\subset\hh^2$ . Then there exists a constant $C$ that depends only on $p, R$ such that
\begin{equation}
\td{\nabla u(p)}\le f\left(\dfrac{u(p)}{R}\right),\qquad f(t)=e^{C(1+t^2)}.
\end{equation}
\end{dl}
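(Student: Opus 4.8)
The plan is to prove the estimate by the geometric maximum-principle method of Korevaar, adapted to the warped product $\sol=\hh^2\times_y\rr$. Write $\Sigma=\Gr(u)$ for the minimal graph with upward unit normal $N$ given by \eqref{equ2.12}, and recall $\td{\dhr_t}=y$. Since $\dhr_t$ is a Killing field (by the isometries \eqref{equ2.5}) and $\Sigma$ is minimal, its normal component $\nu:=\vh{N,\dhr_t}=\dfrac{y}{W}>0$ is a Jacobi field on $\Sigma$:
\[
\Delta_\Sigma\nu+\Ngoac{\td{A}^2+\overline{\Ric}(N,N)}\nu=0,
\]
where $\Delta_\Sigma$ is the Laplace--Beltrami operator of $\Sigma$, $A$ its second fundamental form, and $\overline{\Ric}$ the Ricci curvature of $\sol$. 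Because $W=\sqrt{1+y^2\td{\nabla u}^2}\ge 1$, bounding $\td{\nabla u(p)}$ from above amounts to bounding $\nu(p)=y(p)/W(p)$ from below, so the task is to produce a lower bound for the positive Jacobi field $\nu$ at the center $p$.

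The second step is to turn the Jacobi equation into a differential inequality suited to the maximum principle. Setting $\Phi:=-\log\nu=\log W-\log y$ on $\Sigma$, a direct computation using the Jacobi equation and $\td{\nabla^\Sigma\Phi}^2=\td{\nabla^\Sigma\nu}^2/\nu^2$ gives
\[
\Delta_\Sigma\Phi=\td{A}^2+\overline{\Ric}(N,N)+\td{\nabla^\Sigma\Phi}^2\ge \td{\nabla^\Sigma\Phi}^2-\kappa,
\]
where $\kappa=\kappa(p,R)\ge 0$ is any upper bound for $-\overline{\Ric}(N,N)$ over the region of $\sol$ lying above $B_R(p)$. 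Such a bound exists because over the fixed geodesic ball $B_R(p)$ the warping coordinate $y$ stays in a compact subinterval of $(0,\infty)$ and the curvature of $\sol$---computable from the connection coefficients recorded in Section \ref{sec2}---is correspondingly bounded; this is exactly where the dependence of the final constant on $p$ and $R$ originates.

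Finally I would run Korevaar's argument on $\Sigma$, testing $\Phi$ against a cutoff $\eta$ of Korevaar type that depends both on the base point and on the height $u$ and that vanishes over $\dhr B_R(p)$. At an interior maximum of $\eta\Phi$ the relations $\nabla^\Sigma(\eta\Phi)=0$ and $\Delta_\Sigma(\eta\Phi)\le 0$, combined with $\Delta_\Sigma\Phi\ge\td{\nabla^\Sigma\Phi}^2-\kappa$, force a pointwise bound; the nonnegativity $u\ge 0$ fixes the sign of the height term, while the quadratic dependence in the exponent emerges from the height-dependence of $\eta$ together with the size of $\kappa$. Tracing this through yields $\Phi(p)\le C\Ngoac{1+\Ngoac{u(p)/R}^2}$, and since $\td{\nabla u}=\sqrt{W^2-1}/y\le W/y=e^{\Phi}$, this gives $\td{\nabla u(p)}\le e^{C\Ngoac{1+(u(p)/R)^2}}=f\Ngoac{u(p)/R}$ with $f$ as in the statement. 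The main obstacle is the warped structure itself: because $\dhr_t$ is non-unitary and non-parallel, the coordinate $t$ is not harmonic on $\Sigma$ and the Jacobi operator carries the genuinely variable factor $y$ together with the indefinite curvature term $\overline{\Ric}(N,N)$; controlling the sign and size of these uniformly over $B_R(p)$, and choosing $\eta$ so that every error term is dominated by $\td{\nabla^\Sigma\Phi}^2$, is the technical heart of the argument.
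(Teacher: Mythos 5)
Your proposal is correct and is essentially the paper's own argument (the paper explicitly follows Spruck and Mazet): both hinge on the fact that $\nu=\vh{N,\dhr_t}=y/W$ is a positive Jacobi field on the minimal graph and then run a Korevaar-type maximum-principle argument with a cutoff built from the height $u$ and the squared distance to $p$, vanishing on $\dhr B_R(p)$, with $u\ge 0$ fixing the sign of the height term and the curvature bound on $\sol$ supplying the constant. The only difference is cosmetic: you maximize $\eta\Phi$ with $\Phi=-\log\nu$ and keep $\td{\nabla_\Sigma\Phi}^2$ explicit, while the paper maximizes $h=\eta\cdot\tfrac{1}{\nu}$ with $\eta=e^{K\varphi}-1$ and absorbs that quadratic term into the drift operator $Lf=\Delta_\Sigma f-2\nu\vh{\nabla_\Sigma\tfrac{1}{\nu},\nabla_\Sigma f}$.
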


The proof of this result is similar to the one of the gradient estimate proved by Spruck \cite[Theorem 1.1]{Spr07} and Mazet \cite[Proposition 16]{Maz13}.

Before beginning the proof, let us make some preliminary computation.

\begin{bd}
Let $u$ be a minimal solution on a domain $\Omega\subset\hh^2$. Denote by $\Sigma$ the graph of $u$.
Then
\begin{equation}
\nabla_\Sigma u=\dfrac{1}{y^2}\dhr_t^\top,\qquad \td{\nabla_\Sigma u}^2=\dfrac{1}{y^2}\left(1-\dfrac{1}{W^2}\right) \quad\text{and}\qquad
\Delta_\Sigma u=\dfrac{2\vh{\dhr_y,N}}{W}
\end{equation}
where the subscript $._\Sigma$ signifies that we compute the object in the Riemannian  metric of the surface $\Sigma$.
\end{bd}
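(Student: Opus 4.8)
The key observation is that on $\Sigma=\Gr(u)$ the intrinsic function $u$ is nothing but the restriction of the ambient coordinate function $t$, since $\Sigma=\{(x,y,t):t=u(x,y)\}$. Hence $\nabla_\Sigma u=\nabla_\Sigma(t|_\Sigma)$ and $\Delta_\Sigma u=\Delta_\Sigma(t|_\Sigma)$, and I can reduce all three identities to ambient computations using $\bn t=\frac{1}{y^2}\dhr_t$ (recorded in the excerpt) and the explicit normal $N=-X_u+\frac{1}{yW}\dhr_t$ from \eqref{equ2.12}. The gradient identity is then immediate: for any ambient function the intrinsic gradient of its restriction to $\Sigma$ is the tangential projection of the ambient gradient, so $\nabla_\Sigma u=(\bn t)^\top=\frac{1}{y^2}\dhr_t^\top$.

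For the norm I orthogonally decompose $\dhr_t=\dhr_t^\top+\vh{\dhr_t,N}N$. Since $X_u=\frac{y\nabla u}{W}$ is a horizontal $\hh^2$-vector it is orthogonal to $\dhr_t$, so from the formula for $N$ and $\td{\dhr_t}^2=y^2$ I obtain $\vh{\dhr_t,N}=\frac{1}{yW}\td{\dhr_t}^2=\frac{y}{W}$, hence $\td{\dhr_t^\perp}^2=\frac{y^2}{W^2}$. The theorem of Pythagoras gives $\td{\dhr_t^\top}^2=y^2-\frac{y^2}{W^2}=y^2\bigl(1-\frac{1}{W^2}\bigr)$, and since $\nabla_\Sigma u=\frac{1}{y^2}\dhr_t^\top$ I conclude $\td{\nabla_\Sigma u}^2=\frac{1}{y^4}\td{\dhr_t^\top}^2=\frac{1}{y^2}\bigl(1-\frac{1}{W^2}\bigr)$.

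For the Laplacian I would use the standard identity for a hypersurface $\Sigma$ with unit normal $N$ and (trace) mean curvature vector $\vec{H}$, namely $\Delta_\Sigma(t|_\Sigma)=\Div_\sol(\bn t)-\mathrm{Hess}\,t(N,N)+\vh{\bn t,\vec{H}}$, where $\mathrm{Hess}\,t(X,Y):=\vh{\bn_X\bn t,Y}$ is the ambient Hessian. Two of the three terms vanish. First, $t$ is ambient-harmonic: exactly as in the computation of $\Div_\sol\bigl(\frac{1}{yW}\dhr_t\bigr)$ in Proposition \ref{equ-minimale}, one has $\Div_\sol(\bn t)=\Div_\sol\bigl(\frac{1}{y^2}\dhr_t\bigr)=\vh{\bn\frac{1}{y^2},\dhr_t}+\frac{1}{y^2}\Div_\sol\dhr_t=0$, because $\bn\frac{1}{y^2}$ is a multiple of $\dhr_y$ and $\dhr_t$ is a divergence-free Killing field. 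Second, $\Sigma$ is minimal, so $\vec{H}=0$. Therefore $\Delta_\Sigma u=-\mathrm{Hess}\,t(N,N)=-\vh{\bn_N\bn t,N}$.

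The remaining Hessian term is the only genuinely computational step, and I expect it to be the main obstacle. Writing $\bn_N\bigl(\frac{1}{y^2}\dhr_t\bigr)=N\bigl(\frac{1}{y^2}\bigr)\dhr_t+\frac{1}{y^2}\bn_N\dhr_t$ and expanding $\bn_N\dhr_t$ with the connection relations $\bn_{\dhr_x}\dhr_t=0$, $\bn_{\dhr_y}\dhr_t=\frac{1}{y}\dhr_t$, $\bn_{\dhr_t}\dhr_t=-y^3\dhr_y$, the horizontal coefficients of $N=-\frac{y^3}{W}(u_x\dhr_x+u_y\dhr_y)+\frac{1}{yW}\dhr_t$ must be tracked carefully; I anticipate a clean cancellation giving $\frac{1}{y^2}\vh{\bn_N\dhr_t,N}=0$, so that $\mathrm{Hess}\,t(N,N)=N\bigl(\frac{1}{y^2}\bigr)\vh{\dhr_t,N}$. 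Finally, using $\vh{\dhr_y,N}=-\frac{yu_y}{W}$ together with $N\bigl(\frac{1}{y^2}\bigr)=-\frac{2}{y}\vh{\dhr_y,N}$ and $\vh{\dhr_t,N}=\frac{y}{W}$, I get $\mathrm{Hess}\,t(N,N)=\frac{2yu_y}{W^2}=-\frac{2\vh{\dhr_y,N}}{W}$, whence $\Delta_\Sigma u=\frac{2\vh{\dhr_y,N}}{W}$, as asserted. The delicate points are establishing the cancellation and the bookkeeping of the coefficients of $N$; every other step is formal.
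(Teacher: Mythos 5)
Your proof is correct, and for the Laplacian --- the only substantial part --- it takes a genuinely different route from the paper's. The gradient and norm identities are handled exactly as in the paper (tangential projection of $\bn t=\frac{1}{y^2}\dhr_t$, then Pythagoras with $\vh{\dhr_t,N}=\frac{y}{W}$). For $\Delta_\Sigma u$, however, the paper never introduces the ambient Hessian: it writes $\Delta_\Sigma u=\Div_\Sigma\bigl((\bn t)^\top\bigr)=\Div_\Sigma(\bn t)$ (minimality kills the mean--curvature term) and evaluates this \emph{tangential} divergence directly, using $\Div_\Sigma(\dhr_t)=0$ (Killing antisymmetry traced over $T\Sigma$) and the projection $\nabla_\Sigma y=\nabla y-\vh{\nabla y,N}N$, arriving at $\Div_\Sigma(\bn t)=\frac{2}{Wy^2}\vh{\nabla y,N}=\frac{2\vh{\dhr_y,N}}{W}$. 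You instead use the hypersurface identity $\Delta_\Sigma(t|_\Sigma)=\Delta_{\sol}t-\mathrm{Hess}\,t(N,N)+\vh{\bn t,\vec H}$, kill the first term by ambient harmonicity of $t$ and the last by minimality, and compute the normal Hessian; the two routes are the two ways of organizing the same splitting $\Delta_{\sol}t=\Div_\Sigma(\bn t)+\mathrm{Hess}\,t(N,N)$, the paper computing the tangential piece directly and you computing the ambient Laplacian and the normal piece. Your version makes it explicit where each hypothesis enters (Killing field plus warped product give $\Delta_{\sol}t=0$; minimality gives $\vec H=0$), while the paper's is slightly shorter. One remark closes the single step you left as ``anticipated'': the cancellation $\vh{\bn_N\dhr_t,N}=0$ needs no coefficient bookkeeping at all, since $\dhr_t$ is Killing, so $X\mapsto\bn_X\dhr_t$ is skew--symmetric and $\vh{\bn_X\dhr_t,X}=0$ for \emph{every} $X$; with that, $\mathrm{Hess}\,t(N,N)=N\bigl(\tfrac{1}{y^2}\bigr)\vh{\dhr_t,N}$, and your remaining identities $\vh{\dhr_t,N}=\frac{y}{W}$ and $N\bigl(\tfrac{1}{y^2}\bigr)=-\frac{2}{y}\vh{\dhr_y,N}$ (indeed $\bn\tfrac{1}{y^2}=-\frac{2}{y}\dhr_y$) are correct, giving $\mathrm{Hess}\,t(N,N)=-\frac{2\vh{\dhr_y,N}}{W}$ and hence the stated formula, so the argument is complete.
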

\begin{proof} We have
$$\nabla_\Sigma u=\nabla_\Sigma t = (\overline\nabla t)^\top= \dfrac{1}{y^2}\dhr_t^\top,$$
$$\dhr_t ^\top = \dhr_t- \vh{\dhr_t, N}N= \dhr_t- \dfrac{y}{W}N.$$
It follows that
\begin{align*}
\td{\nabla_\Sigma u}^2&=\dfrac{1}{y^4}\td{\dhr_t ^\top}^2 =\dfrac{1}{y^4} \left(\td{\dhr_t}^2-\dfrac{y^2}{W^2}\td{N}^2\right)\\
&= \dfrac{1}{y^4}\left (y^2-\dfrac{y^2}{W^2}\right )=\dfrac{1}{y^2}\left(1-\dfrac{1}{W^2}\right).
\end{align*}
We continue to compute $\Delta_\Sigma u$
$$\Delta_\Sigma u=\Div_\Sigma\nabla_\Sigma t=\Div_\Sigma(\bn t)^\top=\Div_\Sigma(\bn t) +\vh {2\vec{H}, \overline\nabla t}= \Div_\Sigma(\bn t).$$
Moreover
$$\Div_\Sigma(\bn t)=\Div_\Sigma\left(\dfrac{1}{ y^2}\dhr_t\right)=\vh{\nabla_\Sigma \frac{1}{ y^2},\dhr_t}+\dfrac{1}{ y^2}\Div_\Sigma(\dhr_t)=-\dfrac{2}{ y^3}\vh{\nabla_\Sigma y,\dhr_t}.$$
By using $\nabla_\Sigma y=\nabla y-\vh{\nabla y,N}N$, $\vh{\dhr_t, N}=\dfrac{y}{W}$, we obtain
$$\Div_\Sigma(\bn t)=\dfrac{2}{y^3}\vh{\nabla y, N}\vh{\dhr_t, N}=\dfrac{2}{W y^2}\vh{\nabla y,N}. $$
We conclude that
$$\Delta_\Sigma u=\dfrac{2}{W y^2}\vh{\nabla y,N}=\dfrac{2\vh{\dhr_y,N}}{W}.$$
This completes the proof of the lemma.
\end{proof}
Since $\dhr_t$ is a Killing vector field and $\frac{y}{W}=\vh{\dhr_t,N}$, then
\begin{equation}
\Delta_\Sigma \dfrac{y}{W}=-\left(\td{A}^2+\Ric(N,N)\right)\dfrac{y}{W}.
\end{equation}

\begin{bd}Let $u$ be a minimal solution on a domain $\Omega\subset\hh^2$. Denote by $\Sigma $ the graph of $u$ on the domain $\Omega$.

 For each function $\varphi:\Omega\to\rr$ then
\begin{equation}
\Delta_\Sigma\varphi=\Delta\varphi-\dfrac{ y^2}{W^2}\vh{\nabla_{\nabla u}\nabla\varphi,\nabla u}+\dfrac{1}{ y}\left(1-\dfrac{1}{W^2}\right)\vh{\nabla\varphi,\nabla y}.
\end{equation}
\end{bd}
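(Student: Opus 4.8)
The plan is to view $\varphi$ as a function on $\Omega\times\rr$ that does not depend on $t$, and to compare the intrinsic Laplacian $\Delta_\Sigma$ with the ambient one $\Delta_\sol$. Since $\varphi$ is $t$-independent and $\bn t=\frac{1}{y^2}\dhr_t$, the ambient gradient is horizontal, $\bn\varphi=\nabla\varphi$. The starting identity I would use is the one already exploited in the previous lemma: for any orthonormal tangent frame $\{e_1,e_2\}$ of $\Sigma$ with unit normal $N$, writing $\bn_{e_i}e_i=\nabla^\Sigma_{e_i}e_i+\vh{\bn_{e_i}e_i,N}N$ and summing gives
\[\Delta_\Sigma\varphi=\Div_\Sigma(\bn\varphi)+\vh{2\vec H,\bn\varphi}=\Delta_\sol\varphi-\operatorname{Hess}_\sol\varphi(N,N)+\vh{2\vec H,\bn\varphi}.\]
Because $\Sigma$ is a minimal graph, $\vec H=0$ and the last term disappears, so it remains to compute the two quantities $\Delta_\sol\varphi$ and $\operatorname{Hess}_\sol\varphi(N,N)$.

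For the first, I would repeat the divergence computation from the proof of Proposition \ref{equ-minimale}: for a horizontal vector field $V$ one has $\Div_\sol V=\Div V+\frac{1}{y}\vh{V,\nabla y}$ (using $\bn_{\dhr_t}\dhr_t=-y\nabla y$), and applying this to $V=\nabla\varphi$ yields
\[\Delta_\sol\varphi=\Div_\sol(\nabla\varphi)=\Delta\varphi+\frac{1}{y}\vh{\nabla\varphi,\nabla y}.\]

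For the Hessian term I would decompose $N=-\frac{y\nabla u}{W}+\frac{1}{yW}\dhr_t$ into its horizontal part $N^H=-\frac{y}{W}\nabla u$ and vertical part $\frac{1}{yW}\dhr_t$, and use bilinearity and symmetry. The cross term vanishes: $\operatorname{Hess}_\sol\varphi(N^H,\dhr_t)=-\Ngoac{\bn_{N^H}\dhr_t}\varphi$, and $\bn_{N^H}\dhr_t$ is a multiple of $\dhr_t$ (from $\bn_{\dhr_x}\dhr_t=0$ and $\bn_{\dhr_y}\dhr_t=\frac1y\dhr_t$), so it annihilates the $t$-independent $\varphi$. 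For the purely vertical term, $\operatorname{Hess}_\sol\varphi(\dhr_t,\dhr_t)=-\Ngoac{\bn_{\dhr_t}\dhr_t}\varphi=y\vh{\nabla\varphi,\nabla y}$, giving a contribution $\frac{1}{yW^2}\vh{\nabla\varphi,\nabla y}$. The key observation for the horizontal term is that $\sol=\hh^2\times_y\rr$ is a warped product over $\hh^2$, so on horizontal fields $\bn$ agrees with the hyperbolic connexion $\nabla$ (one checks this against the listed Christoffel symbols, e.g. $\bn_{\dhr_x}\dhr_x=\frac1y\dhr_y=\nabla_{\dhr_x}\dhr_x$); hence $\operatorname{Hess}_\sol\varphi(N^H,N^H)=\frac{y^2}{W^2}\vh{\nabla_{\nabla u}\nabla\varphi,\nabla u}$. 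Collecting the three pieces,
\[\operatorname{Hess}_\sol\varphi(N,N)=\frac{y^2}{W^2}\vh{\nabla_{\nabla u}\nabla\varphi,\nabla u}+\frac{1}{yW^2}\vh{\nabla\varphi,\nabla y}.\]

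Subtracting this from $\Delta_\sol\varphi$ and combining the two $\vh{\nabla\varphi,\nabla y}$ terms produces exactly the claimed identity, the coefficient $\frac1y-\frac{1}{yW^2}=\frac1y\Ngoac{1-\frac{1}{W^2}}$ appearing automatically. The only genuine obstacle is the bookkeeping of horizontal versus vertical components of $N$: one must justify that $\bn$ reduces to $\nabla$ on horizontal vectors, and that the cross term together with all $t$-derivatives vanish by the $t$-independence of $\varphi$. Once these are in place the remainder is a short algebraic simplification.
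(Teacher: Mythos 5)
Your proposal is correct and follows essentially the same route as the paper: both write $\Delta_\Sigma\varphi=\Div_{\sol}\nabla\varphi-\vh{\bn_N\nabla\varphi,N}$ (your $\operatorname{Hess}$ formulation is the same quantity, since $\bn\varphi=\nabla\varphi$), compute $\Div_{\sol}\nabla\varphi=\Delta\varphi+\frac1y\vh{\nabla\varphi,\nabla y}$ via the vertical frame term and $\bn_{\dhr_t}\dhr_t=-y\nabla y$, and split $N$ into horizontal and vertical parts to evaluate the normal term. If anything, your version is slightly more complete, since you explicitly verify the vanishing of the mixed horizontal--vertical terms and the agreement of $\bn$ with $\nabla$ on horizontal fields, both of which the paper uses silently.
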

\begin{proof} We have
\begin{align*}
\Delta_\Sigma\varphi&=\Div_\Sigma\nabla_\Sigma\varphi=\Div_\Sigma\nabla\varphi +2\vh{\nabla \varphi, \vec{H}}=\Div_\Sigma\nabla\varphi\\ &=\Div_{\sol}\nabla\varphi-\vh{\overline \nabla_N\nabla\varphi,N}.
\end{align*}
Let us evaluate the terms in the right-hand side
\begin{align*}
\Div_{\sol}\nabla\varphi&=\Div\nabla\varphi+\vh{\overline\nabla_{\frac{1}{ y}\dhr_t}\nabla\varphi,\frac{1}{ y}\dhr_t}=\Delta\varphi+\dfrac{1}{ y^2}\vh{\overline\nabla_{\dhr_t}\nabla\varphi,\dhr_t}\\
&=\Delta\varphi-\dfrac{1}{ y^2} \vh{\overline\nabla_{\dhr_t}\dhr_t, \nabla\varphi}=\Delta\varphi+\dfrac{1}{ y}\vh{\nabla\varphi,\nabla y}.
\end{align*}
Since $N=-\frac{ y\nabla u}{W}+\frac{\dhr_t}{ y W}$, 
\begin{align*}
\vh{\overline\nabla_N\nabla\varphi,N}&=\vh{\overline\nabla_{-\frac{ y\nabla u}{W}}\nabla\varphi,-\frac{ y\nabla u}{W}}+\vh{\overline\nabla_{\frac{\dhr_t}{ y W}}\nabla\varphi,\frac{\dhr_t}{ y W}}\\
&=\dfrac{ y^2}{W^2}\vh{\nabla_{\nabla u}\nabla\varphi,\nabla u}+\dfrac{1}{ y^2W^2}\vh{\overline\nabla_{\dhr_t}\nabla\varphi,\dhr_t}\\
&=\dfrac{ y^2}{W^2}\vh{\nabla_{\nabla u}\nabla\varphi,\nabla u}+\dfrac{1}{ y W^2}\vh{\nabla\varphi,\nabla y}.
\end{align*}
It follows that
$$\Delta_\Sigma\varphi=\Delta\varphi-\dfrac{ y^2}{W^2}\vh{\nabla_{\nabla u}\nabla\varphi,\nabla u}+\dfrac{1}{ y}\left(1-\dfrac{1}{W^2}\right)\vh{\nabla\varphi,\nabla y},$$
which completes the proof.
\end{proof}

Let us mention an important consequence of the lemma.
\begin{hq}\label{dist}
Let $\Omega\subset\hh^2$ be a bounded domain, let $p$ be a point of $\Omega$. Denote by  $d=d_{\hh^2}(\cdot,p)$ the hyperbolic distance from a point in $\Omega$ to $p$. Let $u$ be a minimal solution on $\Omega$. There exists a constant $C=C(\Omega, y)$ ($C$ doesn't depend on the point $p$ and the function $u$) such that
\begin{equation}
\sup_\Omega |\Delta_\Sigma d^2|\le C.
\end{equation}
\end{hq}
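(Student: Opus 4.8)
The plan is to feed $\varphi=d^2$ into the preceding lemma and then bound each of the three resulting terms, using nothing about $u$ beyond $W\ge 1$, together with the explicit description of the Hessian of the distance function in the space form $\hh^2$. Substituting $\varphi=d^2$ gives
\[
\Delta_\Sigma d^2=\Delta (d^2)-\frac{y^2}{W^2}\vh{\nabla_{\nabla u}\nabla (d^2),\nabla u}+\frac1y\Ngoac{1-\frac{1}{W^2}}\vh{\nabla (d^2),\nabla y}.
\]
Since $W=\sqrt{1+y^2\td{\nabla u}^2}\ge 1$, we have $0\le \dfrac{y^2\td{\nabla u}^2}{W^2}=\dfrac{y^2\td{\nabla u}^2}{1+y^2\td{\nabla u}^2}<1$ and $0\le 1-\dfrac1{W^2}<1$. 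Writing $e:=\nabla u/\td{\nabla u}$ (a hyperbolic unit vector), the middle term equals $\dfrac{y^2\td{\nabla u}^2}{W^2}\,\nabla^2(d^2)(e,e)$ and is therefore bounded in absolute value by the operator norm $\td{\nabla^2(d^2)}_{\mathrm{op}}$. For the last term I would use that in the hyperbolic metric $\td{\nabla y}=y$ and $\td{\nabla d}=1$, so $\frac1y\bigl|\vh{\nabla(d^2),\nabla y}\bigr|\le \frac1y\,\td{\nabla(d^2)}\,\td{\nabla y}=\td{\nabla(d^2)}=2d$. Putting these together yields, pointwise on $\Omega$ and independently of $u$,
\[
|\Delta_\Sigma d^2|\le |\Delta(d^2)|+\td{\nabla^2(d^2)}_{\mathrm{op}}+2d,
\]
so the whole problem is reduced to bounding the intrinsic hyperbolic quantities attached to $d^2$.

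Next I would control those intrinsic quantities uniformly in the pole $p\in\Omega$. Because $\hh^2$ is a simply connected space form of curvature $-1$, the function $d=d_{\hh^2}(\cdot,p)$ is smooth away from $p$ and $d^2$ is smooth across $p$; a standard Jacobi-field computation shows that $\nabla^2 d$ vanishes in the radial direction $\nabla d$ and equals $\coth(d)$ times the identity on the orthogonal complement. Hence $\nabla^2(d^2)=2d\,\nabla^2 d+2\,\dsf d\otimes\dsf d$ has the two eigenvalues $2$ (radial) and $2d\coth(d)$ (orthogonal), giving in dimension two
\[
\Delta(d^2)=2+2d\coth(d),\qquad \td{\nabla^2(d^2)}_{\mathrm{op}}=\max\{2,\,2d\coth(d)\}.
\]
The function $d\mapsto d\coth(d)$ is continuous on $[0,\infty)$ with $\lim_{d\to 0}d\coth(d)=1$, hence bounded on every compact interval. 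Since $\Omega$ is bounded, $d(q)\le \operatorname{diam}_{\hh^2}(\overline\Omega)=:D<\infty$ for all $q\in\Omega$ and all $p\in\Omega$, so each of $\Delta(d^2)$, $\td{\nabla^2(d^2)}_{\mathrm{op}}$ and $2d$ is bounded above by a constant depending only on $D$, that is, only on $\Omega$. Combining with the previous display gives the asserted estimate $\sup_\Omega|\Delta_\Sigma d^2|\le C$ with $C=C(\Omega)$ (and a fortiori with $C=C(\Omega,y)$ if one prefers the cruder bounds $\td{\nabla y}\le\sup_\Omega y$).

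The main obstacle is exactly this uniform-in-$p$ control of the Hessian of $d^2$: one needs the bound to be independent of where the pole sits inside $\Omega$ and of the solution $u$. The first difficulty is handled for free by the space-form structure of $\hh^2$, where the Hessian of the distance is given in closed form so that the estimates depend only on $d\le D$; the second difficulty is handled at the very start by the elementary inequalities $\frac{y^2\td{\nabla u}^2}{W^2}<1$ and $1-\frac1{W^2}<1$, which come solely from $W\ge 1$ and eliminate all dependence on $u$ before any geometry is used.
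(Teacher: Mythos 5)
Your proof is correct and follows exactly the route the paper intends: the corollary is stated without proof as an immediate consequence of the preceding lemma, and your argument — substituting $\varphi=d^2$ into that lemma, eliminating all dependence on $u$ through the elementary bounds $\tfrac{y^2\|\nabla u\|^2}{W^2}<1$ and $1-\tfrac{1}{W^2}<1$, and then bounding $\Delta(d^2)$, $\nabla^2(d^2)$ and $\|\nabla (d^2)\|$ via the closed-form hyperbolic Hessian $\nabla^2 d=\coth(d)\,(g-\dsf d\otimes \dsf d)$ on a domain of finite hyperbolic diameter — is precisely the omitted verification, including the correct cancellation $\tfrac1y\|\nabla(d^2)\|\,\|\nabla y\|=2d$ coming from $\|\nabla y\|=y$.
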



Using the above computations, we are ready to write the proof.
\begin{proof}[Proof of Theorem $\ref{EGI}$]
Let us denote $\nu:=\dfrac{ y}{W}=\vh{\dhr_t,N}$. By definition, $\dhr_t=\dhr_t^\top+\nu N$.

We define an operator on $\Sigma$
\begin{equation}
Lf:=\Delta_\Sigma f-2\nu\vh{\nabla_\Sigma\frac{1}{\nu},\nabla_\Sigma f}.
\end{equation}
We remark that the maximum principle is true for $L$. We have
\begin{align*}
\Delta_\Sigma\dfrac{1}{\nu}&=-\dfrac{1}{\nu^2}\Delta_\Sigma\nu+\dfrac{2}{\nu^3}\td{\nabla_\Sigma \nu}^2\\
&=-\dfrac{1}{\nu^2}\left(-\left(\Ric(N,N)+\td{A}^2\right)\nu\right)+\dfrac{2}{\nu^3}\td{-\nu^2\nabla_\Sigma \dfrac{1}{\nu}}^2\\
&=\left(\Ric(N,N)+\td{A}^2\right)\dfrac{1}{\nu}+2\nu\td{\nabla_\Sigma \dfrac{1}{\nu}}^2.
\end{align*}
Therefore
$$L\dfrac{1}{\nu}=\Delta_\Sigma \dfrac{1}{\nu}-2\nu\vh{\nabla_\Sigma\frac{1}{\nu},\nabla_\Sigma \dfrac{1}{\nu}}=\left(\Ric(N,N)+\td{A}^2\right)\dfrac{1}{\nu}\ge -\dfrac{ 2}{\nu}$$
since $\Ric_{\sol}\ge -2$ (see \cite{DHM09}).
Let us define $h=\eta\dfrac{1}{\nu}$ where $\eta$ is a positive function.
\begin{align*}
Lh=&L\left(\eta\dfrac{1}{\nu}\right)=\Delta_\Sigma\left(\eta\dfrac{1}{\nu}\right)-2\nu\vh{\nabla_\Sigma\dfrac{1}{\nu},\nabla_\Sigma\left(\eta\dfrac{1}{\nu}\right)}\\
=&\left(\eta\Delta_\Sigma\dfrac{1}{\nu}+2\vh{\nabla_\Sigma\eta,\nabla_\Sigma\dfrac{1}{\nu}}+\dfrac{1}{\nu}\Delta_\Sigma\eta\right)\\
&-2\nu\vh{\nabla_\Sigma\dfrac{1}{\nu},\eta\nabla_\Sigma\dfrac{1}{\nu}+\dfrac{1}{\nu}\nabla_\Sigma\eta}\\
=&\eta L\dfrac{1}{\nu}+\dfrac{1}{\nu}\Delta_\Sigma\eta\ge (\Delta_\Sigma\eta- 2\eta)\dfrac{1}{\nu}.
\end{align*}
We define on $\Sigma$ the function
$$\varphi(x)=\max\left\{ -\dfrac{u(x)}{2u(p)}+1-\varepsilon-\dfrac{d(x)^2}{R^2},0 \right\}$$
where $d=d(-,p)$. By definition, $$\varphi(p)=\dfrac{1}{2}-\varepsilon,\qquad 0\le\varphi\le 1-\varepsilon,\qquad {\rm supp}\varphi\subset\subset\Sigma.$$
We define $\eta=e^{K\varphi}-1.$ We calculate $\eta'(\varphi)=Ke^{K\varphi}$, $\eta''(\varphi)=K^2e^{K\varphi}.$ Let $q$ such that $$h(q)=\sup_\Omega h>0.$$ At the point $q$, we have

\begin{align*}
\Delta_\Sigma\eta- 2\eta&=\left(\eta'(\varphi)\Delta_\Sigma\varphi+\eta''(\varphi)\td{\nabla_\Sigma\varphi}^2\right)- 2\left(e^{K\varphi}-1\right)\\
&=e^{K\varphi}\left(K^2\td{\nabla_\Sigma\varphi}^2+K\Delta_\Sigma\varphi- 2\right)+ 2\\
&\ge e^{K\varphi}\left(K^2\td{\nabla_\Sigma\varphi}^2+K\Delta_\Sigma\varphi- 2\right).
\end{align*}
By the definition of $\varphi$,
\begin{align}\label{equ*}
\td{\nabla_\Sigma\varphi}^2&=\td{-\dfrac{\nabla_\Sigma u}{2u(p)}-\dfrac{\nabla_\Sigma d^2}{R^2}}^2=\td{\dfrac{\dhr_t^\top}{2u(p) y^2}+\dfrac{2d\dhr_d^\top}{R^2}}^2\notag\\
&=\dfrac{1}{4u(p)^2 y^2}\left(1-\dfrac{1}{W^2}\right)+\dfrac{4d^2}{R^4}\td{\dhr_d^\top}^2+\dfrac{2d}{u(p)R^2 y^2}\vh{\dhr_t^\top,\dhr_d^\top}\notag\\
&\ge \dfrac{1}{4u(p)^2 y^2}\left(1-\dfrac{1}{W^2}\right)+0-\dfrac{2d}{u(p)R^2 y^2}\nu\vh{\dhr_d,N}\notag\\
&=\dfrac{1}{4u(p)^2 y^2}\left(1-\dfrac{1}{W^2}-\dfrac{8 y u(p)}{R}\dfrac{d}{R}\vh{\dhr_d,N}\dfrac{1}{W}\right)\notag\\
&\ge \dfrac{1}{4u(p)^2 y^2}\left(1-\dfrac{1}{W^2}-\dfrac{8 y u(p)}{R}\dfrac{1}{W}\right)
\end{align}
since $d\le R$. Hence, if $\dfrac{1}{W}\le \min\left\{\dfrac{1}{2},\dfrac{R}{32 y u(p)}\right\}$, we have
$$\td{\nabla_\Sigma\varphi}^2\ge \dfrac{1}{8u(p)^2 y^2}.$$
Moreover
\begin{align}\label{equ**}
\Delta_\Sigma\varphi&=-\dfrac{\Delta_\Sigma u}{2u(p)}-\dfrac{\Delta_\Sigma d^2}{R^2}\notag\\
&=-\dfrac{1}{2u(p)}\left(\dfrac{2}{W y^2}\vh{\nabla y,N}\right)-\dfrac{\Delta_\Sigma d^2}{R^2}\notag\\
&=-\dfrac{1}{ y^2u(p)^2}\left(\dfrac{\vh{\nabla y,N}}{W}u(p)+\dfrac{ y^2\Delta_\Sigma d^2}{R^2}u(p)^2\right)\notag\\
&\ge -\dfrac{1}{ y^2u(p)^2}\left(C_1u(p)+\dfrac{C_2}{R^2}u(p)^2\right).
\end{align}
Combining \eqref{equ*} with \eqref{equ**} yields
\begin{multline*}
K^2\td{\nabla_\Sigma\varphi}^2+K\Delta_\Sigma\varphi- 2\\\ge \dfrac{1}{8u(p)^2 y^2}K^2-\dfrac{1}{ y^2u(p)^2}\left(C_1u(p)+\dfrac{C_2}{R^2}u(p)^2\right)K- 2\\
\ge \dfrac{1}{8u(p)^2 y^2}\left(K^2-8\left(C_1u(p)+\dfrac{C_2}{R^2}u(p)^2\right)K-8C_3u(p)^2 \right).
\end{multline*}

It follows that, if $K=\left(8C_1+\frac{C_3}{C_1}\right)u(p)+8\dfrac{C_2}{R^2}u(p)^2$, we obtain $K^2\td{\nabla_\Sigma\varphi}^2+K\Delta_\Sigma\varphi-1>0$, then, $Lh> 0.$ By Maximum principle applied to $L$, it implies that the maximum of $h$ can only be attained at a point $q$ where $\dfrac{1}{W(q)}\ge \min\left\{\dfrac{1}{2},\dfrac{R}{32 y u(p)}\right\}$.


\begin{align*}
\left(e^{K(\frac{1}{2}-\varepsilon)}-1\right)\dfrac{1}{\nu(p)}&=h(p)\le h(q)=\left(e^{K\varphi(q)}-1 \right)\dfrac{1}{\nu(q)}\\
&\le\dfrac{e^K-1}{\min\left\{\dfrac{ y(q)}{2},\dfrac{R}{32 u(p)}\right\}}.
\end{align*}
Letting $\varepsilon$ tending to $0$ we get
$$\nu(p)\ge \min\left\{\dfrac{ y(q)}{4},\dfrac{R}{64 u(p)}\right\}e^{-\frac{K}{2}}.$$
So
$$\td{\nabla u(p)}\le \max\left\{\dfrac{4}{ y(q)},\dfrac{64}{R}u(p)\right\}e^{\frac{1}{2}\left(\left(8C_1+\frac{C_3}{C_1}\right)u(p)+\frac{C_2}{R^2}u(p)^2\right)}
.$$
Then
$$\td{\nabla u(p)}\le e^{C(1+t)^2}$$
for $C=C(R)$ large enough.

\end{proof}

\subsection{Existence theorem}
In this subsection, we give a result concerning the existence of a solution of the Dirichlet problem for the minimal surface equation \eqref{MSE}.

By using interior gradient estimate (Theorem \ref{EGI}), elliptic estimate, and  Arzel\`a-Ascoli theorem, we obtain the compactness theorem as follows.

\begin{dl}[Compactness theorem]\label{TC}
Let $\{u_n\}$ be a sequence of minimal solutions on a domain $\Omega\subset\hh^2$. Suppose that $\{u_n\}$ is uniformly bounded on compact subsets of $\Omega$. Then there exists a subsequence of $\{u_n\}$ converging on compact subsets of $\Omega$ to a minimal solutions on $\Omega$.
\end{dl}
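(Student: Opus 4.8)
The plan is to combine the three analytic ingredients assembled in this section, namely the interior gradient estimate (Theorem \ref{EGI}), standard interior elliptic (Schauder) estimates for the quasilinear equation \eqref{MSE}, and the Arzel\`a--Ascoli theorem, in the usual diagonal fashion. First I would fix an exhaustion of $\Omega$ by relatively compact open subsets $\Omega_1\subset\subset\Omega_2\subset\subset\cdots$ with $\bigcup_k\Omega_k=\Omega$, and it suffices to produce a subsequence converging uniformly on each $\overline{\Omega_k}$ together with all derivatives, since a minimal solution is a local notion. On a fixed $\overline{\Omega_k}$ the hypothesis gives a uniform bound $\sup_{\overline{\Omega_k}}|u_n|\le M_k$ independent of $n$.

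The heart of the argument is passing from this $C^0$ bound to a uniform $C^1$ bound. Here I would invoke Theorem \ref{EGI}: for each point $p\in\Omega_k$ choose a hyperbolic ball $B_R(p)\subset\Omega_{k+1}$ with $R$ bounded below on $\overline{\Omega_k}$, apply the interior gradient estimate to the nonnegative minimal solution $u_n+M_{k+1}$ (adding a constant does not affect \eqref{MSE}), and obtain
\begin{equation*}
\td{\nabla u_n(p)}\le f\!\left(\frac{u_n(p)+M_{k+1}}{R}\right)\le f\!\left(\frac{2M_{k+1}}{R}\right),
\end{equation*}
a bound independent of $n$ and of $p\in\overline{\Omega_k}$. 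Thus $\{u_n\}$ is uniformly bounded in $C^1(\overline{\Omega_k})$. Feeding this into the interior Schauder theory for the uniformly elliptic (on the region where the gradient is controlled) quasilinear operator $\Mcal$ yields, after one further interior step, uniform bounds on $\{u_n\}$ in $C^{2,\alpha}(\overline{\Omega_k})$ for some $\alpha\in(0,1)$.

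Given the uniform $C^{2,\alpha}(\overline{\Omega_k})$ bounds, Arzel\`a--Ascoli extracts a subsequence converging in $C^2(\overline{\Omega_k})$. Running this over $k=1,2,\dots$ and taking a diagonal subsequence produces a single subsequence $\{u_{n_j}\}$ converging in $C^2_{\mathrm{loc}}(\Omega)$ to a limit $u\in C^2(\Omega)$. Because the convergence is in $C^2$ on compact sets, one may pass to the limit in the equation $\Mcal u_{n_j}=0$ term by term—the coefficients in \eqref{H1'} depend continuously on $u$ and its first and second derivatives—to conclude $\Mcal u=0$, so $u$ is a minimal solution on $\Omega$, as required.

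The main obstacle I anticipate is justifying the step from the $C^1$ bound to the $C^{2,\alpha}$ bound with constants uniform in $n$: one must observe that on $\overline{\Omega_k}$ the operator \eqref{H1'}, once the gradient is bounded, is uniformly elliptic with $C^\alpha$-controlled coefficients, so that the interior Schauder estimates of \cite[Ch.~13]{GT01} apply with constants depending only on $\Omega_k$, the $C^0$ and $C^1$ bounds, and the geometry of $\sol$, but not on $n$. Everything else is the routine diagonal extraction.
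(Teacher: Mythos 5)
Your proposal is correct and follows exactly the route the paper itself indicates: the paper proves Theorem \ref{TC} only by the one-line remark that it follows from the interior gradient estimate (Theorem \ref{EGI}), elliptic estimates, and the Arzel\`a--Ascoli theorem, and your argument is precisely that chain (gradient estimate to get uniform $C^1$ bounds from the $C^0$ bounds, then $C^{1,\alpha}$/Schauder to get $C^{2,\alpha}$, then Arzel\`a--Ascoli with a diagonal extraction and passage to the limit in the equation) written out in full. The only point to tidy is uniformity of the constant in Theorem \ref{EGI} as $p$ ranges over a compact set, which holds by compactness and is implicitly assumed by the paper as well.
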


\begin{dl}\label{TEP}
Let $\Omega\subset\hh^2$ be a bounded domain with $\dhr\Omega\in C^2$. Suppose that $\Omega$ is Euclidean mean convex. Let $f\in C^0(\dhr\Omega)$ be a continuous function. Then there exists a unique minimal solution $u$  on $\Omega$ such that $u=f$ on $\dhr\Omega$.
\end{dl}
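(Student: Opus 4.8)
The standard way to solve the Dirichlet problem for a quasilinear elliptic equation on a mean convex domain is the Perron-type method via the method of continuity, but here the cleanest route is to reduce to the general existence theory in Gilbarg–Trudinger. The plan is to verify that equation \eqref{MSE} satisfies the hypotheses of \cite[Theorem 11.8 or 13.8]{GT01} for the solvability of the Dirichlet problem, where the key structural conditions are (a) uniform ellipticity on compact subsets together with the right growth of the coefficients, and (b) the existence of appropriate boundary barriers, which is precisely where the Euclidean mean convexity of $\Omega$ enters. Uniqueness is immediate from the maximum principle: if $u_1,u_2$ are two solutions agreeing on $\dhr\Omega$, then $\limsup(u_1-u_2)\le 0$ and $\limsup(u_2-u_1)\le 0$ on $\dhr\Omega$, so the remark following Proposition \ref{PM} gives $u_1=u_2$.

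For existence I would argue by the continuity method. First I would write \eqref{MSE} in the divergence form $\Mcal u=\Div(yX_u)=0$ and record that the associated operator is elliptic, with ellipticity constants that are uniform on compact subsets of $\Omega$ (they degenerate only as $y\to\infty$ or $\td{\nabla u}\to\infty$, neither of which occurs on a fixed bounded domain for a solution with controlled gradient). The three a priori estimates needed to close the continuity method are: a $C^0$ bound, a global gradient bound $\sup_\Omega\td{\nabla u}$, and then $C^{1,\alpha}$ and $C^{2,\alpha}$ bounds from the standard elliptic theory. The $C^0$ bound follows from the maximum principle and comparison with constants (or with the boundary data $f$ extended suitably), since $\Mcal(\const)$ has a sign we can control. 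The interior gradient estimate is exactly Theorem \ref{EGI}; the remaining task is the boundary gradient estimate.

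The heart of the matter, and the step I expect to be the main obstacle, is the construction of boundary barriers yielding the boundary gradient estimate. This is where Euclidean mean convexity is used: by Corollary \ref{cor2.2}(2), $\Omega\times\rr$ is mean convex in $\sol$ exactly when $\Omega$ is Euclidean mean convex, and by Corollary \ref{cor2.2}(1) the vertical cylinders over Euclidean geodesics are minimal. The plan is to build, near each boundary point $p_0\in\dhr\Omega$, an upper barrier of the form $w(p)=f(p_0)+\psi(\dist_\euc(p,\dhr\Omega))$ (and a symmetric lower barrier), where $\psi$ is a concave function of the Euclidean distance chosen so that $\Mcal w\le 0$ in a boundary neighborhood; the mean-convexity sign of the distance function's Hessian makes this possible, exactly as in the classical minimal surface case. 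Matching the barrier to the boundary data and invoking the comparison principle (the remark after Proposition \ref{PM}) then pins $u$ between $w$ and its lower counterpart, giving $\sup_{\dhr\Omega}\td{\nabla u}<\infty$. With all a priori estimates in hand, the set of $t\in[0,1]$ for which the family $\Mcal_t u=0$ (interpolating between a solvable problem and \eqref{MSE}) is solvable is both open (implicit function theorem on the linearized operator, using Schauder theory) and closed (the a priori estimates plus Arzel\`a–Ascoli, i.e.\ Theorem \ref{TC}), hence equals $[0,1]$, which yields the desired solution $u$ with $u=f$ on $\dhr\Omega$.
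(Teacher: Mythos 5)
Your proposal is correct in outline but follows a genuinely different route from the paper. The paper does not use the continuity method at all: it solves a Plateau problem directly. Taking $\alpha<f<\beta$, it notes that Euclidean mean convexity of $\Omega$ makes $M^3=\overline{\Omega}\times[\alpha,\beta]$ a compact mean convex $3$-manifold (Corollary \ref{cor2.2}), invokes Meeks--Yau \cite{MY82} to produce a least-area minimal disk $\Sigma$ spanning the Jordan curve $\sigma=\{(x,f(x)):x\in\dhr\Omega\}$, and then proves that $\Sigma$ is a vertical Killing graph by a tangency argument: at a hypothetical point where $\dhr_t$ is tangent to $\Sigma$, it compares $\Sigma$ with the minimal cylinder $\gamma\times\rr$ over an Euclidean geodesic (Corollary \ref{cor2.2} again), and the local structure of intersections of minimal surfaces \cite[Theorem 7.3]{CM11} plus the maximum principle rule out the resulting cycles. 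So the paper's use of mean convexity is to confine the Plateau solution and make the cylinders $\gamma\times\rr$ available as comparison surfaces, while yours is to build Serrin-type barriers. The paper's route avoids boundary gradient estimates and barrier computations entirely and handles $C^0$ data directly; your route is more quantitative and standard on the PDE side, but two steps remain to be carried out before it closes: (a) the barrier computation for this specific operator --- the first-order term $2\frac{u_y}{y}$ and the weights $y^4$ in \eqref{H1'} are harmless only because $y$ is bounded above and below on $\overline{\Omega}$, and a barrier of the form $f(p_0)+\psi\left(\dist_\euc(\cdot,\dhr\Omega)\right)$ must in addition be localized at $p_0$ (say on $\Omega\cap\Dbb_r(p_0)$, dominating $\sup|f|$ on $\Omega\cap\dhr\Dbb_r(p_0)$), since as written it does not lie above $f$ on all of $\dhr\Omega$; and (b) the boundary gradient estimate in the continuity method requires smooth boundary data, so for $f\in C^0(\dhr\Omega)$ you need an approximation step: solve for smooth $f_k\to f$, then pass to the limit using the compactness theorem (Theorem \ref{TC}) and your barriers for equicontinuity up to $\dhr\Omega$. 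Neither point is a fatal gap --- both are standard --- so your argument can be completed, but it is not the argument the paper gives.
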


\begin{proof}
The uniqueness is deduced by Maximum principle, Theorem \ref{PMG}.

\textbf{Existence:} Let $\alpha,\beta$ be two real numbers such that $\alpha<f(x)<\beta$,  $x\in\dhr\Omega$.
Since $\Omega\subset\hh^2$ is a bounded Euclidean mean convex domain, $M^3:=\overline{\Omega}\times[\alpha,\beta]$ is a manifold of dimension $3$, compact, and mean convex. Define a Jordan curve $\sigma\subset \dhr M^3$
$$\sigma=\{(x,f(x)):x\in\dhr\Omega \}.$$
By Theorem of Meeks-Yau(see \cite[Theorem 1]{MY82}, \cite[Theorem 6.28]{CM11}), there exists a minimal surface $\overline\Sigma$
$$\dhr\overline{\Sigma}=\sigma,\qquad \Sigma:=\overline{\Sigma}\setminus \sigma\subset \Omega\times[\alpha,\beta].$$
Then, it is sufficient to show that $\Sigma$ is a graph. Suppose the contrary, that $\Sigma$ is not a graph. There exists a point $p\in\Sigma$ such that $\dhr_t|_p\in T_p\Sigma$. By Corollary \ref{cor2.2} there exists a unique Euclidean geodesic $\gamma$ such that two minimal surfaces $\Sigma$ and $\gamma\times\rr$ are tangents at $P$.

Since $\Sigma$ is not invariant by translation along $\dhr_t$, both two surfaces $\Sigma$, $\gamma\times\rr$ are not coincide. By Theorem of local description for the Intersections of minimal surfaces \cite[Theorem 7.3]{CM11}, in a neighborhood of $P$, the intersection of $\Sigma$ and $\gamma\times\rr$ composed of $2m\, (m\ge 2)$ arcs meeting at $P$.

If there exists a cycle $\alpha$ in $\Sigma\cap\gamma\times\rr$	, then $\alpha$ is the boundary of a minimal disk in $\Sigma$.
Thus we could touch this disk at an interior point with another minimal surface $\beta\times\rr$, where $\beta$ is an Euclidean geodesic curve of $\hh^2$, but this can not happen by the maximum principle. 
So each branch of these curves leaving $p$ must go to $\dhr\Sigma$ and, as $\gamma\cap\dhr\Omega$ has
exactly two points, at least two of the branches go to the same point of $\dhr\Sigma$. This yields a compact cycle $\alpha$ in $\overline{\Sigma}\cap(\gamma\times\rr)$ and, by the same previous
argument, we have a contradiction.
\end{proof}

A  function $u\in C^0(\Omega)$ will be called subsolution (resp. supersolution) in 
$\Omega$ if for every disk $D\subset\subset\Omega$ and every function $h$ minimal solution in $D$ satisfying $u\le h$ (resp. $u\ge h$) on $\dhr D$, we also have $u\le h$ (resp. $u\ge h$) in $D$. We will have the following properties of $C^0(\Omega)$ subsolution.

\begin{cy}\label{tc}
\begin{itemize}
\item[\rm (i)] A function $u\in C^2(\Omega)$ is a subsolution if and only if $\Mcal u\ge 0$.
\item[\rm (ii)] If $u$ is subsolution in a domain $\Omega$ and if $v$ is supersolution in a bounded domain $\Omega$ with $v\ge u$ on $\dhr\Omega$, then  $v\ge u$ on $\Omega$. To prove the latter assertion, suppose the contrary. Then at some point $p_0\in\Omega$ we have 
$$(u-v)(p_0)=\sup_{\Omega}(u-v)=M\ge 0$$
and we may assume there is a disk $D=\Dbb(p_0)$ such that $u-v\not\equiv M$ on $\dhr D$. Denote by
$\overline{u},\overline{v}$ the minimal solutions  respectively equal to $u, v$ on $\dhr D$ by Theorem \ref{TEP}, 
one sees that 
$$M\ge \sup_{\dhr D}(\overline{u}-\overline{v})\ge (\overline{u}-\overline{v})(p_0)\ge (u-v)(p_0)=M$$
and hence the equality holds throughout. By the maximum principle for minimal solution it follows that $\overline{u}-\overline{v}\equiv M$ in $D$ and hence $u-v=M$ on $\dhr D$, which contradicts the choice of $D$.
\item[\rm (iii)] Let $u$ be subsolution in $\Omega$ and $D$ be a disk strictly contained in $\Omega$. Denote by $\overline{u}$ the minimal solution in $D$  satisfying 
$\overline{u} = u$ on $\dhr D$. We define in $\Omega$ the minimal solution lifting of $u$ (in $D$) by 
\begin{equation}
U(p)=\begin{cases}
\overline{u}(p),&p\in D\\
u(p),& p\in\Omega\setminus D.
\end{cases}
\end{equation}
Then the function $U$ is also subsolution in $\Omega$. Indeed, consider an arbitrary disk $D'\subset\subset\Omega$ and let $h$ be a minimal solution  in $D'$ satisfying $h\ge U$ on $\dhr D'$. Since $u\le U$ in $D'$ we have $u\le h$ in $D'$ and hence $U\le h$ in $D'\setminus D$.  Since $U$ is minimal solution in $D$, we have by the maximum principle $U\le h$ in $D \cap D'$. Consequently $U\le h$ in $D'$ and $U$ is subsolution in $\Omega$. 

\item[\rm (iv)] 
Let $u_1,u_2,\ldots,u_N$ be subsolution in $\Omega$. 
Then the function $u(p) =\max \{u_1(p),\ldots,u_N(p)\}$ is also subsolution in $\Omega$. 
This is a trivial consequence of the  definition of subsolution. 
Corresponding results for supersolution functions 
are obtained by replacing $u$ by $- u$ in properties (i), (ii), (iii) and (iv). 
\end{itemize}
\end{cy}

Now let $\Omega$ be bounded domain and $f $ be a bounded function on $\dhr\Omega$. A function $u\in C^0(\overline{\Omega})$ will be called a subfunction (resp. superfunction) relative to $f$ if $u$ is a subsolution (resp. supersolution) in $\Omega$ and $u\le f$ (resp. $u\ge f$) on $\dhr\Omega$. By  \ref{tc}(ii), every subfunction is less than or equal to every superfunction. In particular, constant functions $\le\inf_\Omega f $ (resp. $\ge\sup_\Omega f )$ 
are subfunctions (resp. superfunctions). Denote by $S_f $ the set of subfunctions relative to $f $. The basic result of the Perron method is contained in the following theorem. 

\begin{md}\label{Perron1}
The function $u(p)= \sup_{v\in S_f } v(p)$ is a  minimal solution in $\Omega$. 
\end{md}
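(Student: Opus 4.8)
The plan is to adapt the classical Perron method for the Laplace equation (as presented in Gilbarg--Trudinger) to the minimal surface operator $\Mcal$, using the existence result for the Dirichlet problem on disks (Theorem \ref{TEP}) as the local solvability tool that replaces the Poisson integral. The function $u(p)=\sup_{v\in S_f}v(p)$ is well-defined and bounded, since by Remark \ref{tc}(ii) every subfunction lies below every superfunction, and in particular $\inf_{\dhr\Omega}f\le u\le\sup_{\dhr\Omega}f$. The goal is to show that this upper envelope is in fact a minimal solution, i.e. that it satisfies $\Mcal u=0$ in $\Omega$.

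First I would fix an arbitrary point $p_0\in\Omega$ and a disk $D=\Dbb(p_0)$ with $\overline D\subset\subset\Omega$. By definition of the supremum, I choose a sequence $\{v_n\}\subset S_f$ with $v_n(p_0)\to u(p_0)$; replacing $v_n$ by $\max\{v_1,\ldots,v_n,\inf_{\dhr\Omega}f\}$ (still a subfunction by Remark \ref{tc}(iv)) I may assume the sequence is monotone increasing and uniformly bounded. Next I would pass to the minimal solution lifting $V_n$ of $v_n$ on $D$, as in Remark \ref{tc}(iii): each $V_n$ is again a subfunction, agrees with $v_n$ outside $D$, is a minimal solution inside $D$, and satisfies $v_n\le V_n\le u$. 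On $D$ the sequence $\{V_n\}$ is a uniformly bounded sequence of minimal solutions, so by the Compactness theorem (Theorem \ref{TC}) a subsequence converges on compact subsets of $D$ to a minimal solution $V$ on $D$, with $V\le u$ on $D$ and $V(p_0)=u(p_0)$.

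The main obstacle, and the heart of the proof, is to upgrade the inequality $V\le u$ on $D$ to the equality $V\equiv u$ on $D$, which then shows $u$ is a minimal solution near $p_0$ and hence everywhere. This is the standard ``the envelope is attained not just at one point'' argument. Suppose for contradiction that $V(q)<u(q)$ at some $q\in D$. Then there is a subfunction $w\in S_f$ with $w(q)>V(q)$; forming $\max\{w,V_n\}$, lifting it on $D$ to a subfunction $W_n$, and passing to a limit via Theorem \ref{TC} again, I obtain a second minimal solution $W$ on $D$ satisfying $V\le W\le u$, with $W(p_0)=u(p_0)=V(p_0)$ but $W(q)\ge w(q)>V(q)$. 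Thus $W-V\ge 0$ on $D$ with an interior minimum value $0$ attained at $p_0$, so by the Maximum principle (Proposition \ref{PM}) $W-V$ is constant, forcing $W\equiv V$ on $D$ and contradicting $W(q)>V(q)$. Therefore $V\equiv u$ on $D$, so $u$ is a minimal solution in a neighborhood of every interior point, hence $\Mcal u=0$ throughout $\Omega$.

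The two places that require genuine care, rather than routine transcription, are the legitimacy of the lifting construction for the nonlinear operator $\Mcal$ (guaranteed here by Remark \ref{tc}(iii) together with the solvability on mean convex disks from Theorem \ref{TEP}) and the replacement of harmonic compactness by Theorem \ref{TC}, whose hypotheses are exactly the local uniform boundedness that the comparison $\inf f\le V_n\le\sup f$ supplies. With those two substitutions in place, the Gilbarg--Trudinger scheme transfers verbatim, and no delicate estimate beyond the interior gradient bound already invoked in Theorem \ref{TC} is needed.
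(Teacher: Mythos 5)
Your proposal is correct and follows essentially the same route as the paper: a maximizing sequence at a point, replacement by minimal solution liftings (Remark \ref{tc}(iii)--(iv)), compactness via Theorem \ref{TC}, and then the contradiction argument that lifts $\max\{\overline u, V_{n_k}\}$ and applies the maximum principle (Proposition \ref{PM}) to force the limit envelope to agree with $u$ on the whole disk. The only cosmetic differences (making the sequence monotone, and spelling out why hyperbolic disks are admissible for Theorem \ref{TEP}) do not change the argument.
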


\begin{proof}
By the maximum principle any function $v\in S_f $ satisfies $v\le\sup_{\dhr\Omega}f $, so that $u$ is well defined. 
Let $q$ be an arbitrary fixed point of $\Omega$. By the definition of $u$, there exists a sequence $\{v_n\}\subset S_f $ such that $v_n(q)\to u(q)$. By replacing $v_n$ with $\max\{v_n,\inf f \}$, 
we may assume that the sequence $\{v_n\}$ is bounded. Now choose $R$ so that the disk $D=\Dbb_R(q)\subset\subset\Omega$ and define $V_n$ to be the minimal solution lifting of $v_n$ in $D$ according 
to (iii). 
Then $V_n\in S_f , V_n(q)\to u(q)$ and by Theorem \ref{TC} the sequence $\{V_n\}$ contains a subsequence $\{V_{n_k}\}$ converging uniformly in any disk $\Dbb_\rho(q)$ with $\rho<R$ to a function $v$ that is minimal solution in $D$. Clearly $v\le u$ in $D$ and $v(q)=u(q)$. We claim 
now that in fact $v=u$ in $D$. For suppose $v(\overline{q})<u(\overline{q})$ at some $\overline{q}\in D$. Then there exists a function $\overline{u}\in S_f $ such that $v(\overline{q})<\overline{u}(\overline{q})$. Defining $w_k=\max\{\overline{u},V_{n_k}\}$ and also the minimal solution liftings $W_k$ as in (iii), we obtain as before a subsequence of the sequence $\{W_k\}$ converging to a minimal solution function $w$ satisfying $v\le w\le u$ in $D$ and 
$v(q)=w(q)=u(q)$. But then by the maximum principle we must have $v=w$ in $D$. This contradicts the definition of $\overline{u}$ and hence $u$ is minimal solution in $\Omega$. 
\end{proof}

We will show the solution that we obtained (called the Perron solution) will be the solution of the Dirichlet  problem as follows.

\begin{dl}\label{TEG}
Let $\Omega$ be a bounded addmissible domain with $\{C_i\}$ the open arcs of $\dhr\Omega$. Let $f_i\in C^0(C_i)$ be bounded functions. Assume $C_i$ are Euclidean mean convex to $\Omega$ then there exists a unique minimal solution $u$  on $\Omega$ such that $u=f_i$ on $C_i$ for all $i$.
\end{dl}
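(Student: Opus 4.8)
The plan is to combine the Perron method, already set up in Proposition \ref{Perron1}, with a boundary barrier construction, while deducing uniqueness directly from the general maximum principle.

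\textbf{Uniqueness.} This is immediate from Theorem \ref{PMG}. If $u_1,u_2$ are two solutions with $u_1=u_2=f_i$ on every arc $C_i$, then for any approach to $\dhr\Omega$ away from the vertices one has both $\limsup(u_1-u_2)\le 0$ and $\limsup(u_2-u_1)\le 0$. Since the finitely many vertices are precisely the points excluded in Theorem \ref{PMG}, two applications give $u_1\le u_2$ and $u_2\le u_1$, hence $u_1=u_2$.

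\textbf{Existence.} I would set $f:=f_i$ on each $C_i$, form the family $S_f$ of subfunctions relative to $f$, and take the Perron solution $u=\sup_{v\in S_f}v$. By Proposition \ref{Perron1} this $u$ is a minimal solution in $\Omega$, and by the comparison in Remark \ref{tc}(ii) it is bounded, $\inf f\le u\le\sup f$. What remains is boundary regularity: for each interior point $p_0$ of an arc $C_i$ one must show $\lim_{p\to p_0}u(p)=f_i(p_0)$. I would handle this with local barriers built from Theorem \ref{TEP}. Fix $p_0$ and $\varepsilon>0$. Since $C_i$ is a Euclidean convex arc that is mean convex toward $\Omega$, the supporting Euclidean geodesic at $p_0$ (whose cylinder is minimal by Corollary \ref{cor2.2}) lets me choose a small bounded Euclidean mean convex $C^2$ domain $\Omega'$ with $p_0\in\dhr\Omega'$, whose boundary coincides with $C_i$ near $p_0$ and with $\Omega\cap B\subset\Omega'$ for a small Euclidean ball $B$ about $p_0$. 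Prescribing on $\dhr\Omega'$ the continuous data equal to $f_i$ near $p_0$ and a large constant $M$ elsewhere, Theorem \ref{TEP} yields a minimal solution $w^+$ on $\Omega'$. For $M$ large, every $v\in S_f$ satisfies $v\le w^+$ on $\dhr(\Omega\cap B)$ (namely $v\le f_i=w^+$ on the part lying in $C_i$, and $v\le\sup f\le M\le w^+$ on $\dhr B\cap\Omega$), so Remark \ref{tc}(ii) gives $v\le w^+$ in $\Omega\cap B$ and hence $u\le w^+$ there, while $w^+(p)\to f_i(p_0)$ as $p\to p_0$ and the neighborhood shrinks. A symmetric construction with $-M$ produces $w^-\le u$ with $w^-(p)\to f_i(p_0)$. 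Sandwiching then forces $\limsup_{p\to p_0}u(p)\le f_i(p_0)+\varepsilon$ and $\liminf_{p\to p_0}u(p)\ge f_i(p_0)-\varepsilon$, and letting $\varepsilon\to0$ gives the boundary value.

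\textbf{Main obstacle.} The crux is the barrier step, which is exactly where the Euclidean mean convexity hypothesis enters: it is what places the auxiliary domain $\Omega'$ (equivalently the supporting minimal cylinder of Corollary \ref{cor2.2}) on the correct side and produces the one-sided comparison; a non–mean-convex arc would fail here. Some care is also needed near the vertices where two arcs meet and near the removable ideal vertices on $\{y=0\}$, where the warped metric $\dsf s^2$ degenerates as $y\to0$. However, since those ideal vertices are removable points and not $\infty$, the ambient \emph{Euclidean} geometry stays regular and the barriers above are Euclidean in nature; moreover the conclusion is asserted only on the open arcs $C_i$, so no boundary value need be attained at the vertices, consistently with their exclusion in Theorem \ref{PMG}.
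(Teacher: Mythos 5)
Your overall strategy (Perron solution from Proposition \ref{Perron1}, uniqueness from Theorem \ref{PMG}, boundary regularity via local barriers supplied by Theorem \ref{TEP}) is exactly the paper's, and the uniqueness part is fine. However, the barrier comparison step contains a genuine error. You place the auxiliary domain \emph{outside} the comparison region, $\Omega\cap B\subset\Omega'$, prescribe the constant $M$ on $\dhr\Omega'$ away from $p_0$, and then claim $v\le\sup f\le M\le w^+$ on $\dhr B\cap\Omega$. The last inequality is false: $\dhr B\cap\Omega$ lies in the interior of $\Omega'$, and by the maximum principle $w^+\le M$ throughout $\Omega'$, with strict inequality in the interior (its boundary data near $p_0$ equal $f_i$, which is below $M$). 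So you have no lower bound at all for $w^+$ on the inner portion of $\dhr(\Omega\cap B)$, Remark \ref{tc}(ii) cannot be invoked on $\Omega\cap B$, and the inequality $v\le w^+$ on $\dhr B\cap\Omega$ that you assert is essentially the statement you are trying to prove.

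The repair is to reverse the inclusion, which is precisely what the paper does: take the $C^2$ Euclidean mean convex barrier domain \emph{inside}, $\Omega_\xi\subset\Omega\cap\Dbb_r(\xi)$, obtained by rounding the corners of $\Omega\cap\Dbb_r(\xi)$; it still contains a one-sided neighborhood of $\xi=p_0$ in $\Omega$. Prescribe continuous data on $\dhr\Omega_\xi$ so that $w_+(\xi)=f(\xi)$, $w_+\ge f$ on $\dhr\Omega_\xi\cap\dhr\Omega$, and $w_+\ge\sup f$ on $\dhr\Omega_\xi\cap\Omega$. Then every $v\in S_f$ satisfies $v\le w_+$ on \emph{all} of $\dhr\Omega_\xi$: on $\dhr\Omega_\xi\cap\dhr\Omega$ because $v\le f$, and on $\dhr\Omega_\xi\cap\Omega$ because $v\le\sup_{\dhr\Omega}f$ by the maximum principle for subfunctions. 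Now the sub/supersolution comparison applies on $\Omega_\xi$ itself, where both boundary inequalities are known, giving $u\le w_+$ on $\Omega_\xi$, and continuity of $w_+$ at $\xi$ yields $\limsup_{p\to\xi}u(p)\le f(\xi)$; the symmetric lower barrier $w_-$ completes the sandwich. With this correction (and noting that $\Omega_\xi\subset\Omega$ is needed so that the subfunctions are defined on it), your argument coincides with the paper's proof.
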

\begin{proof}
Let a function $f$ defined on $\dhr\Omega$ such that $f(p)=f_i(p)$ if $p\in C_i$.
Denote by $u$ the Perron solution relative to $\Mcal$ and $f$.
 Fix $\xi\in C_i$, for some $i$. We must prove that
\begin{equation}\label{2}
\lim_{p\in\Omega,p\to\xi}u(p)=f(\xi).
\end{equation}

We construct the local barrier at $\xi$ as follows.
For $r>0$ small enough, consider the domain $\Omega\cap \Dbb_r(\xi)$.  We approximate $\Omega\cap \Dbb_r(\xi)$ by $C^2$ (Euclidean mean convex) domain $\Omega_\xi\subset\Omega\cap \Dbb_r(\xi)$  by rounding each
corner point of $\Omega\cap B_r(\xi)$.
By Theorem \ref{TEP}, there exist minimal solutions  $w_{\pm}\in C^2(\Omega_\xi)\cap C^0(\overline{\Omega_\xi})$ on $\Omega_\xi$  such that  $w_\pm(\xi)=f(\xi)$ and 
$$\begin{cases}
w_-\le f\le w_+ & \text{on } \dhr\Omega_\xi\cap\dhr\Omega,\\
w_-\le\inf f\le \sup f\le w_+ &\text{on } \dhr\Omega_\xi\cap\Omega.
\end{cases}$$
From the definition of $u$ and the fact that every subfunction is dominated by every superfunction, we have
$$w_-\le u\le w_+,\qquad\text{on }\Omega_\xi,$$
 we obtain (\ref{2}).
\end{proof}

\section{A local Scherk surface in $\sol$ and Flux formula}\label{sec4}

\subsection{A local Scherk surface in $\sol$}

\begin{md}\label{scherk2}
Let $\Omega\subset\hh^2$ be an Euclidean mean convex quadrilateral domain whose boundary $\dhr\Omega$ is composed of two Euclidean geodesic arcs $A_1, A_2$ and two Euclidean geodesic arcs $C_1,C_2$. 
Suppose that 
\begin{equation}\label{cond0}
\ell_\euc(A_1)+\ell_\euc(A_2)< \ell_\euc(C_1)+\ell_\euc(C_2).
\end{equation}
Let $f_i$ be a positive continuous function on $C_i$, $i=1,2.$ Then there exists a minimal solution $u$ in $\Omega$ taking $+\infty$ on $A_i$ and $f_i$ on $C_i$.
\end{md}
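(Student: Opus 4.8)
The plan is to realize $u$ as the monotone limit of bounded minimal solutions, and to use the flux of the field $X_u=\dfrac{y\nabla u}{W}$ to keep that limit finite in the interior.

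First I would introduce, for each integer $n>\max_i\sup_{C_i}f_i$, the minimal solution $u_n$ on $\Omega$ with $u_n\equiv n$ on $A_1\cup A_2$ and $u_n=f_i$ on $C_i$. This solution exists and is unique by Theorem \ref{TEG}, since $\Omega$ is a bounded admissible domain whose arcs are Euclidean mean convex and the prescribed data are bounded and continuous on each arc. Because the data increase with $n$ on the $A_i$ and are fixed on the $C_i$, the general maximum principle (Proposition \ref{PMG}) gives $u_n\le u_{n+1}$, while the same principle gives $u_n\ge \min_i\inf_{C_i}f_i>0$. Thus $\{u_n\}$ is an increasing sequence of \emph{positive} minimal solutions, and the positivity (guaranteed by $f_i>0$) is exactly what lets me invoke the interior gradient estimate (Theorem \ref{EGI}), and hence the compactness theorem (Theorem \ref{TC}), as soon as a bound on compact subsets is in hand.

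The crux is to prove that $\{u_n\}$ stays uniformly bounded on compact subsets of $\Omega$, i.e. that its pointwise limit is finite in the interior. Here I would use the flux: from $\Mcal u_n=\Div(yX_{u_n})=0$ and the divergence theorem, $\int_{\dhr D}\vh{X_{u_n},\nu}\,\dsf s_\euc=0$ for every subdomain $D\subset\Omega$, with $\dsf s_\euc=y\,\dsf s$; moreover $\td{X_{u_n}}<1$, so $\bigl|\int_\gamma\vh{X_{u_n},\nu}\,\dsf s_\euc\bigr|\le\ell_\euc(\gamma)$ for any arc $\gamma$, and along a Euclidean geodesic on which $u_n\to+\infty$ the conormal component tends to $\pm1$, giving flux $\to\pm\ell_\euc$. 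If the limit were finite on all of $\Omega$, the flux over $\dhr\Omega$ would read $\ell_\euc(A_1)+\ell_\euc(A_2)+F(C_1)+F(C_2)=0$ with $|F(C_i)|<\ell_\euc(C_i)$, which is compatible precisely because \eqref{cond0} holds. Conversely, if blow-up occurred on a proper nonempty divergence set $U=\{p:\ u_n(p)\to+\infty\}$, then $\dhr U$ would consist of pieces of $A_1\cup A_2$ together with interior Euclidean geodesics, i.e. straight segments (Corollary \ref{cor2.2}), the interiors of the $C_i$ being excluded by the continuous-data barriers. Applying the flux balance to $U$ and letting $n\to\infty$, the $A$-pieces contribute $+\ell_\euc$ and the interior segments $-\ell_\euc$, forcing $2a_\euc(\dhr U)=\ell_\euc(\dhr U)$ for the divergence polygon $\dhr U$; this contradicts the strict inequality that \eqref{cond0} propagates to every polygon inscribed in the convex quadrilateral $\Omega$ (via the rigidity of straight segments sharing endpoints and the triangle inequality). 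Hence $U=\emptyset$. \textbf{This dichotomy is the main obstacle}, as it requires the limiting flux values on arcs of blow-up, the fact that $\dhr U$ meets $\dhr\Omega$ only along the $A_i$, and the conversion of \eqref{cond0} into the impossibility of $2a_\euc=\ell_\euc$.

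Granting the interior bound, Theorem \ref{TC} extracts a minimal solution $u=\lim_n u_n$ on $\Omega$ with $u\ge u_n$ for all $n$. On each $C_i$, the two-sided barriers $w_\pm$ constructed in the proof of Theorem \ref{TEG} sandwich the limit near interior points of $C_i$ ($w_-\le u_n\le u\le w_+$, with $w_\pm(\xi)=f_i(\xi)$), so $u=f_i$ on $C_i$. On each $A_i$, since $u\ge u_n$ and $u_n$ attains the value $n$ on $A_i$ with a lower barrier keeping it near $n$ close by, one gets $\liminf_{p\to A_i}u(p)\ge n$ for every $n$, i.e. $u\equiv+\infty$ on $A_i$. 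This produces the desired local Scherk solution.
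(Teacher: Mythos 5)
Your overall scheme (monotone exhaustion $u_n$, maximum principle, compactness) matches the paper's, but the crux of your argument --- ruling out a nonempty divergence set --- is circular within this paper's logical structure, and this is exactly the step you flag as ``the main obstacle'' without actually supplying it. You exclude blow-up near the $C_i$ ``by the continuous-data barriers'' and assert that the interior part of $\dhr U$ consists of Euclidean geodesic chords with the right limiting fluxes. In the paper, those facts are Lemma \ref{lem-droite0}, Corollary \ref{lem-droite} (straight line lemma), Theorem \ref{SD} (divergence set structure) and Theorem \ref{LVB}; every one of them is proved \emph{from} Proposition \ref{barrier}, which is in turn deduced \emph{from} Proposition \ref{scherk2} --- the very statement you are proving. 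Before \ref{scherk2} is established, the paper possesses no barrier whatsoever that bounds an increasing sequence of solutions near an edge carrying bounded data: the local barriers $w_\pm$ in the proof of Theorem \ref{TEG} pin down boundary continuity of a \emph{single} bounded solution, but to dominate the whole sequence $\{u_n\}$ near $C_i$ they would need, on $\dhr\Omega_\xi\cap\Omega$, precisely the uniform interior bound you are trying to prove. So the quadrilateral Scherk solution cannot be obtained by quoting the Jenkins--Serrin divergence-set machinery; it is the seed from which that machinery grows.

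The paper's own proof avoids all of this with a self-contained geometric barrier, which your proposal never touches. First it treats $f_1=f_2=0$: for $h$ large, the annulus $\mathfrak{A}=\Omega\cup T_h(\Omega)\cup\bigcup_i(A_i\times[0,h])$ with boundary $\Gamma_1\cup\Gamma_2=\dhr(C_1\times[0,h])\cup\dhr(C_2\times[0,h])$ has area $2\Area(\Omega)+h\bigl(\ell_\euc(A_1)+\ell_\euc(A_2)\bigr)$, which by hypothesis \eqref{cond0} is eventually smaller than the sum $h\bigl(\ell_\euc(C_1)+\ell_\euc(C_2)\bigr)$ of the areas of the two spanning disks; the Douglas criterion then yields a least-area minimal annulus $\mathfrak{A}(h)$, and this annulus is an upper barrier for every graph $\Gr(u_n)$, giving the uniform bound on compact subsets of $\Omega\cup C_1\cup C_2$. (Note this is where \eqref{cond0} actually enters --- as an area comparison, not as a polygon inequality fed to a flux argument.) The general case of positive $f_i$ then follows by comparing $u_n$, on a slightly shrunk quadrilateral $\Omega'$, with $w+\sum_i\max_{C_i'}f_i$, where $w$ is the case-one solution on $\Omega'$. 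If you wanted to salvage a flux-based proof, you would have to re-derive, independently of \ref{scherk2}, both the exclusion of the $C_i$ from the divergence set and the geodesic structure of its interior boundary (e.g.\ by extending Proposition \ref{lem2.5}(iv) beyond geodesic arcs and running lens-type flux arguments); as written, your proof assumes these.
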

This construction was motivated by \cite[Theorem 2]{NR02}.
\begin{proof}
This proof is divided into two cases.
\begin{trh} Case $f_1=f_2=0$.
\end{trh}
\begin{proof}
Let $n$ be a fixed positive number.
By Theorem \ref{TEG}, there exists a minimal  solution $u_n$ in $\Omega$ taking $n$ on $A_i$ and $0$ on $C_i$.
By General maximum principle (Theorem \ref{PMG}), $0\le u_n\le u_{n+1}$.\\
We will prove that the sequence $\{u_n\}$ is uniformly bounded on compact subsets $K$ of  $\Omega\cup C_1\cup C_2.$ \\
We first  construct  minimal annulus.

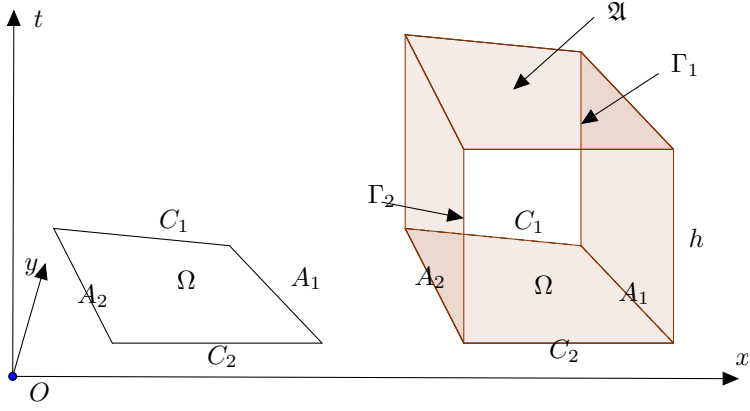
\begin{figure}[!h]
\centering

\definecolor{qqqqff}{rgb}{0,0,1}
\definecolor{zzttqq}{rgb}{0.6,0.2,0}
\begin{tikzpicture}[line cap=round,line join=round,>=triangle 45,x=0.7788161993769468cm,y=0.7627118644067798cm]
\clip(-4,-1.14) rectangle (8.84,5.94);
\fill[color=zzttqq,fill=zzttqq,fill opacity=0.1] (3,5.38) -- (4,3.38) -- (4,0) -- (3,2) -- cycle;
\fill[color=zzttqq,fill=zzttqq,fill opacity=0.1] (3,5.38) -- (6,5.08) -- (7.58,3.38) -- (4,3.38) -- cycle;
\fill[color=zzttqq,fill=zzttqq,fill opacity=0.1] (6,5.08) -- (6,1.7) -- (6,1.7) -- (7.58,0) -- (7.58,3.38) -- cycle;
\fill[color=zzttqq,fill=zzttqq,fill opacity=0.1] (3,2) -- (6,1.7) -- (7.58,0) -- (4,0) -- cycle;
\draw (-3,2)-- (-2,0);
\draw (-2,0)-- (1.58,0);
\draw (-3,2)-- (0,1.7);
\draw (0,1.7)-- (1.58,0);
\draw (3,2)-- (4,0);
\draw [dash pattern=on 3pt off 3pt] (3,2)-- (6,1.7);
\draw (4,0)-- (7.58,0);
\draw [dash pattern=on 3pt off 3pt] (6,1.7)-- (7.58,0);
\draw (3,5.38)-- (4,3.38);
\draw (3,5.38)-- (6,5.08);
\draw (6,5.08)-- (7.58,3.38);
\draw (4,3.38)-- (7.58,3.38);
\draw [color=zzttqq] (3,5.38)-- (4,3.38);
\draw [color=zzttqq] (4,3.38)-- (4,0);
\draw [color=zzttqq] (4,0)-- (3,2);
\draw [color=zzttqq] (3,2)-- (3,5.38);
\draw [color=zzttqq] (3,5.38)-- (6,5.08);
\draw [color=zzttqq] (6,5.08)-- (7.58,3.38);
\draw [color=zzttqq] (7.58,3.38)-- (4,3.38);
\draw [color=zzttqq] (4,3.38)-- (3,5.38);
\draw [color=zzttqq] (6,5.08)-- (6,1.7);
\draw [color=zzttqq] (6,1.7)-- (6,1.7);
\draw [color=zzttqq] (6,1.7)-- (7.58,0);
\draw [color=zzttqq] (7.58,0)-- (7.58,3.38);
\draw [color=zzttqq] (7.58,3.38)-- (6,5.08);
\draw [color=zzttqq] (3,2)-- (6,1.7);
\draw [color=zzttqq] (6,1.7)-- (7.58,0);
\draw [color=zzttqq] (7.58,0)-- (4,0);
\draw [color=zzttqq] (4,0)-- (3,2);
\draw (0.88,1.48) node[anchor=north west] {$A_1$};
\draw (-2.76,1.18) node[anchor=north west] {$A_2$};
\draw (-1.36,2.48) node[anchor=north west] {$C_1$};
\draw (-0.54,0.12) node[anchor=north west] {$C_2$};
\draw (6.48,1.22) node[anchor=north west] {$A_1$};
\draw (4.7,2.46) node[anchor=north west] {$C_1$};
\draw (5.3,0.22) node[anchor=north west] {$C_2$};
\draw (3,1.5) node[anchor=north west] {$A_2$};
\draw (7.68,2.16) node[anchor=north west] {$h$};
\draw (7.38,5.2) node[anchor=north west] {$\Gamma_1$};
\draw (2.2,2.92) node[anchor=north west] {$\Gamma_2$};
\draw (6.3,6.12) node[anchor=north west] {$\mathfrak{A}$};
\draw [->] (2.6,2.46) -- (4,2.18);
\draw [->] (7.32,4.7) -- (6,3.82);
\draw [->] (6.22,5.68) -- (4.84,4.42);
\draw (-1.06,1.42) node[anchor=north west] {$\Omega$};
\draw (5.04,1.34) node[anchor=north west] {$\Omega$};
\draw [->] (-3.7,-0.58) -- (-3.68,5.82);
\draw [->] (-3.7,-0.58) -- (-3.14,1.4);
\draw (-3.58,-0.54) node[anchor=north west] {$O$};
\draw (8.48,-0.02) node[anchor=north west] {$x$};
\draw (-3.5,5.96) node[anchor=north west] {$t$};
\draw (-3.66,1.64) node[anchor=north west] {$y$};
\draw [->] (-3.7,-0.58) -- (8.7,-0.62);
\begin{scriptsize}
\draw [fill=qqqqff] (-3.7,-0.58) circle (1.5pt);
\end{scriptsize}
\end{tikzpicture}
\caption{Annulus  $\mathfrak{A}$ }\label{F3}
\end{figure}

Fix $h\in\rr,h>0$ and let $\Gamma_i$ be the curves that are the boundary of $C_i\times[0,h]$. Let $\Sigma_i^h$ be a minimal disk with boundary $\Gamma_i$. Then
$$\Area(\Sigma_i^h) = \Area(C_i\times[0,h])= h\cdot\ell_{\euc}(C_i).$$
Consider the annulus $\mathfrak{A}$ with boundary $\Gamma_1\cup\Gamma_2$ (see Figure \ref{F3}):
$$\mathfrak{A}=\Omega\cup T_h(\Omega)\cup\bigcup_{i=1}^2(A_i\times[0,h])$$ where $T_h$ is defined by (\ref{equ2.5}). Then
$$\Area(\mathfrak{A} )=2\Area(\Omega)+h(\ell_{\euc}(A_1)+\ell_{\euc}(A_2)).$$
Therefore
\begin{align*}
\Area(\mathfrak{A} )-\left(\Area(\Sigma_1^h)+\Area(\Sigma_2^h)\right)\le & 2\Area(\Omega)+h(\ell_{\euc}(A_1)+\ell_{\euc}(A_2)\\
&-\ell_{\euc}(C_1)-\ell_{\euc}(C_2)).
\end{align*}
By the hypothesis (\ref{cond0}), $\Area(\mathfrak{A} )-\left(\Area(\Sigma_1^h)+\Area(\Sigma_2^h)\right)<0$
if $h\ge h_0$ where $h_0$ is sufficiently large.
Hence, $\Area(\mathfrak{A})$ is smaller than the sum of the areas of the disks $C_i\times[0,h]$, and by the Douglas criteria \cite{Jos85}, there exists a least area minimal annulus $\mathfrak{A}(h)$ with boundary $\Gamma_1\cup \Gamma_2$ for all $h\ge h_0$.

\begin{kd}
The annulus $\mathfrak{A}(h)$ is an upper barrier for the sequence $\{\Gr(u_n)\}$  for all $n>0$ and $h\ge h_0$. Moreover, the vertical projections of the annulus $\mathfrak{A}(h)$ is an exhaustion for $\Omega\cup C_1\cup C_2$.
\end{kd}
\begin{proof}
For the proof we refer the reader to \cite[page 271, 272]{NR02} or \cite[page 126, 127]{Pin09}.
\renewcommand{\qedsymbol}{$\Diamond$}
\end{proof}
In this assertion we conclude that the sequence $\{u_n\}$ is uniformly bounded on compact subsets of $\Omega\cup C_1\cup C_2.$ 
By the compactness (Theorem \ref{TC}), the sequence $\{u_n\}$ converges on compact subsets of $\Omega$ to a minimal solution $u$ on $\Omega$ which assumes the above prescribed boundary values on $\dhr\Omega$.
\renewcommand{\qedsymbol}{$\triangle$}
\end{proof}

\begin{trh} General case.
\end{trh}
\begin{proof}
For every $n>0$, by applying Theorem \ref{TEG}, there exists a minimal solution $u_n$ on $\Omega$ with boundary values
$$u_n|_{A_i}=n,\qquad u_n|_{C_i}=\min\{n,f_i\}.$$
By Maximum principle (Theorem \ref{PMG}), $u_n\le u_{n+1}$.

\begin{kd}
The sequence $u_n$ is uniformly bounded on every compact subset $K$ of $\Omega\cup C_1\cup C_2.$
\end{kd}
\begin{proof}
Denote by $K$ a compact subset of $\Omega\cup C_1\cup C_2$. Then $\varepsilon:=\dist(K,A_1\cup A_2)>0.$ We define a subdomain $\Omega'$ of $\Omega$ by the formula
$$\Omega'=\left\{p\in\Omega:\dist(p,A_1\cup A_2)>\frac{\varepsilon}{2} \right\}.$$
Let us denote $C'_i=C_i\cap\dhr\Omega'$ and $A'_1\cup A'_2=A':=\Omega\cap\dhr\Omega'$. (See Figure \ref{F4}).
It follows from the definition that $K$ is a compact subset of $\Omega'\cup C'_1\cup C'_2$.
There is, by the previous case, a minimal solution $w$ on $\Omega'$ which obtain the values $+\infty$ on $A'_i$ and $0$ on $C'_i$.


\begin{figure}[!h]
\centering
\definecolor{yqyqyq}{rgb}{0.5,0.5,0.5}
\definecolor{dcrutc}{rgb}{0.86,0.08,0.24}
\begin{tikzpicture}[line cap=round,line join=round,>=triangle 45,x=1.0cm,y=1.0cm]
\clip(-0.82,1.12) rectangle (4.78,5.52);
\fill[color=dcrutc,fill=dcrutc,fill opacity=0.1] (1.26,3.46) -- (1.74,3.92) -- (2.42,3.58) -- (2.4,2.94) -- (1.62,2.86) -- cycle;
\fill[color=yqyqyq,fill=yqyqyq,fill opacity=0.1] (0.61,4.85) -- (0.64,2.08) -- (3.4,2.43) -- (3.39,4.15) -- cycle;
\draw (0,5)-- (0,2);
\draw (0,2)-- (4,2.5);
\draw (4,2.5)-- (4,4);
\draw (4,4)-- (0,5);
\draw [color=dcrutc] (1.26,3.46)-- (1.74,3.92);
\draw [color=dcrutc] (1.74,3.92)-- (2.42,3.58);
\draw [color=dcrutc] (2.42,3.58)-- (2.4,2.94);
\draw [color=dcrutc] (2.4,2.94)-- (1.62,2.86);
\draw [color=dcrutc] (1.62,2.86)-- (1.26,3.46);
\draw [color=yqyqyq] (0.61,4.85)-- (0.64,2.08);
\draw [color=yqyqyq] (0.64,2.08)-- (3.4,2.43);
\draw [color=yqyqyq] (3.4,2.43)-- (3.39,4.15);
\draw [color=yqyqyq] (3.39,4.15)-- (0.61,4.85);
\draw (2.08,5.62) node[anchor=north west] {$C_1$};
\draw (1.94,1.82) node[anchor=north west] {$C_2$};
\draw (1.72,3.78) node[anchor=north west] {$K$};
\draw (1.02,4.44) node[anchor=north west] {$\Omega'$};
\draw (4.02,3.52) node[anchor=north west] {$A_1$};
\draw (-0.6,3.84) node[anchor=north west] {$A_2$};
\draw (3.08,3.82) node[anchor=north west] {$A'_1$};
\draw (0.32,4) node[anchor=north west] {$A'_2$};
\draw (1.96,4.9) node[anchor=north west] {$C'_1$};
\draw (1.94,2.56) node[anchor=north west] {$C'_2$};
\end{tikzpicture}
\caption{The domain $\Omega'$}\label{F4}
\label{fig4}
\end{figure}
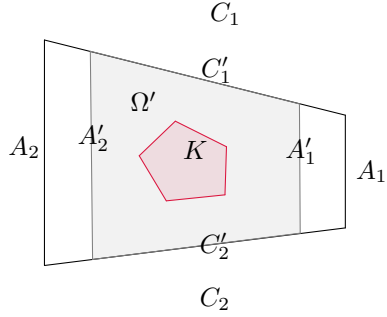

By the general maximum principle (Theorem \ref{PMG}), we have $0\le u_n\le w+\sum_{i=1}^{2}\max_{C'_i}f_i$ on $\Omega'\cup C'_1\cup C'_2$. Since $K$ is a compact of $\Omega'\cup C'_1\cup C'_2$, $\{u_n\}$ is uniformly bounded on $K$.
\renewcommand{\qedsymbol}{$\Diamond$}
\end{proof}
It follows from the previous affirmation and the compactness theorem (Theorem \ref{TC}) that, the sequence $\{u_n\}$ converges on each compact subset of $\Omega\cup C_1\cup C_2$ to a solution $u$ on $\Omega$. Moreover, we have $u|_{C_i}=\lim_n u_n|_{C_i}=f_i$ and $u|_{A_i}=\lim_n u_n|_{A_i}=+\infty.$  This completes the proof.
\renewcommand{\qedsymbol}{$\triangle$}
\end{proof}

\end{proof}

\begin{md}\label{barrier}
Let $\Omega\subset\hh^2$ be a bounded domain whose boundary $\dhr\Omega$ is composed of an Euclidean geodesic arc  $A$ and an Euclidean convex arc $C$ with their endpoints. Then, there exists a minimal solution $u$ in $\Omega$ taking $+\infty$ on $A$ and arbitrarily positive continuous function $f$ on $C$.
\end{md}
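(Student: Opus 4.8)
The plan is to run the same monotone approximation scheme used in Proposition \ref{scherk2}, producing $u$ as an increasing limit of solutions with finite boundary data, and then to feed the local Scherk surface of Proposition \ref{scherk2} back in as the upper barrier. Since $A$ is an Euclidean geodesic and $C$ an Euclidean convex arc, the domain $\Omega$ is Euclidean mean convex, so Theorem \ref{TEG} provides for each integer $n>0$ a minimal solution $u_n$ on $\Omega$ with $u_n=n$ on $A$ and $u_n=f$ on $C$ (one prescribes $\min\{n,f\}$ on $C$ if $f$ is unbounded). By the general maximum principle (Theorem \ref{PMG}) the sequence is monotone, $0<u_n\le u_{n+1}$, so it suffices to bound $\{u_n\}$ from above, uniformly on compact subsets of $\Omega$, and then invoke the compactness theorem (Theorem \ref{TC}).

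The heart of the matter is a single upper barrier $v$ defined on all of $\Omega$. I would inscribe $\Omega$ in a large Euclidean mean convex quadrilateral $\mathcal{Q}\supset\Omega$ whose boundary consists of the geodesic arc $A_1=A$, a second short Euclidean geodesic arc $A_2$ placed far from $A$ on the side containing $C$, and two further Euclidean geodesic arcs $C_1,C_2$ joining the endpoints of $A$ to those of $A_2$; the arc $C$ then lies in the interior of $\mathcal{Q}$, except for its two endpoints, which sit on $A_1$. Pushing $A_2$ away from $A$ makes $\ell_\euc(C_1)+\ell_\euc(C_2)$ as large as we like while keeping $\ell_\euc(A_2)$ small, so the Scherk condition $\ell_\euc(A_1)+\ell_\euc(A_2)<\ell_\euc(C_1)+\ell_\euc(C_2)$ of Proposition \ref{scherk2} can be arranged. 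That proposition then yields a minimal solution $v$ on $\mathcal{Q}$ equal to $+\infty$ on $A_1\cup A_2$ and to the constant $M:=\max_C f$ on $C_1\cup C_2$. Comparing $v$ with the constant solution $M$ through Theorem \ref{PMG} gives $v\ge M\ge f$ everywhere, in particular on $C$; hence on $\dhr\Omega$ we have $\limsup(u_n-v)\le 0$ away from the two vertices, and a second application of Theorem \ref{PMG} on the admissible domain $\Omega$ gives $u_n\le v$ on $\Omega$ for every $n$. As $v$ is finite on compact subsets of $\mathcal{Q}$ disjoint from $A$, this is exactly the required uniform bound. I expect the genuine difficulty to be precisely this geometric step: realizing $\mathcal{Q}$ with the four prescribed sides, the length inequality, mean convexity, and $\Omega\subset\mathcal{Q}$ all at once.

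With the bound secured, Theorem \ref{TC} furnishes a subsequence converging on compact subsets of $\Omega$ to a minimal solution $u$ satisfying $u_n\le u\le v$, and the boundary behaviour is read off by squeezing. On $A$, since $u\ge u_n$ and $u_n$ attains the value $n$ continuously on $A$, one gets $\liminf_{p\to\xi}u(p)\ge n$ for every $n$, so $u\equiv+\infty$ on $A$. On the open arc $C$, the inequality $u\ge u_1$ (with $u_1=f$ on $C$) gives $\liminf_{p\to\xi}u(p)\ge f(\xi)$, while the reverse inequality follows from a local upper barrier at each $\xi\in C$: because $u_n\le v$ is bounded near the interior of $C$, one solves by Theorem \ref{TEG} a Dirichlet problem on a small mean convex piece $\Omega\cap\Dbb_r(\xi)$ with data $f$ along $C$ and a suitable constant along the interior part of its boundary, obtaining a supersolution $\psi$ with $\psi(\xi)=f(\xi)$ and $u_n\le\psi$, whence $\limsup_{p\to\xi}u(p)\le f(\xi)$. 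Combining the two estimates yields $u=f$ on $C$, which completes the construction.
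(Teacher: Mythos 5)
Your proposal follows the paper's general scheme (the monotone sequence $u_n$ with data $n$ on $A$ and truncated $f$ on $C$, a Scherk quadrilateral from Proposition~\ref{scherk2} as upper barrier, then Theorem~\ref{TC}), and you rightly flag the construction of $\mathcal{Q}$ as the crux; but that step is not merely delicate, it is impossible in general, so the proof has a genuine gap. Two problems. First, ``pushing $A_2$ far away while keeping $\ell_\euc(A_2)$ small'' is incompatible with $\Omega\subset\mathcal{Q}$: a quadrilateral whose far side is short tapers, and its straight lateral sides then cut into any $\Omega$ whose width at positive depth is comparable to $\ell_\euc(A)$; if instead the lateral sides open up so as to contain $\Omega$, then $\ell_\euc(A_2)$ grows at the same linear rate as $\ell_\euc(C_1)+\ell_\euc(C_2)$. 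Second, and decisively, everything must lie inside $\hh^2=\{y>0\}$, on the side of $A$ containing $\Omega$, and there may be no room there. Take $A$ the horizontal segment from $(0,1)$ to $(10,1)$ and let $C$ descend from the endpoints of $A$ nearly vertically to height $1/10$ and run across: this $\Omega$ satisfies all hypotheses of the Proposition. Since a convex quadrilateral is the intersection of the half-planes of its sides, containment of $\Omega$ forces the two sides adjacent to $A$ to be nearly vertical, hence of Euclidean length about $1$ each, while the fourth side must pass beneath $\Omega$ and above $\{y=0\}$, hence has length at least about $10$. Thus $\ell_\euc(A_1)+\ell_\euc(A_2)\approx 20$ while $\ell_\euc(C_1)+\ell_\euc(C_2)\approx 2$, and the inequality required by Proposition~\ref{scherk2} fails by an order of magnitude for \emph{every} admissible $\mathcal{Q}$. (A further independent obstruction: if $C$ meets $A$ tangentially at an endpoint, no convex quadrilateral having $A$ itself as a side contains $\Omega$ near that vertex.)

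The paper's proof is shaped precisely to avoid this. Its quadrilateral does \emph{not} have $A$ as a side: for each compact $K\subset\Omega\cup C$ it uses a quadrilateral whose $+\infty$-side is a long Euclidean chord crossing $\Omega$ strictly between $K$ and $A$ and extending horizontally far beyond $\Omega$, with a short opposite side deep inside the strip above $\{y=0\}$. Such a tapering trapezoid can satisfy the length condition (for a symmetric trapezoid with parallel sides $a_1,a_2$ and height $h$ it reads $a_1a_2<h^2$, so one takes $a_2$ tiny and $a_1$ as long as needed) while containing $\Omega\cap\mathcal{Q}$, and the maximum principle is applied on $\Omega\cap\mathcal{Q}$ only, yielding a bound on $K$: a local barrier for each compact set, instead of your single global one. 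This localization also repairs a second gap in your argument: the statement allows $f$ to be merely positive and continuous on the open arc $C$, hence possibly unbounded near its endpoints (natural, since the adjacent datum on $A$ is $+\infty$); your constant $M=\max_C f$ is then infinite and your barrier vacuous, whereas the paper's comparison only ever adds $\max_{C'}f$ over a compact sub-arc $C'\subset C$, which is always finite.
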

\begin{proof}
For every $n>0$, by applying Theorem \ref{TEG}, there is a minimal solution $u_n$ on $\Omega$ with boundary values
$$u_n|_{A}=n,\qquad u_n|_{C}=\min\{n,f\}.$$
By Maximum principle, Theorem \ref{PMG}, $0\le u_n\le u_{n+1}$ for every $n$.

\begin{kd}
The sequence $\{u_n\}$ is uniformly bounded on every compact subset $K$ of $\Omega\cup C.$
\end{kd}
\begin{proof}

Denote by $K$ a compact subset of $\Omega\cup C$. Then $\varepsilon:=\dist(K,A)>0.$ We define a subdomain $\Omega'$ of $\Omega$ by the formula
$$\Omega'=\left\{p\in\Omega:\dist(p,A)>\frac{\varepsilon}{2} \right\}.$$
Let us denote $C'=C\cap\dhr\Omega'$ and $A'=\Omega\cap\dhr\Omega'$. (See Figure \ref{F5}).
It follows from the definition that $K$ is a compact subset of $\Omega'\cup C'$.
By Theorem \ref{scherk2}, there is a quadrilateral and a minimal solution $w$ defined on this quadrilateral that the values of $w$ on its boundary are $+\infty$ and $0$ (see Figure \ref{fig5}). 


\begin{figure}[!h]
\centering

\definecolor{wqwqwq}{rgb}{0.38,0.38,0.38}
\definecolor{ffqqff}{rgb}{1,0,1}
\begin{tikzpicture}[line cap=round,line join=round,>=triangle 45,x=1.0cm,y=1.0cm]
\clip(-2.34,-0.26) rectangle (7.32,5.4);
\fill[color=ffqqff,fill=ffqqff,fill opacity=0.1] (2,4) -- (3.3,3.74) -- (3,3) -- (2,2.74) -- (1.18,3.38) -- cycle;
\fill[color=wqwqwq,fill=wqwqwq,fill opacity=0.1] (-2,4.52) -- (7,4.48) -- (4.06,0.16) -- (2.32,0.16) -- cycle;
\draw [shift={(2.36,3.74)}] plot[domain=-3.89:0.73,variable=\t]({1*1.85*cos(\t r)+0*1.85*sin(\t r)},{0*1.85*cos(\t r)+1*1.85*sin(\t r)});
\draw (1,5)-- (3.74,4.98);
\draw [color=ffqqff] (2,4)-- (3.3,3.74);
\draw [color=ffqqff] (3.3,3.74)-- (3,3);
\draw [color=ffqqff] (3,3)-- (2,2.74);
\draw [color=ffqqff] (2,2.74)-- (1.18,3.38);
\draw [color=ffqqff] (1.18,3.38)-- (2,4);
\draw (2.1,3.8) node[anchor=north west] {$K$};
\draw (2.16,5.48) node[anchor=north west] {$A$};
\draw (2.34,2.04) node[anchor=north west] {$C$};
\draw (3.18,4.74) node[anchor=north west] {$A'$};
\draw (0.86,4.32) node[anchor=north west] {$\Omega'$};
\draw (5.3,2.28) node[anchor=north west] {$0$};
\draw (-0.2,2.54) node[anchor=north west] {$0$};
\draw (4.94,4.9) node[anchor=north west] {$\infty$};
\draw [color=wqwqwq] (-2,4.52)-- (7,4.48);
\draw [color=wqwqwq] (7,4.48)-- (4.06,0.16);
\draw [color=wqwqwq] (4.06,0.16)-- (2.32,0.16);
\draw [color=wqwqwq] (2.32,0.16)-- (-2,4.52);
\draw (3.28,0.16) node[anchor=north west] {$\infty$};
\end{tikzpicture}
\caption{The subdomain $\Omega'$ of $\Omega$}\label{F5}
\label{fig5}
\end{figure}
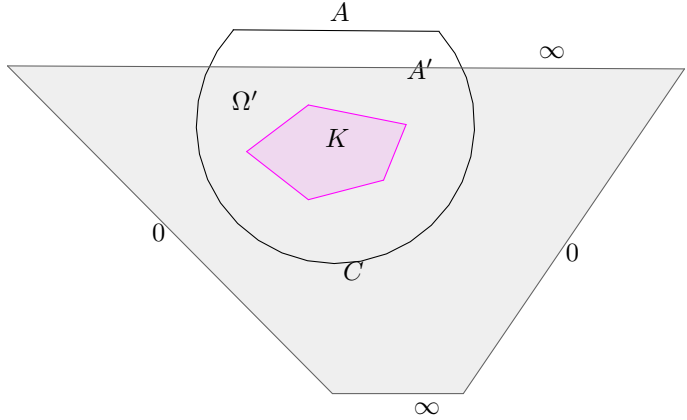

By General maximum principle (Theorem \ref{PMG}), we have $0\le u_n\le w+\max_{C'}f$ on $\Omega'\cup C'$. Since $K$ is a compact subset of $\Omega'\cup C'$, $\{u_n\}$ is uniformly bounded on $K$.
\renewcommand{\qedsymbol}{$\Diamond$}
\end{proof}
It follows from the previous affirmation, the compactness theorem \ref{TC} and the monotonicity of the sequence $\{u_n\}$, that the sequence $\{u_n\}$ converges on every compact subset of $\Omega\cup C$ to a minimal solution $u$ on $\Omega$. Moreover, we have $u|_C=\lim_n u_n|_C=f$ and $u|_A=\lim_n u_n|_A=+\infty.$ This completes the proof.
\end{proof}

\begin{bd}\label{lem-droite0}
Let $\Omega\subset\hh^2$ be a bounded domain whose boundary $\dhr\Omega$ is composed of an Euclidean geodesic arc  $A$ and an open Euclidean convex arc $C$ with their endpoints. Let $K$ be a compact subset of $\Omega\cup C$.  There exists a real number $M=M(K)$ such that if $u$ is a minimal solution on $\Omega$ that satisfies $u\ge c$ (resp. $u\le c$) on $C$ ($c$ is some real number), then $u\ge c-M$ (resp. $u\le c+M$) on $K$.
\end{bd}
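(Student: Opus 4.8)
The plan is to dominate $u$ on $K$ by a Scherk-type barrier whose \emph{infinite} boundary values are placed on an interior geodesic arc separating $K$ from $A$; the whole point is that this lets us avoid controlling the behaviour of $u$ along the true side $A$. Since $\Mcal(-u)=-\Mcal u$, the function $-u$ is again a minimal solution, so it suffices to prove the upper estimate: if $u\le c$ on $C$, then $u\le c+M$ on $K$. The lower estimate will then follow at once by applying this to $-u$ with the constant $-c$ (if $u\ge c$ on $C$ then $-u\le -c$ on $C$, whence $-u\le -c+M$, i.e. $u\ge c-M$, on $K$).

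Set $\varepsilon=\dist(K,A)>0$; as $K$ is compact this also forces a positive Euclidean distance between $K$ and $A$. Recalling that an Euclidean geodesic of $\hh^2$ is a straight Euclidean segment, I take $A'$ to be a straight chord of $\Omega$ parallel to $A$, at Euclidean distance from $A$ small enough that $K$ lies strictly on the $C$-side of $A'$. This $A'$ cuts off a bounded sub-domain $\Omega'\subset\Omega$ whose boundary is composed of the Euclidean geodesic arc $A'$ and a sub-arc $C'$ of the convex arc $C$ together with their endpoints, and with $K\subset\Omega'\cup C'$. Applying Proposition \ref{barrier} to $\Omega'$ (geodesic side $A'$, convex side $C'$) with the positive continuous boundary function $\equiv 1$, I obtain a minimal solution $w$ on $\Omega'$ with $w=+\infty$ on $A'$ and $w=1$ on $C'$; in particular $w$ is finite and continuous on $\Omega'\cup C'$. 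Since $K$ is a compact subset of $\Omega'\cup C'$ disjoint from $A'$, the number $M:=\max_K w$ is finite and depends only on $K$ through this fixed barrier, not on $u$ or $c$.

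Next I compare $u$ with the shifted barrier $w+c$ on $\Omega'$ via the general maximum principle, Theorem \ref{PMG}; note that $w+c$ is again a minimal solution because adding a constant does not change $\nabla u$, hence leaves $\Mcal$ unchanged. Along the interior chord $A'\subset\Omega$ the solution $u$ is of class $C^2$, hence finite, while $w+c\to+\infty$, so $\limsup\bigl(u-(w+c)\bigr)=-\infty$ there; along $C'$ one has $u\le c<1+c=w+c$, so $\limsup\bigl(u-(w+c)\bigr)\le -1<0$. The only exceptional boundary points are the two endpoints of $A'$, which are exactly the vertices of $\Omega'$ that Theorem \ref{PMG} permits us to ignore. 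Hence $u\le w+c$ throughout $\Omega'$, and restricting to $K$ gives $u\le \max_K w + c = c+M$.

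The decisive point, and the only genuine obstacle, is the placement of the barrier. By making its infinite side the interior chord $A'$ rather than $A$ itself, the boundary comparison on that side becomes automatic—an interior minimal solution is finite on $A'$ while the barrier is $+\infty$—so no estimate on the blow-up rate of $u$ along the actual geodesic side $A$ is ever required. The remaining points (that such a chord $A'$ can be chosen and that $\Omega'$ has the form demanded by Proposition \ref{barrier}) are elementary consequences of $A$ being a straight Euclidean segment and $C$ being convex, and the uniformity of $M$ is immediate since the barrier $w$ is fixed independently of $u$ and $c$.
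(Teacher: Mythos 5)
Your proof is correct and follows essentially the same route as the paper's: cut off a subdomain $\Omega'$ whose inner boundary is an interior Euclidean geodesic arc $A'$ separating $K$ from $A$, invoke Proposition \ref{barrier} to build a barrier $w$ equal to $+\infty$ on $A'$ and constant on the sub-arc $C'\subset C$, take $M=\sup_K w$, and compare $u$ with the shifted barrier via the general maximum principle (Theorem \ref{PMG}). The only differences are cosmetic: the paper runs both inequalities in parallel instead of reducing to one via $u\mapsto -u$, and it assigns the barrier the value $0$ on $C'$ where you use $1$ (your choice is in fact more consistent with the positivity hypothesis in Proposition \ref{barrier}).
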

\begin{proof} Since $K$ is a compact set of $\Omega\cup C$, $\varepsilon:=\dist(A,K)>0$. Define $\Omega'=\{x\in\Omega:\dist(x,A)>\frac{\varepsilon}{2}\}$. We have $\dhr\Omega'=A'\cup C'$ where $A':=\dhr\Omega'\cap\Omega$ an Euclidean geodesic arc and $C':=\dhr\Omega'\cap C$ a sub-arc of $C$. It follows from definition that $K$ is a compact set of $\Omega'\cup C'$.

By Proposition \ref{barrier}, there exists a minimal solution $w$ on $\Omega'$ such that $w|_{A'}=+\infty$ and $w|_{C'}=0$.  Define $M=\sup_Kw<\infty$, by the general maximum theorem, Theorem \ref{PMG}, we have $u\ge c-w$ (resp. $u\le c+w$) on $\Omega'$. So, $u\ge c-M$ (resp. $u\le c+M$) on $K$. This completes the proof.
\end{proof}

\begin{hq}[Straight line lemma]\label{lem-droite}
Let $\Omega\subset\hh^2$  be a domain, let $C\subset\dhr\Omega$ be an Euclidean mean convex arc (convex towards $\Omega$) and $u$ be a minimal solution in $\Omega$. If $u$ diverges to $+\infty$ or $-\infty$ as one approach $C$ within $\Omega$, then $C$ is an Euclidean geodesic arc.
\end{hq}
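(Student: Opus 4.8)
The plan is to argue by contradiction, reducing everything to a single local flux estimate built from the divergence form $\Mcal u=\Div(yX_u)=0$ of the minimal surface equation \eqref{MSE}. Since $-u$ is again a minimal solution and $\Mcal(-u)=-\Mcal u$, I may assume throughout that $u\to+\infty$ as one approaches $C$. Suppose, for contradiction, that $C$ is \emph{not} an Euclidean geodesic arc. Then $C$ has nonzero Euclidean curvature at some point $p$, and since $C$ is mean convex towards $\Omega$, by continuity there is a short subarc $C_0\subset C$ around $p$ on which the Euclidean curvature is bounded below by a positive constant. Let $A_0$ be the Euclidean geodesic (straight) chord joining the endpoints $q_1,q_2$ of $C_0$. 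Because $C_0$ is convex towards $\Omega$, for $C_0$ short enough the lens-shaped region $\Omega_0$ bounded by $C_0$ and $A_0$ is contained in $\Omega$, and strict convexity yields the \emph{strict} inequality $\ell_\euc(A_0)<\ell_\euc(C_0)$.

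The quantity that does the work is the flux $\int_\beta\vh{yX_u,\nu}\,\dsf s$ of an arc $\beta$, computed in the hyperbolic metric with unit normal $\nu$, exactly as in the divergence computation of $J_n$ in the proof of Theorem \ref{PMG}. Two facts drive the argument. First, $\td{X_u}<1$ gives the universal bound $|\vh{yX_u,\nu}|\le y$, whence $\left|\int_\beta\vh{yX_u,\nu}\,\dsf s\right|\le\ell_\euc(\beta)$ for every arc $\beta$. Second, $\Div(yX_u)=0$ together with the divergence theorem shows that the total flux over the boundary of any subdomain of $\Omega_0$ vanishes. I would apply this to the exhaustion $\Omega_0^N=\{x\in\Omega_0:u(x)<N\}$, whose boundary consists of the super-level curve $\{u=N\}\cap\Omega_0$ together with the part of $A_0$ on which $u<N$. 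On $\{u=N\}$ the outer normal $\nu$ points in the direction of $\nabla u$ towards $C_0$, so $\vh{yX_u,\nu}=y\,\td{X_u}=y\sqrt{1-1/W^2}$, which saturates the bound as $N\to\infty$; thus the flux over $\{u=N\}$ tends to $+\ell_\euc(C_0)$, while the flux over $A_0\cap\{u<N\}$ stays bounded in absolute value by $\ell_\euc(A_0)$. Passing to the limit in the vanishing-flux identity gives $\ell_\euc(C_0)\le\ell_\euc(A_0)$, contradicting the strict inequality above. Hence no point of strict convexity exists and $C$ is an Euclidean geodesic, consistently with Corollary \ref{cor2.2}.

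The main obstacle will be making the saturation step rigorous: one must show that, as $N\to\infty$, the level curves $\{u=N\}\cap\Omega_0$ converge to $C_0$ with $\vh{X_u,\nu}\to1$ in an integrated sense, i.e. that $W\to\infty$ in the normal direction as one approaches the arc on which $u=+\infty$, and that the endpoints $q_1,q_2$ (where $u$ also blows up) contribute negligibly to the flux. This is precisely where Lemma \ref{lem-droite0} enters: applied on $\Omega_0$ with geodesic side $A_0$ and convex side $C_0$, it furnishes the uniform one-sided control of $u$ near $A_0$ needed to bound the flux contribution of $A_0\cap\{u<N\}$ and to guarantee that the super-level sets genuinely sweep out $\Omega_0$ and press against $C_0$, so that the two limits above are legitimate. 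The remaining ingredient, $\ell_\euc(A_0)<\ell_\euc(C_0)$ for a strictly convex chord, is elementary Euclidean geometry, and the reduction of the $-\infty$ case to the $+\infty$ case is immediate from $u\mapsto-u$.
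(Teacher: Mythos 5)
Your geometric reduction (strictly convex subarc $C_0$, chord $A_0$, lens $\Omega_0\subset\Omega$, $\ell_\euc(A_0)<\ell_\euc(C_0)$) is exactly the setup the paper uses, and the flux facts you quote (the bound $|F_u(\beta)|\le\ell_\euc(\beta)$ and the vanishing of total flux on subdomains, i.e.\ Proposition \ref{lem2.5}(i)--(ii)) are available. The genuine gap is the saturation step, and the tool you invoke to close it cannot do so. Your claim that the flux of $yX_u$ across the level curves $\{u=N\}\cap\Omega_0$ tends to $\ell_\euc(C_0)$ requires two nontrivial facts: that $W\to\infty$ (equivalently $\td{X_u}\to 1$) in an integrated sense along $\{u=N\}$ as $N\to\infty$, and that the Euclidean lengths of these level curves do not fall below $\ell_\euc(C_0)$ in the limit --- with the added complication that each level curve terminates on $A_0$ near the corners $q_1,q_2$, where $W$ stays finite, so saturation can hold at best locally uniformly in the interior of $C_0$ and the corner pieces must be shown negligible. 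None of this follows from Lemma \ref{lem-droite0}: that lemma is a purely $C^0$ comparison statement ($u\ge c$ on $C_0$ implies $u\ge c-M$ on a compact set), proved by a barrier plus the maximum principle, and it carries no information whatsoever about $\nabla u$, about $W$, or about level sets. (Likewise, the bound on the flux through $A_0\cap\{u<N\}$ needs no control of $u$ at all; it is automatic from $\td{X_u}<1$.) The only flux-saturation result in the paper, Proposition \ref{lem2.5}(iv), is stated and proved for Euclidean \emph{geodesic} arcs, so it cannot be cited for the convex arc $C_0$; to use that mechanism here you would have to re-prove it for convex arcs via the Schoen curvature-estimate argument, which is substantive work your proposal does not contain. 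As written, the central limit of your argument is unjustified.

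There is also a structural irony: the correct application of Lemma \ref{lem-droite0} to the lens $\Omega_0$ (geodesic side $A_0$, convex side $C_0$) finishes the proof immediately, and this is precisely the paper's argument. Since $u\to+\infty$ on approach to $C_0$, for every $c\in\rr$ one has $u\ge c$ on $C_0$, hence $u\ge c-M(q)$ at a fixed interior point $q\in\Omega_0$, for every $c$; letting $c\to\infty$ forces $u(q)=+\infty$, absurd. So the lemma you call in as an auxiliary device for your flux limits is in fact the entire proof, and the level-set/flux apparatus, besides containing the unproved saturation step, is unnecessary.
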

\begin{proof} Assume the contrary, that there exists a minimal solution $u$ sur $\Omega$ that takes the value $+\infty$ on $C$ where $C$ is not an Euclidean geodesic arc.

Take an strictly Euclidean mean convex subarc $C'$ of $C$. Let $\Gamma(C')$ be an Euclidean geodesic arc of $\hh^2$ joining the endpoints of $C'$. Denote by $\Omega'$ the domain delimited by $C'\cup\Gamma(C')$. We can choose $C'$ such that $\Omega'\subset\Omega$. (See Figure \ref{F6}).

\begin{figure}[!h]
\centering
\begin{tikzpicture}[line cap=round,line join=round,>=triangle 45,x=1.0cm,y=1.0cm]
\clip(-1.86,1.92) rectangle (5.2,6.36);
\draw [shift={(1.74,3.1)}] plot[domain=0.21:3.46,variable=\t]({1*2.7*cos(\t r)+0*2.7*sin(\t r)},{0*2.7*cos(\t r)+1*2.7*sin(\t r)});
\draw (-0.48,4.62)-- (2.46,5.69);
\draw (0.98,6.3) node[anchor=north west] {$C'$};
\draw (0.84,5.22) node[anchor=north west] {$\Gamma(C')$};
\draw (-1.24,5.78) node[anchor=north west] {$\Omega'$};
\draw [->] (-0.84,5.28) -- (0.74,5.24);
\begin{scriptsize}
\draw [fill=black] (1.16,5.48) circle (1.0pt);
\draw[color=black] (1.3,5.52) node {$q$};
\end{scriptsize}
\end{tikzpicture}
\caption{The domain $\Omega'$ and the arcs $C', \Gamma(C')$.}\label{F6}
\end{figure}
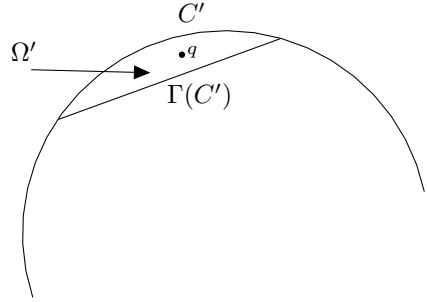
Let $q$ be a point in $\Omega'$. It follows from the lemma \ref{lem-droite0} that there exists a real number $M=M(q)$ such that $u\ge c-M$ for all real number $c$, a contradiction.
\end{proof}

\begin{dl}[Boundary values lemma \text{\cite[page 1882]{CR10}}]\label{LVB}

Let $\Omega\subset\hh^2$ be a domain and let $C$ be an Euclidean mean convex arc in $\dhr\Omega$. Suppose $\{u_n\}$ is a sequence of solutions in $\Omega$ that converges uniformly on every compact subset of $\Omega$ to a minimal solution $u$. Suppose each $u_n\in C^0(\Omega\cup C)$ and $u_n|_C$ converges uniformly on every compact subset of $C$ to a function $f$ on $C$ where $f$ is continuous or $f\equiv+\infty$ or $-\infty$. Then $u$ is continuous on $\Omega\cup C$ and $u|_C=f$.
\end{dl}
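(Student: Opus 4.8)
The plan is to reduce to two cases and then argue by purely local upper/lower barriers. Since $u$ solves \eqref{MSE} if and only if $-u$ does (indeed $\Mcal(-u)=-\Mcal u$ and $\Mcal(u+c)=\Mcal u$), replacing $(u_n,u,f)$ by $(-u_n,-u,-f)$ converts the case $f\equiv-\infty$ into $f\equiv+\infty$. So it suffices to treat $f$ continuous and $f\equiv+\infty$. Fix an interior point $\xi\in C$. Exactly as in the Straight line lemma (Corollary \ref{lem-droite}), I would take a short subarc $C'\subset C$ with $\xi$ in its interior, let $A$ be the Euclidean geodesic arc joining the endpoints of $C'$, and let $\Delta$ be the region bounded by $C'\cup A$. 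For $C'$ short enough one has $\Delta\subset\Omega$, $\Delta$ is admissible, $A$ is a geodesic arc, and $C'$ is convex towards $\Delta$; this is precisely the configuration required by Proposition \ref{barrier}.

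Suppose first $f$ is continuous. Given $\varepsilon>0$, shrink $C'$ so that $|f-f(\xi)|<\varepsilon/2$ on $\overline{C'}$. By Proposition \ref{barrier} there is a minimal solution $v$ on $\Delta$ with $v|_A=+\infty$ and $v|_{C'}\equiv 1$, continuous up to $C'$. Set $h_+:=v+(f(\xi)+\varepsilon-1)$ and $h_-:=-v+(f(\xi)-\varepsilon+1)$; these are minimal solutions on $\Delta$ with $h_\pm|_A=\pm\infty$ and $h_+|_{C'}\equiv f(\xi)+\varepsilon$, $h_-|_{C'}\equiv f(\xi)-\varepsilon$. Since $A$ is a compact arc in $\Omega$ the values $u_n|_A$ are bounded, and since $u_n|_C\to f$ uniformly on $\overline{C'}$ we get $f(\xi)-\varepsilon<u_n|_{C'}<f(\xi)+\varepsilon$ for all large $n$. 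The general maximum principle (Proposition \ref{PMG}) on $\Delta$ then applies: near $A$ the difference $u_n-h_+\to-\infty$ and $u_n-h_-\to+\infty$ automatically, near $C'$ the boundary inequalities give $u_n\le h_+$ and $u_n\ge h_-$, and the vertices are excluded. Hence $h_-\le u_n\le h_+$ on $\Delta$, and letting $n\to\infty$, $h_-\le u\le h_+$ on $\Delta$. As $h_\pm$ are continuous at $\xi$ with $h_\pm(\xi)=f(\xi)\pm\varepsilon$, this yields $\liminf_{p\to\xi}u(p)\ge f(\xi)-\varepsilon$ and $\limsup_{p\to\xi}u(p)\le f(\xi)+\varepsilon$; letting $\varepsilon\to 0$ gives $\lim_{p\to\xi,\,p\in\Omega}u(p)=f(\xi)$. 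Combined with the continuity of $f$ along $C$, this shows $u\in C^0(\Omega\cup C)$ with $u|_C=f$.

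For the case $f\equiv+\infty$, fix $M>0$ and, with $v$ as above on the same $\Delta$, set $h_-:=-v+(M+1)$, a minimal solution with $h_-|_A=-\infty$ and $h_-|_{C'}\equiv M$. Because $u_n|_C\to+\infty$ uniformly on $\overline{C'}$ we have $u_n|_{C'}\ge M$ for all large $n$, while near $A$ trivially $u_n\ge -\infty=h_-$; Proposition \ref{PMG} gives $u_n\ge h_-$ on $\Delta$, hence $u\ge h_-$. Since $h_-$ is continuous at $\xi$ with $h_-(\xi)=M$, we obtain $\liminf_{p\to\xi}u(p)\ge M$, and as $M$ is arbitrary, $u(p)\to+\infty$ as $p\to\xi$. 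Thus $u$ extends continuously, as an $\rr\cup\{+\infty\}$-valued function, to $\Omega\cup C$ with $u|_C\equiv+\infty=f$.

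The step I expect to be the main obstacle is the local geometric construction of $\Delta$: one must check that for $C'$ sufficiently short the geodesic chord $A$ lies inside $\Omega$, so that $\Delta\subset\Omega$ with $C'$ convex towards $\Delta$ and $A$ a geodesic arc bounding it. This is exactly what makes $\Delta$ an admissible domain of the shape demanded by Proposition \ref{barrier} and Proposition \ref{PMG}, and it reuses the local picture already drawn in Corollary \ref{lem-droite}. The remaining care is routine bookkeeping: passing from the $+\infty$-barrier of Proposition \ref{barrier} to finite or $-\infty$ boundary data by negation and additive constants (which preserve \eqref{MSE}), and confirming that uniform convergence of $u_n|_C$ on the compact $\overline{C'}$ produces, for all large $n$, the one-sided inequalities on $C'$ needed to invoke the maximum principle.
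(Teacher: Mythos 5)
There is a genuine gap: your construction of the local domain $\Delta$ breaks down precisely when the arc $C$ is straight (an Euclidean geodesic segment) in a neighborhood of the point $\xi$. In that case the Euclidean geodesic chord $A$ joining the endpoints of $C'$ coincides with $C'$ itself, so the "region bounded by $C'\cup A$" is empty and no barrier domain of the shape required by Proposition \ref{barrier} exists. This degenerate case cannot be dismissed: "Euclidean mean convex" includes geodesic arcs, and it is exactly this case that the lemma is invoked for in the proof of Theorem \ref{J-S type1}, where the boundary values lemma is applied to the edges $A_i$ and $B_i$ (which are Euclidean geodesic arcs carrying data $\pm\infty$). Your argument, as written, only covers the situation where $C$ is strictly convex near $\xi$ — and there it agrees, up to the cosmetic difference of using two-sided barriers instead of one-sided barriers plus symmetry, with case (i) of the paper's proof.

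The missing idea is the paper's case (ii): when $C'$ is a geodesic segment, one inscribes a geodesic quadrilateral $\Pcal\subset\Omega$ with sides $B_1,C_1,B_2,C_2$, where $C_1=C'$ and $\ell_\euc(B_1)+\ell_\euc(B_2)<\ell_\euc(C_1)+\ell_\euc(C_2)$, and uses the local Scherk surface of Proposition \ref{scherk2} (negated) as the barrier: it takes the value $M'$ on $C_1$, the value $M'':=\inf_{x\in C_2,\,n\ge 1}u_n(x)$ on $C_2$, and $-\infty$ on $B_1\cup B_2$. Note that the finiteness of $M''$ is where the locally uniform convergence of $\{u_n\}$ in the interior of $\Omega$ enters in an essential way — your proposal only uses interior boundedness near the chord $A$, which no longer exists in this case. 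Without this second construction the proof does not cover the hypotheses of the lemma, nor its actual use in the paper.
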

\begin{proof} It is sufficient to show that, for $p\in C$ and $M\in\rr$ such that $f(p)>M$, there exists a neighborhood $U$ of $p$ in $\Omega\cup C$ that satisfies $u>M$ on $U$.

Let $M'$ such that $M<M'<f(p)$. Since $f$ is continuous (or $f\equiv\infty$) and $u_n|_C$ converges uniformly on every compact subset of $C$ to $f$, there is a neighborhood $C'$ of $p$ in $C$ and a positive natural number $N_0$ such that $u_n(x)>M'$ for every $x\in C'$ and for every $n\ge N_0$. Consider two cases as follows.

(i) \textit{If $C$ is strictly Euclidean mean convex in a neighborhood of $p$ in $C$.} Without loss of generality, we suppose that $C'$ is strictly Euclidean mean convex.

Denote by $\Gamma(C')$ an open Euclidean geodesic arc of $\hh^2$ joining the end-points of $C'$. We can choose $C'$ such that $\Gamma(C')\subset\Omega$. Denote by $\Omega'$ the domain delimited by $C'\cup\Gamma(C')$.
(See Figure \ref{F7}).

\begin{figure}[!h]
\centering
\begin{tikzpicture}[line cap=round,line join=round,>=triangle 45,x=1.0cm,y=1.0cm]
\clip(-0.86,0.62) rectangle (5.54,5.14);
\draw [shift={(2.46,1.74)}] plot[domain=0.21:3.46,variable=\t]({1*2.7*cos(\t r)+0*2.7*sin(\t r)},{0*2.7*cos(\t r)+1*2.7*sin(\t r)});
\draw (0.24,3.26)-- (3.18,4.33);
\draw (1.7,5) node[anchor=north west] {$C'$};
\draw (1.56,3.86) node[anchor=north west] {$\Gamma(C')$};
\draw (-0.52,4.42) node[anchor=north west] {$\Omega'$};
\draw [->] (-0.12,3.92) -- (1.46,3.88);
\begin{scriptsize}
\draw [fill=black] (1.38,4.22) circle (1.0pt);
\draw[color=black] (1.46,4.42) node {$p$};
\end{scriptsize}
\end{tikzpicture}
\caption{The domain $\Omega'$ when $C'$ is Euclidean mean convex.}\label{F7}
\end{figure}
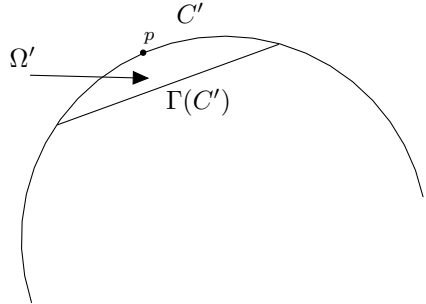


By Proposition \ref{barrier}, there exists a minimal solution $w$ on $\Omega'$ such that $w|_{C'}=M'$ and $w|_{\Gamma(C')}=-\infty$. It follows from the general maximum principle (Theorem \ref{PMG}), that $u_n\ge w$ on $\Omega'$ for every $n\ge N_0$. Hence we have $u\ge w$ sur $\Omega'$. Since $w$ is continuous, there is a neighborhood $U$ of $p$ in $\overline{\Omega'}$ such that $w>M$ on $U$. Therefore $u>M$ on $U$.

(ii) \textit{If the arc $C$ contains an Euclidean geodesic segment in a neighborhood of $p$.} Without loss of generality, we suppose that $C'$ is an Euclidean geodesic arc.

Consider a quadrilateral $\Pcal\subset\Omega$ such that $\dhr\Pcal$ is composed $4$ Euclidean geodesics $B_1,C_1,B_2,C_2$ where $C_1=C'$ and $ \ell_\euc(B_1)+\ell_\euc(B_2)<\ell_\euc(C_1)+\ell_\euc(C_2)$. (See Figure \ref{F8}).

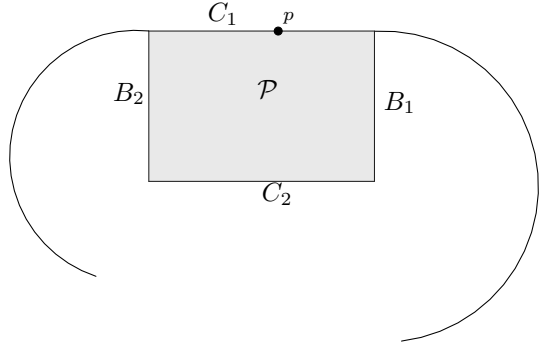
\begin{figure}[!h]
\centering
\definecolor{sqsqsq}{rgb}{0.13,0.13,0.13}
\begin{tikzpicture}[line cap=round,line join=round,>=triangle 45,x=1.0cm,y=1.0cm]
\clip(-1.1,0.56) rectangle (6.46,5.64);
\fill[color=sqsqsq,fill=sqsqsq,fill opacity=0.1] (1,5) -- (4,5) -- (4,3) -- (1,3) -- cycle;
\draw [color=sqsqsq] (1,5)-- (4,5);
\draw [color=sqsqsq] (4,5)-- (4,3);
\draw [color=sqsqsq] (4,3)-- (1,3);
\draw [color=sqsqsq] (1,3)-- (1,5);
\draw [shift={(4.11,2.93)}] plot[domain=-1.45:1.62,variable=\t]({1*2.07*cos(\t r)+0*2.07*sin(\t r)},{0*2.07*cos(\t r)+1*2.07*sin(\t r)});
\draw [shift={(0.83,3.33)}] plot[domain=1.47:4.39,variable=\t]({1*1.68*cos(\t r)+0*1.68*sin(\t r)},{0*1.68*cos(\t r)+1*1.68*sin(\t r)});
\draw (1.66,5.5) node[anchor=north west] {$C_1$};
\draw (2.38,3.1) node[anchor=north west] {$C_2$};
\draw (4,4.32) node[anchor=north west] {$B_1$};
\draw (0.4,4.44) node[anchor=north west] {$B_2$};
\draw (2.32,4.48) node[anchor=north west] {$\mathcal P$};
\begin{scriptsize}
\draw [fill=black] (2.72,5) circle (1.5pt);
\draw[color=black] (2.86,5.18) node {$p$};
\end{scriptsize}
\end{tikzpicture}
\caption {The domain $\Omega'$ when $C'$ is Euclidean geodesic.}\label{F8}
\end{figure}

Since $u_n$ converges uniformly on each compact subset of $\Omega$ to $u$, $M'':=\inf_{x\in C_2,n\ge 1}u_n(x)>-\infty$. By Proposition \ref{scherk2}, there is a minimal solution $w$ on $\Pcal$ such that $w|_{C_1}=M',w|_{C_2}=M''$ and $w=-\infty$ on $B_1\cup B_2$. It follows from the general maximum principle, Theorem \ref{PMG}, that $u_n\ge w$ on $\Omega'$ for every $n\ge N_0$. Hence we have $u\ge w$ on $\Omega'$. Since $w$ is continuous, there exists a neighborhood $U$ of $p$ in $\overline{\Omega'}$ such that $w>M$ on $U$. Then $u>M$ on $U$.

\end{proof}

\subsection{Flux formula}

Let $u$ be a minimal graph on a domain $\Omega\subset\hh^2$ . It follows from definition that $\Div(yX_u)=0,$ where $X_u=\frac{y\nabla u}{\sqrt{1+y^2\td{\nabla u}^2}}$ is a vector field on $\Omega$, $\td{X_u}< 1$.

Denote by $\gamma$ an arc in $\overline{\Omega}\cap\hh^2$ such that its Euclidean length $\ell_\euc(\gamma)$ is finite. Denote by $\nu$ a unit normal to $\gamma$ in $\hh^2$. Then, we define the flux $F_u(\gamma)$ of $u$ across $\gamma$ by
$$F_u(\gamma)=\int_{\gamma}\vh{yX_u,\nu}\,\dsf s,$$
if $\gamma\subset\Omega$, if not, we define $F_u(\gamma)=F_u(\Gamma),$ where $\Gamma$ is an arc in $\Omega$ joining the end-points of $\gamma$ such that $\ell_\euc(\Gamma)<\infty.$ Clearly, $F_u(\gamma)$ changes sign if we choose $-\nu$ in place of $\nu$. In the case $\gamma\subset\dhr\Omega$, $\nu$ will always be chosen to be the outer normal to $\dhr\Omega$.

\begin{md}\label{lem2.5}
Let $u$ be a minimal graph on a domain $\Omega\subset\hh^2$.
\begin{itemize}
\item[\rm (i)] For every curve $\gamma$ in $\overline{\Omega}$ that $\ell_\euc(\gamma)<\infty$ we have $|F_u(\gamma)|\le \ell_\euc(\gamma).$
\item[\rm (ii)] For every admissible domain $\Omega'$ of $\Omega$ such that $\ell_\euc(\dhr\Omega')<\infty$, we have $F_u(\dhr\Omega')=0.$
\item[\rm (iii)] Let $\gamma$ be a curve in $\Omega$ or an Euclidean mean convex curve in $\dhr\Omega$ on which $u$ is continuous, obtains the finite value and $\ell_\euc(\gamma)<\infty$.
Then $F_u(\gamma)<\ell_\euc(\gamma)$.
\item[\rm (iv)] Let $\gamma\subset\dhr\Omega$ be an Euclidean geodesic arc such that $u$ diverges to $+\infty$ (resp. $-\infty$) as one approaches $\gamma$ within $\Omega$, then $F_u(\gamma)=\ell_\euc(\gamma)$ (resp. $F_u(\gamma)=-\ell_\euc(\gamma)$).
\end{itemize}
\end{md}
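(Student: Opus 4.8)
The plan is to handle the four items in order, drawing as much as possible from two facts: the pointwise bound $\td{X_u}<1$ and the divergence-free identity $\Div(yX_u)=\Mcal u=0$. Items (i) and (ii) are immediate from these; the strictness in (iii) and the equality in (iv) are where the real work lies, and there I would bring in the Scherk barriers of Propositions \ref{scherk2} and \ref{barrier}.

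For (i), I would argue pointwise: since $\td{X_u}^2=y^2\td{\nabla u}^2/W^2<1$, Cauchy--Schwarz gives $|\vh{yX_u,\nu}|\le y\,\td{X_u}\,\td{\nu}\le y$ along $\gamma$. Integrating against hyperbolic arc length and using $\int_\gamma y\,\dsf s=\ell_\euc(\gamma)$ (as in Proposition \ref{pro2.3}) yields $|F_u(\gamma)|\le\ell_\euc(\gamma)$. For (ii), I would apply the divergence theorem in the hyperbolic metric: $F_u(\dhr\Omega')=\int_{\Omega'}\Div(yX_u)\,\dsf\Acal=0$. If some arcs of $\dhr\Omega'$ lie on $\dhr\Omega$, I replace each by an interior arc with the same endpoints; the resulting cycle bounds a region in $\Omega$ on which $yX_u$ is smooth and divergence-free, so the computation is unchanged. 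In particular this shows $F_u$ of a boundary arc does not depend on the interior arc $\Gamma$ used to define it.

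For (iii), write $\ell_\euc(\gamma)-F_u(\gamma)=\int_\gamma y\,(1-\vh{X_u,\nu})\,\dsf s$. When $\gamma\subset\Omega$ the integrand is strictly positive, since $1-\vh{X_u,\nu}\ge 1-\td{X_u}>0$ at every point; hence the integral is strictly positive. When $\gamma\subset\dhr\Omega$ is mean convex I would use $F_u(\gamma)=F_u(\Gamma)$ from (ii): on a strictly convex subarc the geodesic chord $\Gamma\subset\Omega$ is strictly shorter, so $F_u(\gamma)=F_u(\Gamma)\le\ell_\euc(\Gamma)<\ell_\euc(\gamma)$ by (i); along geodesic portions I would instead exploit that $u$ has finite continuous boundary values, using the barriers of Proposition \ref{barrier} to bound $\td{X_u}\le c<1$ near $\gamma$, approximate $\gamma$ by interior curves $\gamma_t\to\gamma$ with $F_u(\gamma_t)=F_u(\gamma)$ and $\ell_\euc(\gamma_t)\to\ell_\euc(\gamma)$, and pass to the limit in the positivity above.

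For (iv), I would prove the reverse of the bound in (i). By (ii), $F_u(\gamma)$ equals the flux of $yX_u$ across an interior arc $\Gamma$ near $\gamma$, oriented toward $\gamma$, up to side contributions that vanish as the enclosed region shrinks. The key is $\int_\Gamma y\,(1-\vh{X_u,\tilde\nu})\,\dsf s\to 0$, which I would obtain from the algebraic identity $1-\td{X_u}=1/\big(W^2(1+\td{X_u})\big)\le 1/W^2$ together with the geometric fact that $u\to+\infty$ forces both $\td{\nabla u}\to\infty$ (so $W\to\infty$) and $\nabla u/\td{\nabla u}\to\tilde\nu$ near $\gamma$. This gives $F_u(\gamma)\ge\ell_\euc(\gamma)$, and with (i) equality follows; the sign of $\nabla u$ produces $+\ell_\euc(\gamma)$ for $+\infty$ and $-\ell_\euc(\gamma)$ for $-\infty$. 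The main obstacle is exactly this last geometric input, and its analogue along geodesic boundary arcs in (iii): Cauchy--Schwarz alone never yields strictness or equality, and one must control $X_u$ up to $\dhr\Omega$ --- bounding it away from unit length when the data is finite, and showing it approaches $\tilde\nu$ when $u$ diverges. Both are furnished by comparison with the explicit Scherk barriers of Propositions \ref{scherk2} and \ref{barrier}, in the spirit of \cite{NR02,Pin09,CR10}.
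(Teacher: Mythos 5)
Your items (i) and (ii) follow the paper's line and are essentially fine: (i) is the pointwise bound $\td{X_u}<1$ integrated against $y\,\dsf s$ (for arcs of $\dhr\Omega$ one must also pick interior arcs whose Euclidean length approximates $\ell_\euc(\gamma)$, as the paper does), and (ii) is the divergence theorem --- though you omit the case where the admissible domain $\Omega'$ is \emph{unbounded}, which the paper handles by truncating near each ideal vertex $p$ with geodesics $H_{p,n}$ and using that the Euclidean lengths of the cut-off pieces tend to $0$; this case cannot be skipped, since admissible domains are allowed ideal vertices. Your interior case of (iii) (pointwise strict positivity of $y(1-\vh{X_u,\nu})$) and your chord argument for a non-straight convex boundary arc are correct, and in fact simpler than what the paper does there.

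The genuine gaps are in the geodesic-arc case of (iii) and in the key step of (iv). In (iii) you propose to get a uniform bound $\td{X_u}\le c<1$ near a geodesic boundary arc from the barriers of Proposition \ref{barrier}. Barrier comparison controls the \emph{values} of $u$, not its gradient: with boundary data that is merely continuous (which is all that is assumed), $\td{\nabla u}$ need not stay bounded near $\gamma$, so no such $c<1$ exists in general, and strict inequality does not survive your limit $\gamma_t\to\gamma$. The paper's argument avoids gradient control altogether: fix $p\in\gamma$, set $\Omega_\varepsilon(p)=\Omega\cap\Dbb_\varepsilon(p)$, solve the Dirichlet problem (Theorem \ref{TEG}) for $v$ with $v=u+1$ on $\gamma$ and $v=u$ on the rest of $\dhr\Omega_\varepsilon(p)$; Lemma \ref{lem cle} makes $\int_{\Omega_\varepsilon(p)}\vh{\nabla v-\nabla u,\,yX_v-yX_u}\,\dsf\Acal$ strictly positive, and the divergence theorem identifies it with $F_v(\gamma)-F_u(\gamma)$, giving $F_u(\gamma)<F_v(\gamma)\le\ell_\euc(\gamma)$. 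Similarly, in (iv) the assertions that $u\to+\infty$ forces $W\to\infty$ and $\nabla u/\td{\nabla u}\to\nu$ uniformly near $\gamma$ are precisely the crux, and they do not follow from ``comparison with Scherk barriers'': an ordering of solutions gives no control on the Gauss map. The paper proves the normal convergence (its \eqref{equ:pt}) by a different mechanism: since the graph $\Sigma$ is stable and the points $Q_n=(q_n,u(q_n))$ are intrinsically far from $\dhr\Sigma$ (because $u\to+\infty$), Schoen's curvature estimate bounds $\td{A}$ uniformly, so around each $Q_n$ the surface is a graph of uniform size over its tangent plane; if the normals did not converge to $-\nu$, these uniform pieces would project horizontally outside $\Omega$, a contradiction. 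Without this curvature-estimate input (or an equivalent substitute), your treatment of (iv) --- and of geodesic arcs in (iii) --- is an outline of the statement rather than a proof.
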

\begin{proof}
(i) - Case $\gamma\subset\Omega$. Since $\td{X_u}< 1$ we have
$$\left|F_u(\gamma)\right| \le \int_{\gamma}\left|\vh{yX_u,\nu}\right|\,\dsf s\le \int_{\gamma}y\,\dsf s=\ell_\euc(\gamma).$$
- Case $\gamma\not\subset\Omega$. For every positive real number $\varepsilon$, there is a curve $\Gamma\subset\Omega$ such that $\ell_\euc(\Gamma)\le\ell_\euc(\gamma)+\varepsilon$ and $F_u(\gamma)=F_u(\Gamma)$. Then,
$$|F_u(\gamma)|=|F_u(\Gamma)|\le\ell_\euc(\gamma)+\varepsilon.$$
This proved the result.

(ii) - Case $\Omega'$ is bounded. By divergence theorem, we have
$$F_u(\dhr\Omega')=\int_{\dhr\Omega'}\vh{yX_u,\nu}\,\dsf s=\int_{\Omega'}\Div(yX_u)\,\dsf \Acal=0.$$
- Case $\Omega'$ is unbounded. Denote by $E$ the set of ideal vertices of $\Omega'$. For each $p\in E$, we take a net of the geodesics $H_{p,n}$ that converges to $p$. (See Figure \ref{F9}). Let us denote by $\Hcal_{p,n}$ a domain of $\hh^2$ delimited by $H_{p,n}$ such that the Euclidean mean convex vector of $H_{p,n}$ pointing interior. We define $$\Omega'_n=\Omega'\setminus\bigcup_{p\in E}\overline{\Hcal}_{p,n}.$$


\begin{figure}[!h]
\centering
\definecolor{qqqqff}{rgb}{0,0,1}
\begin{tikzpicture}[line cap=round,line join=round,>=triangle 45,x=1.0cm,y=1.0cm]
\clip(-3.42,-2.16) rectangle (7.28,5.24);
\draw (1,-1)-- (-1,1);
\draw [shift={(3.66,0.73)}] plot[domain=-1.37:1.4,variable=\t]({1*1.76*cos(\t r)+0*1.76*sin(\t r)},{0*1.76*cos(\t r)+1*1.76*sin(\t r)});
\draw [shift={(2.38,2.17)}] plot[domain=0.18:2.6,variable=\t]({1*1.61*cos(\t r)+0*1.61*sin(\t r)},{0*1.61*cos(\t r)+1*1.61*sin(\t r)});
\draw [shift={(0.54,1)}] plot[domain=4.94:6.06,variable=\t]({1*2.05*cos(\t r)+0*2.05*sin(\t r)},{0*2.05*cos(\t r)+1*2.05*sin(\t r)});
\draw [shift={(-0.23,2.17)}] plot[domain=-0.35:0.59,variable=\t]({1*1.48*cos(\t r)+0*1.48*sin(\t r)},{0*1.48*cos(\t r)+1*1.48*sin(\t r)});
\draw (1.16,1.66)-- (2.54,2.52);
\draw [shift={(4.63,3.53)}] plot[domain=4.34:4.8,variable=\t]({1*2.52*cos(\t r)+0*2.52*sin(\t r)},{0*2.52*cos(\t r)+1*2.52*sin(\t r)});
\draw [shift={(5.3,0.12)}] plot[domain=2.02:3.08,variable=\t]({1*1*cos(\t r)+0*1*sin(\t r)},{0*1*cos(\t r)+1*1*sin(\t r)});
\draw (1,-1)-- (1.82,0.48);
\draw [shift={(-0.69,-0.28)}] plot[domain=-0.4:0.66,variable=\t]({1*1.83*cos(\t r)+0*1.83*sin(\t r)},{0*1.83*cos(\t r)+1*1.83*sin(\t r)});
\draw [->,dash pattern=on 4pt off 4pt] (-3,-1) -- (7,-1);
\draw (5.42,4.76) node[anchor=north west] {$\mathbb{H}^2$};
\draw (2.48,1.98) node[anchor=north west] {$\Omega'$};
\draw [dash pattern=on 2pt off 2pt] (1,-1) circle (0.76cm);
\draw [dash pattern=on 2pt off 2pt] (4,-1) circle (0.76cm);
\draw (-0.5,-0.32) node[anchor=north west] {$H_{p,n}$};
\draw [shift={(0.49,1.51)}] plot[domain=1.24:3.47,variable=\t]({1*1.58*cos(\t r)+0*1.58*sin(\t r)},{0*1.58*cos(\t r)+1*1.58*sin(\t r)});
\draw [shift={(2.14,3.95)}] plot[domain=3.83:4.99,variable=\t]({1*1.48*cos(\t r)+0*1.48*sin(\t r)},{0*1.48*cos(\t r)+1*1.48*sin(\t r)});
\draw [shift={(2.98,0.08)}] plot[domain=0.07:0.98,variable=\t]({1*1.32*cos(\t r)+0*1.32*sin(\t r)},{0*1.32*cos(\t r)+1*1.32*sin(\t r)});
\draw [shift={(4.56,0.99)}] plot[domain=3.36:4.44,variable=\t]({1*2.07*cos(\t r)+0*2.07*sin(\t r)},{0*2.07*cos(\t r)+1*2.07*sin(\t r)});
\draw [shift={(1.65,1.73)}] plot[domain=3.93:4.84,variable=\t]({1*1.27*cos(\t r)+0*1.27*sin(\t r)},{0*1.27*cos(\t r)+1*1.27*sin(\t r)});
\draw [->,dash pattern=on 4pt off 4pt] (-2,-2) -- (-2,5);
\draw (6.66,-0.36) node[anchor=north west] {$x$};
\draw (-1.76,5.16) node[anchor=north west] {$y$};
\draw (1,-1) node[anchor=north west] {$p$};
\begin{scriptsize}
\draw [fill=qqqqff] (1,-1) circle (1.5pt);
\draw [fill=qqqqff] (-1,1) circle (1.5pt);
\draw [fill=qqqqff] (1,3) circle (1.5pt);
\draw [fill=qqqqff] (4,-1) circle (1.5pt);
\draw [fill=qqqqff] (3.96,2.46) circle (1.5pt);
\draw [fill=qqqqff] (2.54,0.54) circle (1.5pt);
\draw [fill=qqqqff] (1.16,1.66) circle (1.5pt);
\draw [fill=qqqqff] (2.54,2.52) circle (1.5pt);
\draw [fill=qqqqff] (3.72,1.18) circle (1.5pt);
\draw [fill=qqqqff] (4.86,1.02) circle (1.5pt);
\draw [fill=qqqqff] (4.3,0.18) circle (1.5pt);
\draw [fill=qqqqff] (1.82,0.48) circle (1.5pt);
\draw [fill=qqqqff] (0.76,0.84) circle (1.5pt);
\draw [fill=black] (-2,-1) circle (1.0pt);
\draw[color=black] (-1.86,-1.2) node {$O$};
\end{scriptsize}
\end{tikzpicture}
\caption{The domain $\Omega'$ and $H_{p,n}$.}\label{F9}
\end{figure}
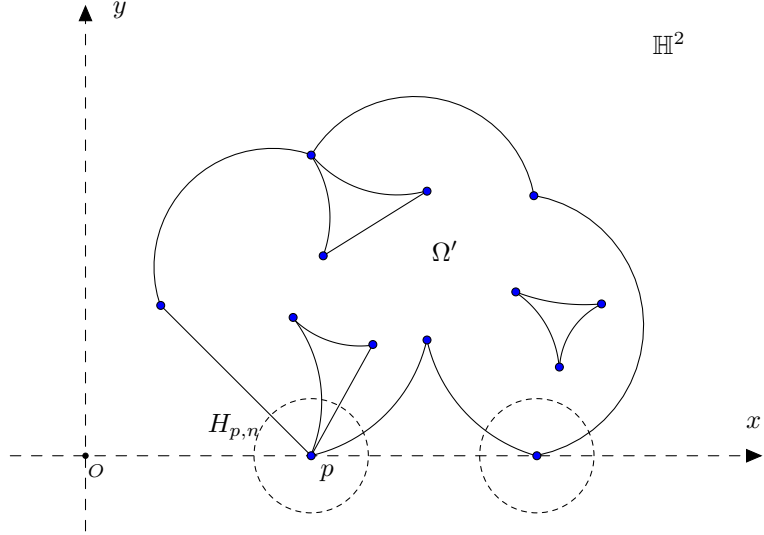

These subdomains of $\Omega'$ are bounded. It follows from the previous case that $F_u(\dhr\Omega'_n)=0$. Thus we have
$$F_u(\dhr\Omega')=F_u(\dhr\Omega')-F_u(\dhr\Omega'_n)=\sum_{p\in E}F_u(\dhr\Omega'\cap\Hcal_{p,n})-F_u(\dhr\Omega'_n\setminus\dhr\Omega').$$
Since $\ell_\euc(\dhr\Omega')<\infty$, we have
$$\sum_{p\in E}|F_u(\dhr\Omega'\cap\Hcal_{p,n})|\le \sum_{p\in E}\ell_\euc(\dhr\Omega'\cap\Hcal_{p,n})\to 0\quad\text{as }n\to\infty.$$
Moreover
$$|F_u(\dhr\Omega'_n\setminus\dhr\Omega')|\le \ell_\euc(\dhr\Omega'_n\setminus\dhr\Omega')\le \sum_{p\in E}\ell_\euc(H_{p,n})\to 0\quad\text{as }n\to\infty.$$
This completes the proof.

(iii) It is sufficient to show that $F_u(\gamma)< \ell_\euc(\gamma)$ for a small arc $\gamma$ .
Let $p\in\gamma$, there exists a positive $\varepsilon$ such that $\overline{\Dbb}_\varepsilon(p)\cap(\dhr\Omega\setminus\gamma)=\emptyset$. Let
$\Omega_\varepsilon(p):=\Omega\cap \Dbb_\varepsilon(p).$(See Figure \ref{F10}).
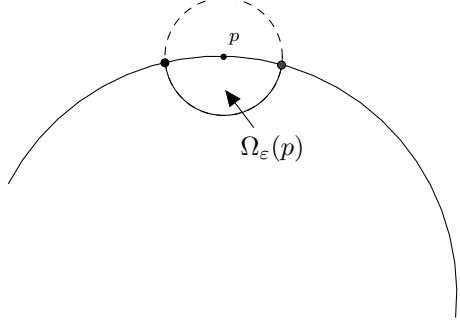
\begin{figure}[!h]
\centering
\definecolor{uuuuuu}{rgb}{0.27,0.27,0.27}
\begin{tikzpicture}[line cap=round,line join=round,>=triangle 45,x=1.0cm,y=1.0cm]
\clip(-1.64,1.02) rectangle (5.38,6.02);
\draw [shift={(1.72,1.91)}] plot[domain=-0.1:2.66,variable=\t]({1*3.16*cos(\t r)+0*3.16*sin(\t r)},{0*3.16*cos(\t r)+1*3.16*sin(\t r)});
\draw [dash pattern=on 3pt off 3pt] (1.78,5.06) circle (0.78cm);
\draw [shift={(1.78,5.06)}] plot[domain=3.25:6.14,variable=\t]({1*0.78*cos(\t r)+0*0.78*sin(\t r)},{0*0.78*cos(\t r)+1*0.78*sin(\t r)});
\draw (1.88,4.14) node[anchor=north west] {$\Omega_\varepsilon(p)$};
\draw [->] (2.18,4.12) -- (1.8,4.62);
\begin{scriptsize}
\draw [fill=black] (1.78,5.06) circle (1.0pt);
\draw[color=black] (1.92,5.3) node {$p$};
\draw [fill=black] (1,4.98) circle (1.5pt);
\draw [fill=uuuuuu] (2.55,4.95) circle (1.5pt);
\end{scriptsize}
\end{tikzpicture}
\caption{The domain $\Omega_\varepsilon(p)$.}\label{F10}
\end{figure}


By the general existence theorem, there is a minimal solution $v$ on $\Omega_\varepsilon(p)$ with $v=u+1$ on $\gamma$ and $v= u$ on $\dhr\Omega_\varepsilon(p)\setminus\gamma$.

It follows from the lemma \ref{lem cle}, that $$\int_{\Omega_\varepsilon(p)}\vh{\nabla v-\nabla u,yX_v-yX_u}\,\dsf \Acal>0.$$
Since $u,v$ are the minimal solutions
$$\vh{\nabla v-\nabla u,yX_v-yX_u}=\Div\left((v-w)(yX_v-yX_u)\right).$$
By the divergence theorem, we have
$$0<\int_{\dhr\Omega_\varepsilon(p)}\vh{(v-u)(yX_v-yX_u),\nu}\,\dsf s=F_v(\gamma)-F_u(\gamma). $$
Therefore
$$F_u(\gamma)< F_v(\gamma)\le\ell_\euc(\gamma),$$
which completes the proof.

(iv) We show for the case $u$ diverges to $+\infty$ as one approaches $\gamma$ within $\Omega$.
Without loss of generality, we assume that $\gamma$ is compact. We first prove that
\begin{equation}\label{equ:pt}
\lim_{q\to p}N_{u-u(q)}(q)\to -\nu(p),\qquad \forall\,p\in\gamma.
\end{equation}
Assume the contrary that there exists a sequence $q_n\in\Omega, q_n\to p$ such that $\lim_{n\to\infty}N_{u-u(q_n)}(q_n)=v\ne -\nu (p)$.
 Since $u_{|\gamma}=+\infty$,  there exists $R>0$ satisfies the distance $d_\Sigma(Q_n, \dhr \Sigma)>R$, $\forall n$ where $\Sigma=\Gr(u)$ and $ Q_n=(q_n,u(q_n) )$. Since $\Sigma$ is stable, we deduce from Schoen's curvature estimate \cite{Sch83} or \cite[Theorem 2.10]{CM11} that $$\td{A(q)}\le\kappa\qquad \forall\, q\in \Dbb^\Sigma_{R/2}\Ngoac{Q_n}$$ where $A$ is the second fundamental form of $\Sigma$ and $\kappa$ is an absolute constant.

Hence, by \cite[Lemma 2.4]{CM11},  around each $Q_n$ the surface $\Sigma$ is a graph over a disk $\Dbb_r(Q_n)$ of the tangent plane at $Q_n$ of $\Sigma$ and the graph has bounded distance from the disk $\Dbb_r(Q_n)$. The radius of the disk depends only on $R$, hence it is independent of $n$. So, if $q_n$ is close enough to $\gamma$, then the horizontal projection of $\Dbb_r(Q_n)$ and thus of the surface $\Sigma$ is not contained in $\Omega$, contradiction.

For $\delta>0$ sufficiently small, we take  $\Omega_\delta\subset\{q\in\Omega:d_\Omega(q,\gamma)<\delta \}$ such that for each point $q\in\Omega_\delta$, there will exist a unique point $p=p(q)\in\gamma$ such that $d_\Omega(q,p)=d_\Omega(q,\gamma)$. By using the diffeomorphism $\Omega_\delta\to \gamma\times[0,\delta),\quad q\mapsto (p(q),d(q,\gamma))$, we can extend $\nu$ on $\Omega_\delta$. By (\ref{equ:pt}), 
\begin{equation}\label{equ:pt1}
\lim_{q\in\Omega,q\to\gamma}\vh{X_u(q),\nu(q)}=1.
\end{equation}
Denote by $p_1,p_2$ two end-points of $\gamma$. Define $\gamma_\varepsilon=\{q\in\Omega_\delta:d(q,\gamma)=\varepsilon \}$ for every $0<\varepsilon<\delta$. Denote by $q_1,q_2$ two end-points of $\gamma_\varepsilon$ such that $d(p_i,q_i)=\varepsilon,i=1,2$. We have
$$F_u(\gamma)=F_u\Ngoac{\overline{p_1q_1}}+F_u(\gamma_\varepsilon)+F_u\Ngoac{\overline{q_2p_2}}.$$
Since $$F_u\Ngoac{\overline{p_iq_i}}\ge -\ell_\euc\Ngoac{\overline{p_iq_i}}\to 0,\qquad \text{as }\varepsilon\to 0, i=1,2,$$
and by (\ref{equ:pt1}) $$F_u(\gamma_\varepsilon)=\int_{\gamma_\varepsilon}\vh{X_u(q),\nu(q)}\,\dsf s_{\euc}(q)\to \ell_{\euc}(\gamma)\qquad\text{as }\varepsilon\to 0,$$
then $F_u(\gamma)\ge \ell_{\euc}(\gamma)$. Therefore $F_u(\gamma)= \ell_{\euc}(\gamma).$
 This completes the proof.
\end{proof}

\begin{md}\label{lem2.7} Let $\{u_n\}$ be a sequence of minimal graphs on a fixed domain $\Omega\subset\hh$ which extends continuously to $\dhr\Omega$ and let $A$ be an Euclidean geodesic arc in $\dhr\Omega$ such that $\ell_\euc(A)<\infty$. Then
\begin{itemize}
\item[\rm (i)] If $\{u_n\}$ diverges uniformly to $+\infty$ on compact sets of $A$ and while remaining uniformly bounded on compact sets of $\Omega$, then
$$\lim_{n\to\infty}F_{u_n}(A)=\ell_\euc(A).$$
\item[\rm (ii)] If $\{u_n\}$ diverges uniformly to $+\infty$ on compact sets of $\Omega$ while remaining uniformly bounded on compact sets of and $A$ , then
$$\lim_{n\to\infty}F_{u_n}(A)=-\ell_\euc(A).$$
\end{itemize}
\end{md}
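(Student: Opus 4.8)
The plan is to deduce both parts from the single–solution flux identity of Proposition~\ref{lem2.5}(iv), using the a~priori bound $|F_{u_n}(A)|\le\ell_\euc(A)$ of Proposition~\ref{lem2.5}(i) for the easy direction. In each case it then suffices to prove the matching one–sided bound, and one may split $A$ into a compact central subarc together with two short end pieces near the endpoints of $A$: by additivity of the flux and Proposition~\ref{lem2.5}(i), the end pieces contribute at most their Euclidean lengths, which can be made as small as we like, so the whole question reduces to compact subarcs bounded away from the endpoints of $A$.

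For part (i), we already have $\limsup_n F_{u_n}(A)\le\ell_\euc(A)$, and I would get the reverse inequality by a subsequence argument. Given any subsequence, the uniform bound on compact subsets of $\Omega$ together with the compactness Theorem~\ref{TC} (and the interior gradient estimate Theorem~\ref{EGI}, which upgrades the convergence to $C^1$ on compact sets) yields a further subsequence $u_{n_k}$ converging on compact subsets of $\Omega$ to a minimal solution $u$. The key step is to show that $u$ itself diverges to $+\infty$ as one approaches $A$: fixing an interior point $p$ of $A$, I would place on a small region $\Omega'$ adjacent to $A$ a Scherk barrier furnished by Proposition~\ref{barrier} that equals $+\infty$ on the geodesic subarc through $p$ and a finite constant on the complementary convex arc; since $u_{n_k}\to+\infty$ on compact subsets of that subarc while staying bounded below on the interior arc, the General Maximum Principle (Theorem~\ref{PMG}) gives $u_{n_k}\ge$ (barrier) on $\Omega'$ for $k$ large, hence $u\ge$ (barrier) and $u\to+\infty$ on $A$. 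Proposition~\ref{lem2.5}(iv) then yields $F_u(A)=\ell_\euc(A)$. Finally, since $F_{u_n}(A)$ equals the flux across a fixed interior curve joining the endpoints of $A$ (Proposition~\ref{lem2.5}(ii)) and the $C^1$ convergence on the compact central part of that curve passes to the limit, $F_{u_{n_k}}(A)\to\ell_\euc(A)$; as every subsequence has a further subsequence with this limit, the full sequence converges and (i) follows.

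For part (ii) I expect the main obstacle. Here $u_n\to+\infty$ throughout the interior while staying bounded on $A$, so there is no finite interior limit to exploit, and the monotone comparison inequalities only reproduce the trivial bound $F_{u_n}(A)\ge-\ell_\euc(A)$. The real content is a boundary gradient blow–up: one must show $\vh{X_{u_n},\nu}\to-1$ along $A$, equivalently that $W_{u_n}\to\infty$ on $A$ while the tangential component $\vh{X_{u_n},\tau}=y\,\partial_\tau u_n/W_{u_n}\to0$ (the derivative of $u_n$ along the fixed geodesic $A$ staying controlled), which geometrically says that the graphs $\Gr(u_n)$ become vertical over $A$ and that their bounded–height parts converge to the minimal cylinder $A\times\rr$, minimal precisely because $A$ is a Euclidean geodesic (Corollary~\ref{cor2.2}). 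I would attempt to force this by a sandwich: on a thin collar along $A$, build via Proposition~\ref{scherk2} and Proposition~\ref{barrier} minimal barriers that diverge to $\pm\infty$ on the appropriate geodesic sides and whose own fluxes across $A$ are driven to $-\ell_\euc(A)$ through Proposition~\ref{lem2.5}(ii),(iv); the symmetry $u\mapsto-u$, which preserves the divergence form of $\Mcal$ and sends $F_u$ to $-F_u$, is convenient here for fixing signs. The genuinely hard point---and the reason (ii) is harder than (i)---is that the decisive points lie on $\partial\Gr(u_n)$ over $A$, where $u_n$ is only bounded: there the stability/curvature estimate used in Proposition~\ref{lem2.5}(iv) (which requires points at large intrinsic distance from $\partial\Gr(u_n)$) does not apply, whereas on interior curves parallel to $A$ the normal need not be vertical. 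Establishing the boundary blow–up of $\td{\nabla u_n}$ with controlled tangential derivative along $A$ is thus the crux of the argument.
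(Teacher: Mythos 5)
The paper itself states Proposition \ref{lem2.7} without any proof (Section \ref{sec4} ends with the bare statement), so your attempt has to be judged on its own terms; unfortunately it has a genuine gap in each part. In part (i) the outline is right (extract a subsequence converging to a minimal solution $u$, show $u\to+\infty$ at $A$, apply Proposition \ref{lem2.5}(iv), then pass the flux to the limit across a fixed interior curve split into a compact central piece plus short ends), but your key step fails as written: Proposition \ref{barrier} produces a solution $w$ equal to $+\infty$ on the geodesic subarc, and no finite-valued $u_{n_k}$ can satisfy $u_{n_k}\ge w$; Theorem \ref{PMG} cannot yield this either, since its hypothesis $\limsup(w-u_{n_k})\le 0$ fails exactly on that subarc. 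The comparison must go the other way: for each finite $M$, take the Scherk solution of Proposition \ref{scherk2} on a thin quadrilateral $\Pcal\subset\Omega$ with one side $C_1$ a compact subarc of $A$, boundary values $M$ on $C_1$, a lower bound for the $u_n$ on the opposite side $C_2$, and $-\infty$ on the two lateral geodesics; then Theorem \ref{PMG} applies and gives $u_{n_k}\ge w_M$ for $k$ large, hence $u\ge M$ near interior points of $C_1$. This is precisely the paper's Theorem \ref{LVB} (case (ii) of its proof), which you could simply have cited; with that repair, the remainder of your part (i) (including the central-piece/ends decomposition and the $C^1$ convergence from elliptic estimates) is correct.

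In part (ii) you explicitly leave the crux unproven, so the proposal is incomplete there, and your diagnosis --- that the curvature estimate used in the proof of Proposition \ref{lem2.5}(iv) is unusable because the decisive points lie on $\dhr\Gr(u_n)$ --- is too pessimistic. The standard resolution is to apply it at points of $\Gr(u_n)$ at a large \emph{fixed} height $h$ over points near a compact subarc $A''\subset A$: since the $u_n$ are uniformly bounded on $A''$ (say by $C$) and diverge uniformly on compact subsets of $\Omega$, such points exist for $n$ large, lie over a region collapsing onto $A''$, and are at intrinsic distance at least comparable to $h-C$ from the part of $\dhr\Gr(u_n)$ over $A$ and at a definite distance from the rest of $\dhr\Gr(u_n)$. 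Schoen's estimate therefore applies at these points; the pieces of $\Gr(u_n)$ at bounded heights near $A''$ subconverge to a vertical minimal surface, which must be the cylinder $A''\times\rr$ (minimal because $A$ is a Euclidean geodesic, Corollary \ref{cor2.2}). This forces $\vh{X_{u_n},\nu}\to-1$ along curves parallel to $A''$, and the conclusion $F_{u_n}(A)\to-\ell_\euc(A)$ follows by the same flux decomposition as in part (i). In short, part (ii) is proved by rerunning the argument of Proposition \ref{lem2.5}(iv) at high fixed heights rather than at boundary points --- an idea your proposal gestures toward (convergence to the cylinder) but does not carry out.
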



\section{Monotone convergence theorem and Divergence set theorem}\label{sec5}

\subsection{Monotone convergence theorem}

\begin{dl}[Local Harnack inequality]\label{IHL}
Let $u$ be a nonnegative minimal solution on $\Omega=\Dbb_R(P)\subset\hh^2$ and let $Q$ be a point of $\Omega$. There is a function $\Phi(t,r)$ ( that does not depend on the function $u$) such that
$$|u(Q)|\le \Phi\left(|u(P)|,d(P,Q)\right).$$
For each fixed $t$, $\Phi(t,-)$ is a continuous strictly increasing function defined on a interval $[0,r(t))$ with
$$\Phi(t,0)=t,\qquad \lim_{r\to r(t)^-}\Phi(t,r)=\infty$$
where $t\mapsto r(t)$ is a continuous strictly decresing function tending to zero as $t$ tends to infinity.
\end{dl}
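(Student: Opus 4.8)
The plan is to integrate the interior gradient estimate (Theorem~\ref{EGI}) along the minimizing geodesic joining $P$ to $Q$, thereby converting the pointwise gradient bound into a scalar differential inequality whose associated ordinary differential equation \emph{defines} the universal comparison function $\Phi$.

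First I would parametrize by arc length the minimizing geodesic $\gamma:[0,L]\to\Omega$ (for the hyperbolic distance $d$) with $\gamma(0)=P$, $\gamma(L)=Q$ and $L=d(P,Q)<R$. For each $s$ the ball $\Dbb_{R-s}(\gamma(s))$ is contained in $\Dbb_R(P)=\Omega$ by the triangle inequality, since $d(\gamma(s),P)=s$; hence $u$ is a nonnegative minimal solution there. Setting $g(s):=u(\gamma(s))\ge 0$, the Cauchy--Schwarz inequality gives $|g'(s)|\le\td{\nabla u(\gamma(s))}$, and Theorem~\ref{EGI} applied on $\Dbb_{R-s}(\gamma(s))$ yields
$$g'(s)\le \td{\nabla u(\gamma(s))}\le \exp\!\Big(\tilde C(R-s)\big(1+(g(s)/(R-s))^2\big)\Big)=:F(s,g(s)),$$
where $\tilde C(\rho)$ is the gradient-estimate constant associated to a ball of radius $\rho$ centered in $\overline\Omega$. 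The essential point is that $F$ depends only on $\Omega$ and not on $u$.

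Next I would define $\Phi(t,\cdot)$ to be the maximal solution $\phi$ of the scalar problem $\phi'=F(\cdot,\phi)$ with $\phi(0)=t$. Because $F$ is smooth, hence locally Lipschitz in its second variable, and strictly increasing in $g$ on $\{g\ge 0\}$, the ODE comparison principle gives $g(s)\le\phi(s)$ on the common existence interval, so that $u(Q)=g(L)\le\phi(L)=\Phi\big(u(P),d(P,Q)\big)$ whenever $d(P,Q)$ lies in the existence interval $[0,r(t))$. The stated properties of $\Phi$ then follow from standard ODE theory: $\Phi(t,0)=t$ by the initial condition; $\Phi(t,\cdot)$ is strictly increasing since $F>0$; and since $F(s,g)\to\infty$ both as $g\to\infty$ and as $s\to R^{-}$ (the factors $\tilde C(R-s)$ and $(R-s)^{-2}$ both blow up), the maximal solution escapes to $+\infty$ at a finite value $r(t)<R$, giving $\lim_{r\to r(t)^-}\Phi(t,r)=\infty$. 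For the blow-up time $r$, comparison again applies: if $t_1<t_2$ the solution issuing from $t_2$ dominates the one from $t_1$ and so blows up no later, whence $r$ is decreasing; continuous dependence on initial data yields its continuity and strict monotonicity, while the explosive growth of $F$ for large $g$ forces $r(t)\to 0$ as $t\to\infty$.

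The main obstacle I anticipate is the bookkeeping of the gradient-estimate constant $\tilde C(\rho)$ as the inscribed radius $\rho=R-s$ shrinks toward $0$. One must verify that it can be taken uniform over all centers ranging in the compact set $\overline\Omega$, where $y$ is pinched between positive constants so that the Ricci bound $\Ric_{\sol}\ge -2$ and the computations underlying Theorem~\ref{EGI} produce a constant depending only on $\overline\Omega$ and on $\rho$; and one must confirm that the resulting $F(s,g)$ is genuinely independent of $u$, which is what makes $\Phi$ universal. The remaining delicate point is translating the qualitative ODE facts --- finite-time blow-up, together with the continuous, strictly decreasing blow-up time $r(t)$ satisfying $r(t)\to 0$ --- into the precise regularity asserted for $\Phi$ and $r$; these verifications are routine but must be carried out carefully, since the entire value of the theorem lies in the $u$-independent nature of the comparison function.
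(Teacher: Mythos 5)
Your proposal is correct and follows essentially the same route as the paper: restrict $u$ to the unit-speed geodesic through $P$ and $Q$, bound its derivative via the interior gradient estimate (Theorem \ref{EGI}) applied on the shrinking balls $\Dbb_{R-s}(\gamma(s))\subset\Omega$, and define $\Phi(t,\cdot)$ as the solution of the resulting scalar ODE with $\Phi(t,0)=t$, concluding by ODE comparison. If anything, you are more careful than the paper, which silently treats the constant of Theorem \ref{EGI} as uniform along the geodesic and omits the ODE-theoretic verification of the stated properties of $\Phi$ and of the blow-up time $r(t)$.
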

\begin{proof} Let $\gamma:[0,R)\hookrightarrow \Omega$ be an geodesic arc that satisfies
$$\gamma(0)=P,\qquad \td{\gamma'}=1,\qquad Q\in\gamma([0,R)) .$$
Define $\hat{u}: [0,R)\to\rr$ by condition
$$\hat{u}(r)=u(\gamma(r)).$$
By Theorem \ref{EGI},
\begin{equation}
\hat{u}'(r)\le f\left(\dfrac{\hat{u}(r)}{R-r}\right).
\end{equation}
For each $t>0,$ we define a function $r\mapsto\Phi(t,r)$ by the conditions
\begin{equation}
\dfrac{d\Phi}{d r}(t,r)=f\left(\dfrac{\Phi(t,r)}{R-r}\right),\qquad \Phi(t,0)=t.
\end{equation}
Then $\hat{u}(r)\le \Phi(\hat{u}(0),r)$ whenever $\Phi$ is well defined.
\end{proof}

\begin{dl}[Dini's monotone convergence theorem]
If $X$ is a compact topological space, and $\{f_n\}$ is a monotonically increasing sequence (meaning $f_n(x)\le f_{n+1}(x)$ for all $n$ and $x$) of continuous real-valued functions on $X$ which converges pointwise to a continuous function $f$, then the convergence is uniform. The same conclusion holds if $\{f_n\}$ is monotonically decreasing instead of increasing.
\end{dl}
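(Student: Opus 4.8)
The plan is to reduce the statement to a standard $\varepsilon$-net argument by passing to the nonnegative decreasing sequence $g_n := f - f_n$. First I would record the elementary properties of $g_n$: each $g_n$ is continuous as a difference of continuous functions; the monotonicity hypothesis $f_n \le f_{n+1} \le f$ gives $g_n \ge 0$ and $g_{n+1} \le g_n$; and $f_n \to f$ pointwise is exactly $g_n \to 0$ pointwise on $X$. Since uniform convergence of $\{f_n\}$ to $f$ is equivalent to $\sup_X g_n \to 0$, it suffices to establish the latter.

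Next, fix $\varepsilon > 0$. For each point $x \in X$, the convergence $g_n(x) \to 0$ furnishes an integer $N_x$ with $g_{N_x}(x) < \varepsilon$, and continuity of $g_{N_x}$ then furnishes an open neighborhood $U_x$ of $x$ on which $g_{N_x} < \varepsilon$ throughout. The family $\{U_x\}_{x \in X}$ is an open cover of $X$, so compactness yields a finite subcover $U_{x_1}, \dots, U_{x_k}$.

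I would then set $N := \max\{N_{x_1}, \dots, N_{x_k}\}$ and conclude as follows: given $n \ge N$ and any $y \in X$, we have $y \in U_{x_i}$ for some $i$, whence $g_n(y) \le g_{N_{x_i}}(y) < \varepsilon$, where the first inequality uses the monotone decrease of $\{g_n\}$ together with $n \ge N \ge N_{x_i}$. Hence $\sup_X g_n \le \varepsilon$ for all $n \ge N$, which is precisely uniform convergence. The decreasing case is obtained by applying the result just proved to the increasing sequence $\{-f_n\}$.

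The only delicate point, and the reason both hypotheses are indispensable, is that the index $N_x$ securing $g_{N_x}(x) < \varepsilon$ a priori depends on $x$: the argument succeeds because monotonicity lets a single neighborhood control all later indices simultaneously, while compactness reduces the potentially infinite collection of local indices to a finite set whose maximum then works uniformly over $X$. There is no genuine analytic obstacle here; the main task is simply to interlock these two ingredients in the correct order.
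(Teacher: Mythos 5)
Your proof is correct: it is the classical compactness argument for Dini's theorem, and every step (the reduction to $g_n = f - f_n \ge 0$ decreasing to $0$, the pointwise choice of $N_x$, the neighborhoods $U_x$, the finite subcover, and the use of monotonicity to pass from index $N_{x_i}$ to all $n \ge N$) is carried out properly, including the reduction of the decreasing case to the increasing one via $\{-f_n\}$. Note that the paper itself states this theorem without any proof, citing it as a standard fact to be used in proving its Monotone convergence theorem (Theorem \ref{TCM}), so your argument supplies exactly the canonical missing proof and there is nothing in the paper to compare it against.
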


\begin{dl}[Monotone convergence theorem]\label{TCM}
Let $\{u_n\}$ be a monotone increasing  sequence of minimal graphs on a domain $\Omega\subset\hh^2$. There exists an open set $\Ucal\subset\Omega$ (called the convergence set) such that $\{u_n\}$ converges uniformly on compact subset of $\Ucal$ and diverges uniformly to $+\infty$ on compact subsets of $\Vcal:=\Omega\setminus \Ucal$ (divergence set). Moreover, if $u_n$ is bounded at a point $p\in\Omega$, then the convergence set $\Ucal$ is non-empty (it contains a neighborhood of $p$).
\end{dl}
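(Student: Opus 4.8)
The plan is to define the convergence set intrinsically as
$$\Ucal=\{p\in\Omega:\sup_n u_n(p)<\infty\},$$
and to put $\Vcal=\Omega\setminus\Ucal$; since $\{u_n\}$ is monotone increasing, the limit $\lim_n u_n(p)\in(-\infty,+\infty]$ exists at every $p$, so $\Vcal=\{p:\lim_n u_n(p)=+\infty\}$. The whole argument rests on the Local Harnack inequality (Theorem \ref{IHL}), together with the elementary remark that adding a constant to a minimal solution again yields a minimal solution (the operator $\Mcal$ depends only on $\nabla u$ and the second derivatives of $u$; equivalently the translations $T_h$ of \eqref{equ2.5} are isometries). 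This remark allows normalization: on any disk $\overline{\Dbb_R(q)}\subset\subset\Omega$ the solution $u_1$ is continuous, hence bounded below, so with $C=-\min_{\overline{\Dbb_R(q)}}u_1$ all the shifted functions $v_n:=u_n+C$ are \emph{nonnegative} minimal solutions on $\Dbb_R(q)$, to which Theorem \ref{IHL} applies.

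First I would prove that $\Ucal$ is open and that $\{u_n\}$ is uniformly bounded near each of its points. If $p\in\Ucal$, choose $\Dbb_R(p)\subset\subset\Omega$ and normalize to $v_n\ge 0$ as above; then $v_n(p)\le M'$ for some $M'<\infty$. As $\Phi(t,\cdot)$ is increasing and $t\mapsto r(t)$ is decreasing, Theorem \ref{IHL} centered at $p$ gives, for $d(p,Q)\le\tfrac12 r(M')$, the uniform bound $v_n(Q)\le\Phi\bigl(v_n(p),d(p,Q)\bigr)\le\Phi\bigl(M',\tfrac12 r(M')\bigr)<\infty$. Hence $\{u_n\}$ is uniformly bounded on $\Dbb_{r(M')/2}(p)$, so $\Dbb_{r(M')/2}(p)\subset\Ucal$ and $\Ucal$ is open; covering a compact $K\subset\Ucal$ by finitely many such disks gives a uniform bound on $K$. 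The Compactness Theorem \ref{TC} then produces a subsequential limit that is a minimal solution, necessarily equal to $u:=\lim_n u_n$ by monotone pointwise convergence; in particular $u$ is continuous on $\Ucal$, and Dini's theorem upgrades the monotone convergence $u_n\uparrow u$ to uniform convergence on compact subsets of $\Ucal$. The ``moreover'' follows at once: if $u_n$ is bounded at $p$ then $p\in\Ucal$, which contains the neighborhood $\Dbb_{r(M')/2}(p)$.

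It remains to handle $\Vcal$. The contrapositive of the previous step shows that $p\in\Vcal$ forces $u_n(p)\to+\infty$, so divergence holds pointwise on $\Vcal$; the real task is to make it uniform on a compact $K\subset\Vcal$. I would argue by contradiction: if it fails, there are $L<\infty$, a subsequence $n_k\to\infty$, and points $Q_k\in K$ with $u_{n_k}(Q_k)\le L$; passing to a further subsequence, $Q_k\to Q_*\in K\subset\Vcal$. Now fix the disk $\Dbb_R(Q_*)\subset\subset\Omega$ and its constant $C$, so that $v_n=u_n+C\ge 0$ on $\Dbb_R(Q_*)$ and $v_{n_k}(Q_k)\le L+C=:M''$. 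Centering Theorem \ref{IHL} at $Q_k$ on $\Dbb_{R/2}(Q_k)\subset\Dbb_R(Q_*)$, and using $d(Q_k,Q_*)\to 0$ together with $r\bigl(v_{n_k}(Q_k)\bigr)\ge r(M'')>0$, for $k$ large we obtain $v_{n_k}(Q_*)\le\Phi\bigl(v_{n_k}(Q_k),d(Q_k,Q_*)\bigr)\le\Phi\bigl(M'',\tfrac12 r(M'')\bigr)<\infty$, so $\{u_{n_k}(Q_*)\}$ is bounded. This contradicts $u_n(Q_*)\to+\infty$ and establishes uniform divergence on $K$.

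The main obstacle is precisely this last step: Theorem \ref{IHL} provides only an \emph{upper} bound, with a Harnack radius $r(t)$ that collapses to $0$ as $t\to\infty$, so it cannot be used directly to propagate ``largeness'' from $p\in\Vcal$ to nearby points. The device that resolves this is to center the estimate not at $p$ but at the hypothetical points $Q_k$ where boundedness is assumed, exploiting that the normalizing constant $C$ can be fixed once and for all (because $Q_k\to Q_*$), which keeps the admissible radius $r$ bounded away from $0$ along the subsequence and turns the upper Harnack bound into the desired contradiction.
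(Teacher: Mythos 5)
Your treatment of $\Ucal$ is exactly the paper's proof: the paper defines $\Ucal=\left\{x\in\Omega:\sup_{n}|u_n(x)|<\infty\right\}$, normalizes on a disk $\Dbb_R(P)$ by the constant $C=\inf_{\Dbb_R(P)}u_1$ so that $-C+u_n\ge 0$, and applies Theorem \ref{IHL} centered at $P$ to get the uniform bound $-C+u_n(Q)\le\Phi\left(m,\tfrac{r(m)}{2}\right)$ on $\Dbb_\varepsilon(P)$ with $\varepsilon=\min\left\{\tfrac{r(m)}{2},R\right\}$; this is your first step almost verbatim. In fact, that is \emph{all} the paper writes: its proof stops after showing $\Ucal$ is open, leaving the uniform convergence on compacts of $\Ucal$ (clearly intended to follow from Theorem \ref{TC} and Dini's theorem, which is stated just beforehand but never invoked) and the uniform divergence on compacts of $\Vcal$ unproved. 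Your proposal supplies both missing pieces, so it is more complete than the paper's own text, and your convergence argument (covering, Theorem \ref{TC}, uniqueness of the monotone pointwise limit, Dini) is the standard completion the paper presumably had in mind.

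One caveat concerns your $\Vcal$ argument. The Harnack data $\Phi$ and $r(\cdot)$ of Theorem \ref{IHL} are not universal: they are built from the gradient estimate of Theorem \ref{EGI}, whose constant depends on the center and the radius of the disk. When you center the inequality at the moving points $Q_k$, you tacitly use a single pair $(\Phi,r)$ for all $k$; this requires the (true, but unstated) remark that the constants of Theorem \ref{EGI} can be chosen uniformly for centers ranging in a compact subset of $\Omega$ and a fixed radius, since they only involve bounds on the coordinate $y$ there. Your closing discussion addresses the fixed normalizing constant $C$ but not this dependence of $\Phi,r$ on the center, so as written there is a small gap. It is worth noting that this step needs no Harnack inequality at all: since $\{u_n\}$ is increasing, for any fixed $n$ and all $k$ with $n_k\ge n$ one has $u_n(Q_k)\le u_{n_k}(Q_k)\le L$, and letting $k\to\infty$ the continuity of $u_n$ gives $u_n(Q_*)\le L$ for every $n$, i.e.\ $Q_*\in\Ucal$, contradicting $Q_*\in K\subset\Vcal$ directly. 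This monotonicity-plus-continuity argument is shorter and avoids the uniformity issue altogether.
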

\begin{proof}
Let $\{u_n\}$ be an increasing sequence. 
Denote by $P$ a point in $\Ucal: =\left\{x\in\Omega:\sup_{n\ge 0}|u_n(x)|<\infty\right\}$. There is a positive number $R$ such that
$$\Dbb_R(P)\subset\Omega,\qquad C:=\inf_{x\in \Dbb_R(P)}u_1(x)>-\infty.$$
Let $m:=-C+\sup_{n\ge 0}u_n(P)$. The function $\Phi$ is well defined on the interval $[0,r(m))$. Define $\varepsilon:=\min\left\{\dfrac{r(m)}{2},R\right\}$. For each $Q\in \Dbb_\varepsilon(P)$, by using the local Harnach inequality, we have
$$-C+u_n(Q)\le \Phi(-C+u_n(P),d(P,Q))\le \Phi\left(m,\frac{r(m)}{2}\right).$$
By definition, $\Dbb_\varepsilon(P)\subset U$. Then $\Ucal$ is open.
\end{proof}

\subsection{Divergence set theorem}
\begin{dl}[Divergence set theorem $\Vcal$]\label{SD}
Let $\Omega\subset\hh^2$ be a admissible domaine whose boundary is composed with finitely Euclidean mean convex arcs $C_i$. Let $\{u_n\}$ be an increasing or decreasing sequence of minimal graphs on $\Omega$, respectively. Then, for each open arc $C_i$, we assume that, for every $n$, $u_n$ extends continuously on $C_i$ and either $\{u_n|_{C_i}\}$ converges to a continuous function or $\infty$ or $-\infty$, respectively. Let $\Vcal=\Vcal(\{u_n\})$ be the divergence set associated to $\{u_n\}$.
\begin{itemize}
\item[\rm (i)] The boundary of $\Vcal$ consists of the union of a set of non-intersecting interior Euclidean geodesic chords in $\Omega$ joining two points of $\dhr\Omega$, together with arcs in $\dhr\Omega$. Moreover, a component of $\Vcal$ cannot be an isolated point.
\item[\rm (ii)] A component of $\Vcal$ cannot be an interior chord.
\item[\rm (iii)]No two interior chords in $\dhr\Vcal$ can have a common endpoint at a convex corner of $\Vcal$.
\item[\rm (iv)] The endpoints of interior Euclidean geodesic chords are among the vertices of $\dhr\Omega$. So the boundary of $\Vcal$ has a finite set of interior Euclidean geodesic chords in $\Omega$ joining two vertices of $\dhr\Omega$.
\end{itemize}
\end{dl}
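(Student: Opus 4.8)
The plan is to study $\Vcal$ entirely through the limit $u=\lim_n u_n$ on the convergence set $\Ucal$ furnished by Theorem \ref{TCM}, together with the flux formulas. First I would reduce the decreasing case to the increasing one: since $\Mcal(-v)=-\Mcal v$, the family $\{-u_n\}$ is an increasing sequence of minimal graphs with the same divergence set, so I may assume $\{u_n\}$ increasing. The single fact driving the whole argument is that $u$ blows up at the interface: if some sequence $q_k\to p\in\dhr\Vcal\cap\Omega$ had $q_k\in\Ucal$ with $u(q_k)$ bounded, then the local Harnack inequality (Theorem \ref{IHL}) would bound all $u_n$ uniformly on a fixed disk about $p$, placing a neighborhood of $p$ in $\Ucal$ and contradicting $p\in\dhr\Vcal$. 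Hence $u\to+\infty$ as one approaches $\dhr\Vcal$ from within $\Ucal$.

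For part (i), fix an interior arc $T$ of $\dhr\Vcal$ and let $W$ be the component of $\Ucal$ abutting $T$. Since $u$ is a minimal solution on $W$ diverging to $+\infty$ along $T$, any subarc of $T$ that were strictly convex towards $W$ would be forced to be geodesic by the Straight Line Lemma (Corollary \ref{lem-droite}), which is absurd; so $T$ has no subarc strictly convex towards $W$. To exclude subarcs $\tau$ strictly convex towards $\Vcal$, I would cut off the lens $R\subset\Vcal$ bounded by $\tau$ and the geodesic chord $\sigma$ joining its endpoints, and apply the flux balance $F_{u_n}(\dhr R)=0$ of Proposition \ref{lem2.5}(ii): the verticality of the blown-up graph along $\tau$ (established as in the proof of Proposition \ref{lem2.5}(iv)) gives $F_{u_n}(\tau)\to-\ell_\euc(\tau)$ for the outward normal of $R$, while $|F_{u_n}(\sigma)|\le\ell_\euc(\sigma)$; passing to the limit yields $\ell_\euc(\sigma)\ge\ell_\euc(\tau)$, impossible since a strictly convex arc is strictly longer than its chord. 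Thus $T$ is a Euclidean geodesic arc. That a component of $\Vcal$ cannot be an isolated point $p$ follows by solving the Dirichlet problem on a small disk $\Dbb$ about $p$ with the finite boundary values $u|_{\dhr\Dbb}$ (Theorem \ref{TEP}) and comparing with $u$ on $\Dbb\setminus\{p\}$ by the general maximum principle (Theorem \ref{PMG}) with $p$ as the single exceptional point: this bounds $u$ near $p$, contradicting $u(p)=+\infty$.

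Parts (ii) and (iii) are pure flux arguments. For (ii), if a whole component of $\Vcal$ were a single interior chord $\gamma$, then both of its sides are convergence regions along which $u_n\to+\infty$ as one approaches $\gamma$; applying Proposition \ref{lem2.7}(i) to a thin collar on each side gives $F_{u_n}(\gamma)\to+\ell_\euc(\gamma)$ for each of the two opposite outward normals, so the same oriented flux tends simultaneously to $+\ell_\euc(\gamma)$ and $-\ell_\euc(\gamma)$, which is absurd. For (iii), suppose two chords $\gamma_1,\gamma_2$, geodesic by (i), meet at a convex corner $v$ of $\Vcal$; pick $a\in\gamma_1$ and $b\in\gamma_2$ near $v$, so that convexity places the geodesic triangle $\Delta=vab\subset\Vcal$, with $\sigma=ab$. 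Proposition \ref{lem2.7}(i), applied on the convergence regions adjacent to $\gamma_1$ and $\gamma_2$, gives $F_{u_n}(va)\to-\ell_\euc(va)$ and $F_{u_n}(vb)\to-\ell_\euc(vb)$ for the outward normal of $\Delta$, while $|F_{u_n}(\sigma)|\le\ell_\euc(\sigma)$. The balance $F_{u_n}(\dhr\Delta)=0$ then forces $\ell_\euc(\sigma)\ge\ell_\euc(va)+\ell_\euc(vb)$, contradicting the strict Euclidean triangle inequality.

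For (iv), each interior chord is by (i) a geodesic joining two points of $\dhr\Omega$, and I would check its endpoints are vertices. If a chord ended at an interior point $\xi$ of a smooth arc $C_i$ on which the data converge to a continuous function, the Boundary Values Lemma (Theorem \ref{LVB}) would make $u$ finite and continuous near $\xi$, so a neighborhood of $\xi$ lies in $\Ucal$ and no interface can terminate there; if instead the data tend to $\pm\infty$ on $C_i$, then $C_i$ is itself geodesic (Corollary \ref{lem-droite}) and the chord meets $\dhr\Omega$ at an endpoint of $C_i$. Either way the endpoints are vertices of $\dhr\Omega$, and since $\dhr\Omega$ has finitely many vertices and the chords are pairwise disjoint, they are finite in number. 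The step I expect to be hardest is the convexity dichotomy in (i): the Straight Line Lemma only applies on the side carrying a finite minimal solution, so excluding arcs convex towards $\Vcal$ rests on the delicate lens flux estimate above, and it is precisely this recurring interplay between the bound $|F_u|\le\ell_\euc$, the verticality of the graph at the interface, and the strict length inequalities (for a convex arc against its chord, and for a triangle) that must be orchestrated carefully throughout.
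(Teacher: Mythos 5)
Your treatment of parts (i)--(iii) follows essentially the paper's own route: the Straight Line Lemma (Corollary \ref{lem-droite}, resting on Lemma \ref{lem-droite0}) eliminates boundary arcs of $\Vcal$ curving towards the convergence side, and flux balance (Proposition \ref{lem2.5}(ii) together with Proposition \ref{lem2.7}) combined with strict Euclidean length inequalities does the rest. Your triangle argument for (iii) is the paper's argument, except that the paper additionally performs a reduction (replacing $\triangle$ by a component of $\Ucal\cap\triangle$ bounded by two chords through the corner) to cover the case where the triangle is not contained in $\Vcal$, which you dispose of only by taking $a,b$ close to the corner. Your lens argument for arcs convex towards $\Vcal$ in (i) is a legitimate completion of a step the paper leaves implicit, but note that Propositions \ref{lem2.5}(iv) and \ref{lem2.7} are stated only for Euclidean \emph{geodesic} arcs, so you are tacitly extending them to convex arcs; the verticality proof does extend, but this needs to be said, since the flux identity on $\tau$ is what the whole case rests on.

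The genuine gap is in (iv). First, your appeal to the Boundary Values Lemma (Theorem \ref{LVB}) is circular: that lemma presupposes that $\{u_n\}$ converges uniformly on compact subsets of the domain under consideration, and near $\xi$ this is precisely what is in question (one side of the putative chord lies in $\Vcal$). The correct tool is Lemma \ref{lem-droite0}: for large $n$ one has $u_n\le\sup_{C'}f+1$ on a compact subarc $C'\ni\xi$ of $C_i$, and since $C'$ is mean convex towards $\Omega$ the lemma yields a bound $u_n\le\sup_{C'}f+1+M(K)$ on compact subsets $K$ of the lens between $C'$ and its geodesic chord; points of the chord of $\dhr\Vcal$ near $\xi$ lie in such a $K$, contradicting uniform divergence on compacts of $\Vcal$. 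Second, in the sub-case $u_n|_{C_i}\to+\infty$, your claim that ``$C_i$ is itself geodesic by Corollary \ref{lem-droite}'' is false: that corollary concerns a \emph{single} minimal solution diverging at $C_i$, not a sequence with diverging boundary data (take $\Omega$ a disk, $C_i$ its strictly convex boundary circle, and $u_n\equiv n$; then $\Vcal=\Omega$ and $C_i$ is not geodesic), and the conclusion you draw from it does not follow. The correct argument is again Lemma \ref{lem-droite0}, now used as a divergence-propagation statement: $u_n\ge\min_{C'}u_n-M(K)\to+\infty$ on compacts of the lens, so a full one-sided neighborhood of the interior of $C_i$ lies in $\Vcal$ and therefore meets no point of $\dhr\Vcal$; hence no chord can terminate at an interior point of $C_i$. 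With these repairs your (iv) becomes exactly the ``analogous argument using Lemma \ref{lem-droite0}'' that the paper alludes to.
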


\begin{proof}
Without loss of generality, assume that the sequence $\{u_n\}$ is increasing and the divergence set is not empty.

(i) It is clear by Lemma \ref{lem-droite0} and Corollary \ref{lem-droite} that each arc of $\dhr\Vcal$ must be Euclidean geodesic and that no vertex of $\dhr\Vcal$ lies in $\Omega$, then (i) follows.


(iii) Assume the contrary that (iii) does not hold. Let $\gamma_1,\gamma_2$ be two arcs of $\dhr\Vcal$ having a common endpoint $p\in\dhr\Vcal$ at a convex corner. Choose two points $q_i\in\gamma_i,i=1,2$ such that the triangle $\triangle$ with vertices $p,q_1,q_2$ lies in $\Omega$. We can always assume that the triangle $\triangle$ is either in $\Ucal$ or in $\Vcal$. Indeed, if $\triangle\not\subset\Vcal$, we take a component $\triangle'$ of $\Ucal\cap\triangle$. Let $\gamma'_1, \gamma'_2$ be two Euclidean geodesic chords in $\Omega$ having a common endpoint $p$ such that the domain delimited by them is the smallest domain containing $\triangle'$. Then $\gamma'_1,\gamma'_2\subset\dhr\Vcal$ and $\triangle' $ is  the triangle delimited by $\gamma'_1,\gamma'_2$ and $\overline{q_1q_2}$ and $\triangle'\subset\Ucal$. We can choose $\gamma'_1,\gamma'_2$ in place of $\gamma_1,\gamma_2$.
By the lemma  \ref{lem2.7}, 
$$0=F_{u_n}(\dhr\triangle)=F_{u_n}(\overline{pq_1})+F_{u_n}(\overline{pq_2})+F_{u_n}(\overline{q_1q_2}),$$
$$\lim_{n\to\infty}F_{u_n}(\overline{pq_i})=\begin{cases}
\ell_\euc(\overline{pq_i})&\text{ if }\triangle\subset\Ucal\\
-\ell_\euc(\overline{pq_i})&\text{ if }\triangle\subset\Vcal
\end{cases}\qquad i=1,2.$$
On the other hand $ \lim_{n\to\infty}\left|F_{u_n}(\overline{q_1q_2})\right|\le \ell_\euc(\overline{q_1q_2}).$ Hence
$$\ell_\euc(\overline{q_1q_2})\ge \ell_\euc(\overline{pq_1})+\ell_\euc(\overline{pq_2}),$$
a contradiction.

(ii) and (iv) are proved with analogous arguments, using lemma \ref{lem-droite0} and corollary \ref{lem-droite}. The details are left to the reader.
\end{proof}


\section{Jenkins-Serrin type theorem}\label{sec6}

Let $\Omega\subset\hh^2$ be a domain whose boundary $\dhr_\infty\Omega$ consists of a finite number of Euclidean geodesic arcs $A_i,B_i$, a finite number of Euclidean mean convex arcs $C_i$ (towards $\Omega$) together with their endpoints, which are called the vertices of $\Omega$. We mark the
$A_i$ edges by $+\infty$ and the $B_i$ edges by $-\infty$, and assign arbitrary continuous data $f_i$ on the arcs $C_i$, respectively. 
Assume that no two $A_i$ edges and no two $B_i$ edges meet at a convex corner. 
We call such a domain $\Omega$   \textit{Scherk domain}. (See Figure \ref{F11}.)
Assume in addition that, the vertices at infinity of Scherk domain are the removable points at infinity.


\begin{figure}[!h]
\centering
\definecolor{qqqqff}{rgb}{0,0,1}
\begin{tikzpicture}[line cap=round,line join=round,>=triangle 45,x=1.0cm,y=1.0cm]
\clip(-2,-1.4) rectangle (8.2,5.64);
\draw (2.06,-0.56)-- (0.06,1.44);
\draw [shift={(1.86,1.64)}] plot[domain=1.46:3.25,variable=\t]({1*1.81*cos(\t r)+0*1.81*sin(\t r)},{0*1.81*cos(\t r)+1*1.81*sin(\t r)});
\draw [shift={(4.14,1.89)}] plot[domain=-1.21:1.38,variable=\t]({1*2.62*cos(\t r)+0*2.62*sin(\t r)},{0*2.62*cos(\t r)+1*2.62*sin(\t r)});
\draw [shift={(1.6,1.44)}] plot[domain=4.94:6.06,variable=\t]({1*2.05*cos(\t r)+0*2.05*sin(\t r)},{0*2.05*cos(\t r)+1*2.05*sin(\t r)});
\draw [shift={(0.97,2.63)}] plot[domain=-0.4:0.64,variable=\t]({1*1.35*cos(\t r)+0*1.35*sin(\t r)},{0*1.35*cos(\t r)+1*1.35*sin(\t r)});
\draw [shift={(3.57,5.48)}] plot[domain=4.07:4.74,variable=\t]({1*2.54*cos(\t r)+0*2.54*sin(\t r)},{0*2.54*cos(\t r)+1*2.54*sin(\t r)});
\draw (2.22,2.1)-- (3.64,2.94);
\draw [shift={(6.36,0.56)}] plot[domain=2.02:3.08,variable=\t]({1*1*cos(\t r)+0*1*sin(\t r)},{0*1*cos(\t r)+1*1*sin(\t r)});
\draw (2.06,-0.56)-- (2.88,0.92);
\draw [shift={(0.37,0.16)}] plot[domain=-0.4:0.66,variable=\t]({1*1.83*cos(\t r)+0*1.83*sin(\t r)},{0*1.83*cos(\t r)+1*1.83*sin(\t r)});
\draw [shift={(2.79,2.4)}] plot[domain=4:4.77,variable=\t]({1*1.49*cos(\t r)+0*1.49*sin(\t r)},{0*1.49*cos(\t r)+1*1.49*sin(\t r)});
\draw [->,dash pattern=on 4pt off 4pt] (-1.94,-0.56) -- (8.06,-0.56);
\draw (6.48,5.2) node[anchor=north west] {$\mathbb{H}^2$};
\draw (1.1,2.3) node[anchor=north west] {$\Omega$};
\draw (0.42,0.74) node[anchor=north west] {$A_1$};
\draw (0.12,3.62) node[anchor=north west] {$C_1$};
\draw (1.8,3.28) node[anchor=north west] {$C_2$};
\draw (2.68,3.64) node[anchor=north west] {$C_3$};
\draw (6.58,1.5) node[anchor=north west] {$C_4$};
\draw (2.06,3.44)-- (4.64,4.46);
\draw (3.04,4.58) node[anchor=north west] {$B_1$};
\draw (2.94,2.72) node[anchor=north west] {$A_2$};
\draw (3.6,0.98)-- (5.06,-0.56);
\draw (4.78,1.62)-- (5.36,0.62);
\draw (4.78,1.62)-- (5.92,1.46);
\draw (5.06,2.14) node[anchor=north west] {$A_3$};
\draw (4.48,1.5) node[anchor=north west] {$A_4$};
\draw (5.46,1.28) node[anchor=north west] {$C_5$};
\draw (4.36,0.66) node[anchor=north west] {$B_2$};
\draw (3.32,0.48) node[anchor=north west] {$C_6$};
\draw (1.54,0.86) node[anchor=north west] {$C_7$};
\draw (2.3,1.52) node[anchor=north west] {$C_8$};
\draw (2.5,0.74) node[anchor=north west] {$B_3$};
\draw (4.08,2.58) node[anchor=north west] {$\Omega$};
\draw [->,dash pattern=on 4pt off 4pt] (-0.92,-1.28) -- (-0.94,5.44);
\draw (7.84,0.04) node[anchor=north west] {$x$};
\draw (-0.88,5.62) node[anchor=north west] {$y$};
\begin{scriptsize}
\draw [fill=qqqqff] (2.06,-0.56) circle (1.5pt);
\draw [fill=qqqqff] (0.06,1.44) circle (1.5pt);
\draw [fill=qqqqff] (2.06,3.44) circle (1.5pt);
\draw [fill=qqqqff] (5.06,-0.56) circle (1.5pt);
\draw [fill=qqqqff] (4.64,4.46) circle (1.5pt);
\draw [fill=qqqqff] (3.6,0.98) circle (1.5pt);
\draw [fill=qqqqff] (2.22,2.1) circle (1.5pt);
\draw [fill=qqqqff] (3.64,2.94) circle (1.5pt);
\draw [fill=qqqqff] (4.78,1.62) circle (1.5pt);
\draw [fill=qqqqff] (5.92,1.46) circle (1.5pt);
\draw [fill=qqqqff] (5.36,0.62) circle (1.5pt);
\draw [fill=qqqqff] (2.88,0.92) circle (1.5pt);
\draw [fill=qqqqff] (1.82,1.28) circle (1.5pt);
\draw [fill=black] (-0.94,-0.56) circle (1.5pt);
\draw[color=black] (-0.8,-0.28) node {$O$};
\end{scriptsize}
\end{tikzpicture}
\caption{An example of Scherk domain} \label{F11}
\end{figure}
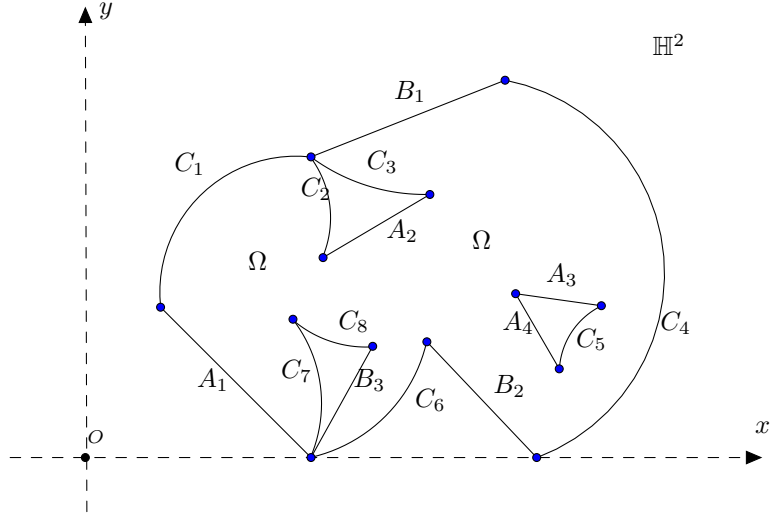


An \textit{Euclidean polygonal domain} $\Pcal$ in $\hh^2$ is a domain whose boundary $\dhr_\infty\Pcal$ is composed of finitely many Euclidean geodesic arcs in $\hh^2$ together with their endpoints, which are called the vertices of $\Pcal$.

An Euclidean polygonal domain $\Pcal$ is said to be inscribed in a Scherk domain $\Omega$ if $\Pcal\subset\Omega$ and its vertices
are among the vertices of $\Omega$. We notice that a vertex may be in $\dhr_\infty\Omega$
and an edge may be one of the $A_i$ or $B_i$. (See Figure \ref{F12}).

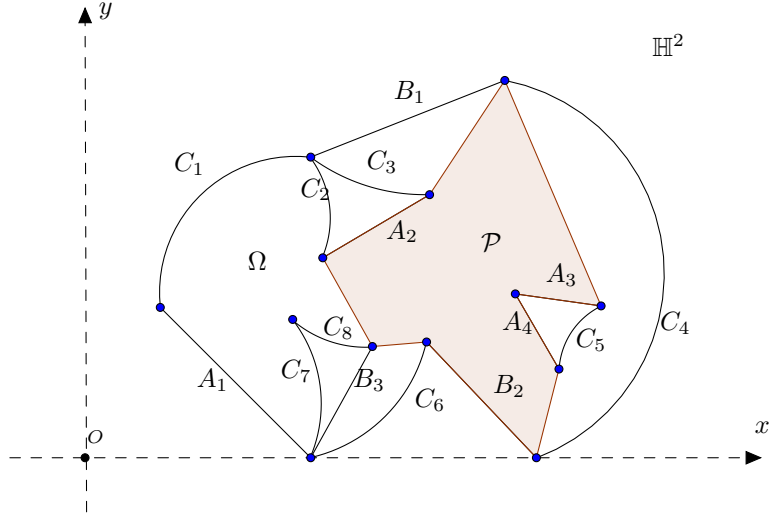
\begin{figure}[!h]

\centering

\definecolor{zzttqq}{rgb}{0.6,0.2,0}
\definecolor{qqqqff}{rgb}{0,0,1}
\begin{tikzpicture}[line cap=round,line join=round,>=triangle 45,x=1.0cm,y=1.0cm]
\clip(-2,-1.82) rectangle (8.14,5.24);
\fill[color=zzttqq,fill=zzttqq,fill opacity=0.1] (3.54,0.54) -- (5,-1) -- (5.3,0.18) -- (4.72,1.18) -- (5.86,1.02) -- (4.58,4.02) -- (3.58,2.5) -- (2.16,1.66) -- (2.82,0.48) -- cycle;
\draw (2,-1)-- (0,1);
\draw [shift={(1.8,1.2)}] plot[domain=1.46:3.25,variable=\t]({1*1.81*cos(\t r)+0*1.81*sin(\t r)},{0*1.81*cos(\t r)+1*1.81*sin(\t r)});
\draw [shift={(4.08,1.45)}] plot[domain=-1.21:1.38,variable=\t]({1*2.62*cos(\t r)+0*2.62*sin(\t r)},{0*2.62*cos(\t r)+1*2.62*sin(\t r)});
\draw [shift={(1.54,1)}] plot[domain=4.94:6.06,variable=\t]({1*2.05*cos(\t r)+0*2.05*sin(\t r)},{0*2.05*cos(\t r)+1*2.05*sin(\t r)});
\draw [shift={(0.91,2.19)}] plot[domain=-0.4:0.64,variable=\t]({1*1.35*cos(\t r)+0*1.35*sin(\t r)},{0*1.35*cos(\t r)+1*1.35*sin(\t r)});
\draw [shift={(3.51,5.04)}] plot[domain=4.07:4.74,variable=\t]({1*2.54*cos(\t r)+0*2.54*sin(\t r)},{0*2.54*cos(\t r)+1*2.54*sin(\t r)});
\draw (2.16,1.66)-- (3.58,2.5);
\draw [shift={(6.3,0.12)}] plot[domain=2.02:3.08,variable=\t]({1*1*cos(\t r)+0*1*sin(\t r)},{0*1*cos(\t r)+1*1*sin(\t r)});
\draw (2,-1)-- (2.82,0.48);
\draw [shift={(0.31,-0.28)}] plot[domain=-0.4:0.66,variable=\t]({1*1.83*cos(\t r)+0*1.83*sin(\t r)},{0*1.83*cos(\t r)+1*1.83*sin(\t r)});
\draw [shift={(2.73,1.96)}] plot[domain=4:4.77,variable=\t]({1*1.49*cos(\t r)+0*1.49*sin(\t r)},{0*1.49*cos(\t r)+1*1.49*sin(\t r)});
\draw [->,dash pattern=on 4pt off 4pt] (-2,-1) -- (8,-1);
\draw (6.42,4.76) node[anchor=north west] {$\mathbb{H}^2$};
\draw (1.04,1.86) node[anchor=north west] {$\Omega$};
\draw (0.36,0.3) node[anchor=north west] {$A_1$};
\draw (0.06,3.18) node[anchor=north west] {$C_1$};
\draw (1.74,2.84) node[anchor=north west] {$C_2$};
\draw (2.62,3.2) node[anchor=north west] {$C_3$};
\draw (6.52,1.06) node[anchor=north west] {$C_4$};
\draw (2,3)-- (4.58,4.02);
\draw (2.98,4.14) node[anchor=north west] {$B_1$};
\draw (2.88,2.28) node[anchor=north west] {$A_2$};
\draw (3.54,0.54)-- (5,-1);
\draw (4.72,1.18)-- (5.3,0.18);
\draw (4.72,1.18)-- (5.86,1.02);
\draw (5,1.7) node[anchor=north west] {$A_3$};
\draw (4.42,1.06) node[anchor=north west] {$A_4$};
\draw (5.4,0.84) node[anchor=north west] {$C_5$};
\draw (4.3,0.22) node[anchor=north west] {$B_2$};
\draw (3.26,0.04) node[anchor=north west] {$C_6$};
\draw (1.48,0.42) node[anchor=north west] {$C_7$};
\draw (2.04,1) node[anchor=north west] {$C_8$};
\draw (2.44,0.3) node[anchor=north west] {$B_3$};
\draw [color=zzttqq] (3.54,0.54)-- (5,-1);
\draw [color=zzttqq] (5,-1)-- (5.3,0.18);
\draw [color=zzttqq] (5.3,0.18)-- (4.72,1.18);
\draw [color=zzttqq] (4.72,1.18)-- (5.86,1.02);
\draw [color=zzttqq] (5.86,1.02)-- (4.58,4.02);
\draw [color=zzttqq] (4.58,4.02)-- (3.58,2.5);
\draw [color=zzttqq] (3.58,2.5)-- (2.16,1.66);
\draw [color=zzttqq] (2.16,1.66)-- (2.82,0.48);
\draw [color=zzttqq] (2.82,0.48)-- (3.54,0.54);
\draw (4.14,2.12) node[anchor=north west] {$\mathcal P$};
\draw [->,dash pattern=on 4pt off 4pt] (-0.98,-1.72) -- (-1,5);
\draw (7.78,-0.4) node[anchor=north west] {$x$};
\draw (-0.94,5.18) node[anchor=north west] {$y$};
\begin{scriptsize}
\draw [fill=qqqqff] (2,-1) circle (1.5pt);
\draw [fill=qqqqff] (0,1) circle (1.5pt);
\draw [fill=qqqqff] (2,3) circle (1.5pt);
\draw [fill=qqqqff] (5,-1) circle (1.5pt);
\draw [fill=qqqqff] (4.58,4.02) circle (1.5pt);
\draw [fill=qqqqff] (3.54,0.54) circle (1.5pt);
\draw [fill=qqqqff] (2.16,1.66) circle (1.5pt);
\draw [fill=qqqqff] (3.58,2.5) circle (1.5pt);
\draw [fill=qqqqff] (4.72,1.18) circle (1.5pt);
\draw [fill=qqqqff] (5.86,1.02) circle (1.5pt);
\draw [fill=qqqqff] (5.3,0.18) circle (1.5pt);
\draw [fill=qqqqff] (2.82,0.48) circle (1.5pt);
\draw [fill=qqqqff] (1.76,0.84) circle (1.5pt);
\draw [fill=black] (-1,-1) circle (1.5pt);
\draw[color=black] (-0.86,-0.72) node {$O$};
\end{scriptsize}
\end{tikzpicture}
\caption{ A polygonal domain $\Pcal$ inscribed in $\Omega$.}\label{F12}
\end{figure}

Given a polygonal domain $\Pcal$ inscribed in $\Omega$, we denote by $\ell_\euc(\Pcal)$ the Euclidean perimeter of $\dhr\Pcal$, and by $a_\euc(\Pcal)$ and $b_\euc(\Pcal)$ the total Euclidean lengths of the edges $A_i$ and $B_i$ lying in $\dhr\Pcal$, respectively.

Now is a good time to state and to prove the main theorem of this paper.
\begin{dl}\label{J-S type1}
Let $\Omega$ be a Scherk domain in $\hh^2$ with the families $\{A_i \},\{B_i \},\{C_i \}$.
\begin{itemize}
\item[\rm (i)] If the family $\{C_i\}$ is non-empty, there exists a solution to the Dirichlet problem on $\Omega$ if and only if
\begin{equation}\label{cond1}
2a_\euc(\Pcal)<\ell_\euc(\Pcal),\qquad 2b_\euc(\Pcal)<\ell_\euc(\Pcal)
\end{equation}
for every Euclidean polygonal domain inscribed in $\Omega$. Moreover, such a solution is unique if it exists.
\item[\rm (ii)] If the family $\{ C_i\}$ is empty, there exists a solution to the Dirichlet problem on $\Omega$ if and only if
\begin{equation}\label{cond2}
a_\euc(\Pcal)=b_\euc(\Pcal)
\end{equation}
when $\Pcal=\Omega$ and the inequalities in (\ref{cond1}) hold for all other Euclidean polygonal domains inscribed in $\Omega$. Such a solution is unique up to an additive constant, if it exists.\end{itemize}
\end{dl}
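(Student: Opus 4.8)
\emph{Necessity and uniqueness.} The plan is to read off both the necessary conditions and uniqueness from the flux. Given a solution $u$ and an inscribed Euclidean polygonal domain $\Pcal$, Proposition \ref{lem2.5}(ii) gives $F_u(\dhr\Pcal)=0$. Splitting $\dhr\Pcal$ into edges and using Proposition \ref{lem2.5}(iv) on the $A_i$ (flux $=+\ell_\euc$) and $B_i$ (flux $=-\ell_\euc$), together with Proposition \ref{lem2.5}(iii) on every interior chord and every $C_i$ edge (flux strictly smaller in absolute value than the length), I would obtain
\[
0=a_\euc(\Pcal)-b_\euc(\Pcal)+\sum_{\gamma}F_u(\gamma),\qquad \Big|\sum_\gamma F_u(\gamma)\Big|<\ell_\euc(\Pcal)-a_\euc(\Pcal)-b_\euc(\Pcal),
\]
the strictness holding as soon as $\dhr\Pcal$ carries at least one such finite edge. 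Then $|a_\euc-b_\euc|<\ell_\euc-a_\euc-b_\euc$ yields both inequalities in \eqref{cond1}; when $\dhr\Pcal$ has no finite edge the identity collapses to $a_\euc=b_\euc$, which for a proper subpolygon would contradict the strict inequality (so it cannot occur when a solution exists) and for $\Pcal=\Omega$ in case (ii) is exactly \eqref{cond2}. For uniqueness in (i) I would invoke the General Maximum Principle (Theorem \ref{PMG}): two solutions share the data $f_i$ on each $C_i$ and both equal $\pm\infty$ on the $A_i,B_i$, so after checking that their difference is bounded near the infinite edges (using the common local Scherk barriers) one has $\limsup(u_1-u_2)\le0$ off the vertices, hence $u_1\le u_2$ and, symmetrically, equality. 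In case (ii) the same principle forces $u_1-u_2$ to be constant on $\Omega$, giving uniqueness up to an additive constant.

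\emph{Existence by a double monotone limit.} For sufficiency I would first fix bounded continuous data $f_i$ on the $C_i$ and a finite continuous function $g$ on the $B_i$, and let $u_n$ be the solution from Theorem \ref{TEG} with values $n$ on $A_i$, $g$ on $B_i$, and $f_i$ on $C_i$. Raising $n$ raises only the data on the $A_i$, so the Maximum Principle makes $\{u_n\}$ increasing and the Monotone Convergence Theorem (Theorem \ref{TCM}) produces a convergence set $\Ucal$ and divergence set $\Vcal$. After showing $\Vcal=\emptyset$ (below), the limit is a solution with $+\infty$ on the $A_i$ and finite data $g,f_i$ elsewhere, the traces being identified by the Boundary Values Lemma (Theorem \ref{LVB}) and controlled by the barriers of Propositions \ref{scherk2} and \ref{barrier}. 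Sending $g\equiv-m\to-\infty$ then gives a decreasing family whose (again convergent) limit is the desired solution taking $+\infty$ on $A_i$ and $-\infty$ on $B_i$; unbounded data on the $C_i$ is reached by the further monotone approximation $\min\{n,f_i\}$.

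\emph{The crux: emptiness of the divergence set.} The heart of the proof is to rule out $\Vcal\neq\emptyset$. Suppose a component $\Vcal_0$ exists; by the Divergence Set Theorem (Theorem \ref{SD}) it is a nondegenerate inscribed polygon whose boundary consists of interior Euclidean geodesic chords $\gamma_j$ joining vertices of $\Omega$ together with $A_i$ arcs, the finite data on the $B_i$ and $C_i$ keeping those arcs in $\overline{\Ucal}$. Applying $F_{u_n}(\dhr\Vcal_0)=0$ for each $n$ and letting $n\to\infty$, Proposition \ref{lem2.7} gives $F_{u_n}(\gamma_j)\to-\ell_\euc(\gamma_j)$ on the chords, whence $\sum_{A_i}F_{u_n}(A_i)\to\sum_j\ell_\euc(\gamma_j)$; combining this with the a priori bound $F_{u_n}(A_i)\le\ell_\euc(A_i)$ of Proposition \ref{lem2.5}(i) forces $\sum_j\ell_\euc(\gamma_j)\le a_\euc(\Vcal_0)$, that is $\ell_\euc(\Vcal_0)\le 2a_\euc(\Vcal_0)$, contradicting \eqref{cond1} for $\Pcal=\Vcal_0$. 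The decreasing family is treated identically with the $b_\euc$ form of \eqref{cond1}. This closes existence once $\Vcal=\emptyset$, since then $\{u_n\}$ converges on all of $\Omega$.

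\emph{Remaining points and the main obstacle.} For unbounded $\Omega$ I would use the finite-perimeter flux and the exhaustion by the domains $\Omega'_n$ already employed in Proposition \ref{lem2.5}(ii). In case (ii) there is no $C_i$ to anchor the solution, so I would run the construction with a normalization fixing the value at one interior point and use the equality \eqref{cond2} to keep both one-sided monotone limits finite, recovering existence and uniqueness up to an additive constant. I expect the main obstacle to be the divergence-set step, specifically making the flux limits on the $A_i$ edges of $\Vcal_0$ rigorous: there both the boundary data and the interior diverge, so Proposition \ref{lem2.7} does not apply verbatim and one must instead combine the one-sided bound $F_{u_n}(A_i)\le\ell_\euc(A_i)$ with the chord limits. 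A secondary point to verify is that the Scherk-domain hypotheses—no two $A_i$ and no two $B_i$ meeting at a convex corner—are exactly what excludes the degenerate components forbidden by Theorem \ref{SD}(ii),(iii), so that the polygon $\Vcal_0$ to which \eqref{cond1} is applied is genuinely admissible.
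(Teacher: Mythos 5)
Your necessity argument and your treatment of the divergence set coincide with the paper's: the paper also derives \eqref{cond1} and \eqref{cond2} from $F_u(\dhr\Pcal)=0$ via Proposition \ref{lem2.5}, and in Case \ref{cas2.2} it rules out a divergence component $\Pcal$ precisely as you anticipate, combining the chord limits from Lemma \ref{lem2.7} with only the one-sided bound $\left|\sum_iF_{u_n}(A_i\cap\dhr\Pcal)\right|\le a_\euc(\Pcal)$ on the infinite edges (so your closing remark about Lemma \ref{lem2.7} not applying verbatim on the $A_i$ is exactly how the paper proceeds). Your double monotone limit for case (i) is a legitimate variant of the paper's Case \ref{cas2.3}, which instead sandwiches the non-monotone sequence with data $\pm n$ and $[f_i]_{-n}^n$ between the solutions $u^\pm$ of Cases \ref{cas2.1}--\ref{cas2.2} and concludes with Theorems \ref{TC} and \ref{LVB}. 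However, two of your steps have genuine gaps.

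First, uniqueness. You propose to apply Theorem \ref{PMG} after ``checking that their difference is bounded near the infinite edges (using the common local Scherk barriers).'' This step cannot be supplied: the barriers of Propositions \ref{scherk2} and \ref{barrier} control a single solution near an edge $A_i$, but they give no information on the difference $u_1-u_2$ of two solutions that both tend to $+\infty$ there; $\limsup(u_1-u_2)$ along $A_i$ is an indeterminate $\infty-\infty$ which no comparison with a third function resolves. The paper never attempts such pointwise control; it proves the separate maximum principle Theorem \ref{PMNDM}, in which the infinite edges are handled only in an integrated sense, via the truncated test function $\left[u_1-u_2-\varepsilon\right]_0^{N-\varepsilon}$, the divergence theorem on the exhaustion $\Omega_{n,\delta}$, and the flux estimates $\ell_\euc(A_i')-F_{u_2}(A_i')=o(1)$, $\ell_\euc(B_i')+F_{u_1}(B_i')=o(1)$ from Proposition \ref{lem2.5}. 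Note also that your second (decreasing) monotone limit compares solutions that all equal $+\infty$ on the $A_i$, so even the monotonicity of that family needs Theorem \ref{PMNDM}, not Theorem \ref{PMG}; the gap therefore infects your existence argument as well.

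Second, case (ii). Your plan --- first an increasing limit with finite data $g$ on the $B_i$, then a decreasing limit, normalized ``at one interior point'' --- fails at the first step. When $\{C_i\}=\emptyset$, the flux contradiction that empties the divergence set needs the strict inequality $2a_\euc(\Pcal)<\ell_\euc(\Pcal)$, and hypothesis \eqref{cond2} gives equality for $\Pcal=\Omega$. In fact $\Vcal=\Omega$ must occur: if the first sequence converged, its limit would solve the Dirichlet problem with $+\infty$ on the $A_i$ and finite data on the $B_i$, and the necessity part you proved would force $2a_\euc(\Omega)<\ell_\euc(\Omega)$, contradicting $a_\euc(\Omega)=b_\euc(\Omega)$. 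Subtracting constants to renormalize destroys monotonicity, so Theorem \ref{TCM} no longer applies and no uniform local bound is available for Theorem \ref{TC}. This is precisely what the paper's Case \ref{cas2.4} is built to overcome: the normalization $\mu_n$ defined as the connectivity threshold of the sublevel sets $F_c$ of $v_n$, the sandwich $u^-\le v_n-\mu_n\le u^+$ constructed from the Case \ref{cas2.2} solutions $u_i^{\pm}$ by exploiting the disconnectedness of $E_{\mu_n}$ and $F_{\mu_n}$ (this is where the Scherk hypothesis that no two $A_i$ and no two $B_i$ meet at a convex corner is used), and finally the proof that $\mu_n\to\infty$ and $n-\mu_n\to\infty$. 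Without this machinery, case (ii) of the theorem remains unproved in your proposal.
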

This theorem is similar in spirit to that of \cite{JS66,NR02,CR10,Pin09}.
\begin{proof}
\textit{The uniqueness} of the solution is deduced from Theorem \ref{PMNDM}.

Let us now prove that the conditions of theorem \ref{J-S type1} are necessary for the existence.
Assume that there is a minimal graph $u$ on $\Omega$ satisfying the Dirichlet problem.
When $\{C_i\}=\emptyset$ and $\Pcal=\Omega$, using the proposition \ref{lem2.5}, we have
\begin{align*}
0=F_u(\dhr\Pcal)&=\sum_{A_i\subset\dhr\Pcal}F_u(A_i)+\sum_{B_i\subset\dhr\Pcal}F_u(B_i)\\
&=\sum_{A_i\subset\dhr\Pcal}\ell_\euc(A_i)+\sum_{B_i\subset\dhr\Pcal}-\ell_\euc(B_i)=a_\euc(\Pcal)-b_\euc(\Pcal),
\end{align*}
as the condition (\ref{cond2}).

In the other case, $\dhr\Pcal\setminus\left(\underset{A_i\subset\dhr\Pcal}{\bigcup}A_i\cup \underset{B_i\subset\dhr\Pcal}{\bigcup}B_i \right)\ne\emptyset$ and $u$ is continuous on this set. By Proposition \ref{lem2.5}, we have

\begin{align*}
0&=F_u(\dhr\Pcal)\\
&=\sum_{A_i\subset\dhr\Pcal}F_u(A_i)+\sum_{B_i\subset\dhr\Pcal}F_u(B_i)+F_u\left(\dhr\Pcal\setminus\left(\underset{A_i\subset\dhr\Pcal}{\bigcup}A_i\cup \underset{B_i\subset\dhr\Pcal}{\bigcup}B_i \right) \right),
\end{align*}
\begin{gather*}
\sum_{A_i\subset\dhr\Pcal}F_u(A_i)=\sum_{A_i\subset\dhr\Pcal}\ell_\euc(A_i)=a_\euc(\Pcal),\\
\sum_{B_i\subset\dhr\Pcal}F_u(B_i)=\sum_{B_i\subset\dhr\Pcal}-\ell_\euc(B_i)=-b_\euc(\Pcal)
\end{gather*}
and
\begin{align*}
\left|F_u\left(\dhr\Pcal\setminus\left(\underset{A_i\subset\dhr\Pcal}{\bigcup}A_i\cup \underset{B_i\subset\dhr\Pcal}{\bigcup}B_i \right) \right)\right|&<\ell_\euc\left(\dhr\Pcal\setminus\left(\underset{A_i\subset\dhr\Pcal}{\bigcup}A_i\cup \underset{B_i\subset\dhr\Pcal}{\bigcup}B_i \right) \right)\\
&=\ell_\euc(\Pcal)-a_\euc(\Pcal)-b_\euc(\Pcal).
\end{align*}
We obtain $|a_\euc(\Pcal)-b_\euc(\Pcal)|<\ell_\euc(\Pcal)-a_\euc(\Pcal)-b_\euc(\Pcal)$. It follows the conditions (\ref{cond1}).

Finally, we prove that the conditions of theorem \ref{J-S type1} are sufficient. We distinguish the following cases:

\begin{trh}\label{cas2.1}
First case: Assume that the families $\{A_i\}$ and $\{B_i\}$ are both empty and the continuous functions $f_i$ are bounded.
\end{trh}
\begin{proof} For any ideal vertex $p$ of $\Omega$, we take a net of geodesics $H_{p,n}$ which converges to $p$. Denote by $\Hcal_{p,n}$ the domain of $\hh^2$ delimited by $H_{p,n}$ such that the Euclidean mean convex vector of $H_{p,n}$ points interior.
Let us define $\Omega_n$ an Euclidean convex subdomain of $\Omega$ delimited by $\dhr\Omega\setminus\bigcup_{i}\Hcal_{i,n}$ and by the Euclidean geodesics in $\Omega\cap\bigcup_{i}\Hcal_{i,n}$ joining the points of $\dhr\Omega\cap\bigcup_{i}\Hcal_{i,n}.$

By Theorem \ref{TEG}, for each positive natural number $n$, there exists a minimal solution $u_n$ on an Euclidean polygonal domain of $\Omega_n$ such that
$$u_n=\begin{cases}
f_i&\text{ on } C_i\cap\dhr\Omega_n,\\
0 &\text{ on the rest of } \dhr\Omega_n.
\end{cases}$$

By Maximum theorem, Theorem \ref{PMG}, the sequence $\{u_n\}$ is uniformly bounded on $\Omega$. By Compactness theorem, Theorem \ref{TC}, there exists a subsequence of the sequence $\{u_n\}_n$ converges uniformly on every compact set of $\Omega$ to a minimal solution $u:\Omega\to\rr$ that obtains the values $f_i$ on $C_i$.
\renewcommand{\qedsymbol}{$\heartsuit$}
\end{proof}

\begin{trh}\label{cas2.2}
Second case: The family $\{B_i \}$ is empty and the functions $f_i$ are non-negative.
\end{trh}
\begin{proof}

There exists, by the previous step \ref{cas2.1}, for each $n$, a minimal solution $u_n$ on $\Omega$ such that
$$u_n=\begin{cases}
n&\text{ on } \bigcup_iA_i\\
\min\{n,f_i\} & \text{ on } C_i.
\end{cases}$$
It follows from the maximum principle, Theorem \ref{PMG}, that $0\le u_n\le u_{n+1}$ for each $n$.

\begin{kd}
The divergence set $\Vcal=\Vcal(\{u_n\})$ is empty.
\end{kd}
\begin{proof}
Assume the contrary, that $\Vcal$ is not empty. By the lemma \ref{lem-droite} and Theorem \ref{SD}, $\Vcal$ consists of a finite number of Euclidean polygonal domains inscribed in $\Omega$. Let $\Pcal$ be a component of $\Vcal$. By Lemmas \ref{lem2.5} and \ref{lem2.7}, we have
$$0=F_{u_n}(\dhr\Pcal)=\sum_iF_{u_n}(A_i\cap\dhr\Pcal)+F_{u_n}\left(\dhr\Pcal\setminus\bigcup_iA_i\right),$$
$$\left|\sum_iF_{u_n}(A_i\cap\dhr\Pcal)\right|\le \sum_i\left|F_{u_n}(A_i\cap\dhr\Pcal)\right|\le \sum_i\ell_\euc(A_i)=a_\euc(\Pcal),$$
$$\lim_{n\to\infty}F_{u_n}\left(\dhr\Pcal\setminus\bigcup_iA_i\right)=-\ell_\euc\left(\dhr\Pcal\setminus\bigcup_iA_i\right)=-(\ell_\euc(\Pcal)-a_\euc(\Pcal)).$$
We conclude that $\ell_\euc(\Pcal)-a_\euc(\Pcal)\le a_\euc(\Pcal)$, which contradicts with the condition (\ref{cond1}).
\renewcommand{\qedsymbol}{$\Diamond$}
\end{proof}
By the previous assertion, we have $\Ucal(\{u_n\})=\Omega.$ Thus $\{u_n\}$ converges uniformly on the compact sets of $\Omega$ to a minimal solution $u$. By Theorem \ref{LVB}, $u$ takes the values $+\infty$ on $A_i$ and $f_i$ on $C_i$.
\renewcommand{\qedsymbol}{$\heartsuit$}
\end{proof}

\begin{trh}\label{cas2.3}
Third case: the family $\{C_i\}$ is non-empty.
\end{trh}
\begin{proof}
By the previous step, \ref{cas2.1} and \ref{cas2.2}, there exists the minimal solutions $u^{+},u^-$ and $u_n$ on $\Omega$ with the following boundary values
\begin{center}
\begin{tabular}{c|c|c|c}
& $A_i$ & $B_i$ & $C_i$ \\
\hline $u^+$ & $+\infty$ & 0 & $\max\{f_i,0\} $ \\
\hline $u_n$ & $n$ & $-n$ & $[f_i]_{-n}^n$ \\
\hline $u^-$ & $0$ & $-\infty$ & $\min\{f_i,0\}.$\\
\end{tabular}
\end{center}
It follows from Theorem \ref{PMNDM}, that $u^-\le u_n\le u^+$ for each $n$. By the compactness theorem, Theorem \ref{TC} and a diagonal process, we can extract a subsequence of $\{u_n\}$ which converges on compact sets of $\Omega$ to a minimal graph $u$. Moreover, by Theorem \ref{LVB}, $u$ takes the desired boundary conditions.
\renewcommand{\qedsymbol}{$\heartsuit$}
\end{proof}

\begin{trh}\label{cas2.4}
Fourth case: The family $\{C_i\}$ is empty.
\end{trh}
\begin{proof}
We fix a positive natural number $n$. There exists, by Case \ref{cas2.1}, a minimal solution $v_n$ on $\Omega$ that obtains the values $n$ on $A_i$ and $0$ on $B_i$. It follows from Theorem \ref{PMNDM}, that $0\le v_n\le n.$ For each $c\in (0,n)$, we define
$$E_c=\{v_n>c\},\qquad F_c=\{v_n<c\}.$$
Since $v_n=n$ on $A_i$, there exists a component $E^i_c$ of $E_c$ satifying $A_i\subset\dhr E^i_c$. Moreover, by the maximum principle, Theorem \ref{PMNDM}, $E_c=\bigcup_iE^i_c.$ Similarly, there exists, for each $i$, a component $F^i_c$ of $F_c$ satifying $B_i\subset\dhr F^i_c$, and, we have $F_c=\bigcup_iF^i_c$. A detailed proof can be found in \cite[Proof of Theorem 1]{CR10}.
We define
$$\mu_n=\inf\{c\in(0,n): \text{ the set $F_c$ is connex} \},\qquad u_n=v_n-\mu_n.$$
By definition, $u_n$ is a minimal solution on $\Omega$ which take the values $n-\mu_n$ on $A_i$ and $-\mu_n$ on $B_i$.
\begin{kd}
There exist two piecewise minimal solutions $u^+,u^-$ on $\Omega$ such that $u^-\le u_n\le u^+$ for every $n$.
\end{kd}
\begin{proof}
There exist, by the case \ref{cas2.2}, the minimal solutions $u_i^\pm$ on $\Omega$ such that$$u^+_i=\begin{cases}
\infty&\text{ on } \bigcup_{i'\ne i}A_{i'},\\
0&\text{ on } A_i\cup\bigcup_jB_j,
\end{cases}\qquad
u^-_i=\begin{cases}
-\infty&\text{ on } \bigcup_{i'\ne i}B_{i'},\\
0&\text{ on } B_i\cup\bigcup_jA_j.
\end{cases}
$$
Define $$u^+=\max_i u^+_i,\qquad u^-=\min_i u^-_i.$$
Observe that, by definition of $\mu_n$, both $E_{\mu_n}$ and $F_{\mu_n}$ are disconnected. In particular, for
every $i_1$, there exists an $i_2$ such that $E^{i_1}_{\mu_n}\cap E^{i_2}_{\mu_n}=\emptyset$ and we obtain, applying the maximum
principle,
$$0\le u_n|_{E^{i_1}_{\mu_n}}
\le u^+_{i_2}|_{E^{i_1}_{\mu_n}}.$$
Similarly, for every
$j_1$, there exists an $j_2$ such that $F^{j_1}_{\mu_n}\cap F^{j_2}_{\mu_n}=\emptyset$ and we obtain, applying the maximum
principle,
$$u^-_{j_2}|_{F^{j_1}_{\mu_n}}\le u_n|_{F^{j_1}_{\mu_n}}
\le0.$$
It follows that $u^-\le u_n\le u^+$ for every $n$.
\renewcommand{\qedsymbol}{$\Diamond$}
\end{proof}
By the previous assertion and the compactness theorem, Theorem \ref{TC}, there exists a subsequence $\{u_{\sigma(n)}\}$ of $\{u_n\}$ that converges on compact sets of $\Omega$ to a minimal solution $u$.
\begin{kd}
$$\lim_{n\to\infty}\mu_{\sigma(n)}=\infty,\qquad \lim_{n\to\infty} (n-\mu_{\sigma(n)})=\infty.$$
\end{kd}
\begin{proof}
Assume the contrary, that there exists a subsequence $\{\mu_{\sigma'(n)}\}$ of $\{\mu_{\sigma(n)}\}$ that converges to some $\mu_\infty$.
Then, by definition of $u$, that $u$ takes the values $\infty$ on $A_i$ and $-\mu_\infty$ on $B_i$. So, by the proof of necessity, $2a_\euc(\Omega)<\ell_\euc(\Omega)$, which contradicts with hypothesic \ref{cond1}. Then $\lim_{n\to\infty}\mu_{\sigma(n)}=\infty$. In the same way, we can show that $\lim_{n\to\infty} (n-\mu_{\sigma(n)})=\infty.$
\renewcommand{\qedsymbol}{$\Diamond$}
\end{proof}
So, by the previous assertion, we conclude $u$ takes $+\infty$ on $A_i$ and $-\infty$ on $B_i$.
\renewcommand{\qedsymbol}{$\heartsuit$}
\end{proof}
This completes the proof of the existence part of the theorem.
\end{proof}

The remainder of this section will be devoted to the proof of the uniqueness of Theorem \ref{J-S type1}.

\begin{dl}\label{PMNDM}{\rm (Maximum principle for unbounded domains with possible infinite boundary data)} Let $\Omega\subset\hh^2$ be a Scherk domain. Let $u_1,u_2$ be two solutions of type Jenkins-Serrin on $\Omega$. If the family $\{C_i\}$ is non-empty, assume that $\limsup (u_1-u_2)\le 0$ when ones approache to $\bigcup_iC_i$. If $\{C_i\}$ is empty, suppose that $u_1\le u_2$ at some point $p\in\Omega$. Then in either case $u_1\le u_2$ on $\Omega$.
\end{dl}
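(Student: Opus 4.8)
The plan is to run the truncation-and-flux argument from the proof of the general maximum principle (Theorem \ref{PMG}), now absorbing the infinite boundary data on the edges $A_i$ and $B_i$ through the flux formula of Proposition \ref{lem2.5}. Assume for contradiction that the open set $\Omega^+:=\{u_1>u_2\}$ is non-empty. For $\varepsilon>0$ small and $N>0$ large, define the continuous, piecewise-$C^1$ cut-off
$$\varphi=[u_1-u_2-\varepsilon]_0^N,$$
exactly as in the proof of Theorem \ref{PMG}, so that $0\le\varphi\le N$, with $\nabla\varphi=\nabla u_1-\nabla u_2$ on $\{\varepsilon<u_1-u_2<N+\varepsilon\}$ and $\nabla\varphi=0$ elsewhere.

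First I would build exhausting subdomains adapted to the three types of boundary arc. Following the construction in Theorem \ref{PMG}, I remove a neighborhood $\overline\Dbb_{1/n}(p)$ of each finite vertex and a region $\overline\Hcal_{p,n}$ (bounded by the nested geodesics $H_{p,n}$) of each ideal vertex; in addition I remove the Euclidean $\delta$-tubular neighborhood of each edge $A_i$ and $B_i$, calling the result $\Omega_{n,\delta}$, a relatively compact subdomain on which the divergence theorem applies. Set
$$J_{n,\delta}=\int_{\dhr\Omega_{n,\delta}}\varphi\,y\,\vh{X_{u_1}-X_{u_2},\nu}\,\dsf s,$$
with $\nu$ the outward normal. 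Since $\Div\bigl(y(X_{u_1}-X_{u_2})\bigr)=\Mcal u_1-\Mcal u_2=0$, the divergence theorem gives $J_{n,\delta}=\int_{\Omega_{n,\delta}}\vh{y\nabla\varphi,X_{u_1}-X_{u_2}}\,\dsf\Acal$, and by Lemma \ref{lem cle} applied to $v_i=y\nabla u_i$ (so that $W_i=W_{u_i}$ and $v_i/W_i=X_{u_i}$) the integrand is $\ge 0$, vanishing precisely where $\nabla u_1=\nabla u_2$.

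The heart of the matter is to show $J_{n,\delta}\to 0$ as $n\to\infty$ and $\delta\to 0$. On the portions of $\dhr\Omega_{n,\delta}$ lying in the $C_i$, the hypothesis $\limsup(u_1-u_2)\le 0$ forces $\varphi\equiv 0$ near $C_i$, so these contribute nothing. On the arcs cutting off finite and ideal vertices, $\ell_\euc\to 0$ while $|\varphi|\le N$ and $|\vh{X_{u_1}-X_{u_2},\nu}|\le 2$, so their contribution vanishes. The genuinely new step is the estimate along the arcs parallel to $A_i$ and $B_i$: there $\varphi$ is merely bounded by $N$, but since both $u_1$ and $u_2$ diverge to $+\infty$ on $A_i$ (resp. to $-\infty$ on $B_i$), equation (\ref{equ:pt1}) from the proof of Proposition \ref{lem2.5}(iv) gives $\vh{X_{u_j},\nu}\to 1$ (resp. $\to -1$) for $j=1,2$, uniformly on compact subarcs as $\delta\to 0$; hence $\vh{X_{u_1}-X_{u_2},\nu}\to 0$. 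Bounding the integrand by $N\,|\vh{X_{u_1}-X_{u_2},\nu}|$ and using that the Euclidean length of these arcs stays bounded makes this contribution tend to $0$. This cancellation of the two diverging boundary fluxes is the main obstacle and the essential point of the theorem.

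Consequently the nonnegative interior integral vanishes, so $\nabla u_1=\nabla u_2$ on $\{\varepsilon<u_1-u_2<N+\varepsilon\}$; letting $\varepsilon\to 0$ and $N\to\infty$ yields $\nabla u_1=\nabla u_2$, hence $u_1=u_2+c$ with $c>0$, on each connected component $D$ of $\Omega^+$. To finish, I distinguish two cases. If $D\ne\Omega$ then, $\Omega$ being connected, $\dhr D\cap\Omega\ne\emptyset$ and $u_1=u_2$ there by continuity, forcing $c=0$, a contradiction. If $D=\Omega$, then when $\{C_i\}$ is non-empty the constant $c>0$ contradicts $\limsup(u_1-u_2)\le 0$ along some $C_i$, while when $\{C_i\}$ is empty it contradicts the assumption $u_1(p)\le u_2(p)$. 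In all cases $\Omega^+=\emptyset$, that is $u_1\le u_2$ on $\Omega$.
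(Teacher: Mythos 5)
Your proof is correct, and its skeleton is the same as the paper's: contradiction via $\{u_1>u_2\}\ne\emptyset$, the truncated cut-off $\varphi$, an exhaustion $\Omega_{n,\delta}$ obtained by removing vertex neighborhoods and tubular neighborhoods of the boundary edges, the divergence theorem combined with Lemma \ref{lem cle}, vanishing of the $C_i$ and $\Gamma$ contributions, and the final passage from $\nabla u_1=\nabla u_2$ on $\{u_1>u_2\}$ to a contradiction. The one genuine difference is the treatment of the boundary terms along the arcs parallel to $A_i$ and $B_i$, which is indeed the crux. You kill them by invoking the normal convergence (\ref{equ:pt1}) for \emph{both} $u_1$ and $u_2$, so that $\vh{X_{u_1}-X_{u_2},\nu}\to 0$ there; this effectively inlines the proof of Proposition \ref{lem2.5}(iv), and it requires two extra justifications that you supply only implicitly: the upgrade of (\ref{equ:pt1}) from pointwise to uniform convergence on compact subarcs (a compactness argument, valid because the vertices have been cut away), and a diagonal choice $\delta=\delta(n)$ to handle the joint limit. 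The paper instead stays at the level of integrated fluxes: using $\vh{X_{u_1},\nu}\le 1$ it bounds $\int_{A_i'}\varphi y\vh{X_{u_1}-X_{u_2},\nu}\,\dsf s\le N\bigl(\ell_\euc(A_i')-F_{u_2}(A_i')\bigr)$, and then shows $\ell_\euc(A_i')-F_{u_2}(A_i')=o(1)$ by applying the flux balance of Proposition \ref{lem2.5}(ii) and the identity of Proposition \ref{lem2.5}(iv) to the thin strip between $A_i'$ and $A_i$ (symmetrically for $B_i'$ with $u_1$). The paper's route buys cleaner bookkeeping — since $J_n\ge 0$, one-sided estimates suffice and no uniformity statement is needed — while your route makes the cancellation of the two diverging boundary fluxes explicit and pointwise, at the cost of the uniformity argument. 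Your closing step (continuity of $u_1-u_2$ across $\dhr D\cap\Omega$ forcing the component to be all of $\Omega$) is a harmless, slightly more elementary substitute for the paper's appeal to Proposition \ref{PM}; both reach the same contradiction with the hypotheses.
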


\begin{proof}
Assume the contrary, that the set $\{u_1>u_2\}$ is not empty.

Let $N$, $\varepsilon$ be two positive constants with $N$ large, $\varepsilon$ small. Define a function
$$\varphi=\left[u_1-u_2-\varepsilon\right]_{0}^{N-\varepsilon}=\begin{cases}
N-\varepsilon&u_1-u_2\ge N\\
u_1-u_2-\varepsilon& \varepsilon<u_1-u_2<N\\
0&u_1-u_2\le \varepsilon.
\end{cases}$$
Then $\varphi$ is Lipschitz and vanishes in a neighborhood of any point of $C_i$, $0\le \varphi<N$ and $\nabla\varphi=\nabla u_1-\nabla u_2$ on the set $\{\varepsilon<u_1-u_2<N \}$. Moreover, $\nabla\varphi=0$ almost everywhere in the complement of this set.
For each ideal vertex $p$ of $\Omega$, we take a net of geodesics $H_{p,n}$ that converges to $p$. Denote by $\Hcal_{p,n}$ the domain of $\hh^2$ delimited by $H_{p,n}$ such that the Euclidean mean convex vector of $H_{p,n}$ points interior.
Define
\begin{gather*}
\Omega_{n,\delta}=\Omega\setminus\left(\overline{\Dbb}_{\delta}(\dhr\Omega)\cup \bigcup_{p\in E_1}\overline \Dbb_{\frac{1}{n}}(p)\cup\bigcup_{p\in E_2}\overline{\Hcal}_{p,n}\right),\\ X_i'=\dhr\Omega_{n,\delta}\cap\overline{\Dbb}_\delta(X_i), \qquad (X\in\{A,B,C\})
\end{gather*}
and
$$\Gamma=\dhr\Omega_{n,\delta}\setminus\left(\bigcup_iA'_i\cup\bigcup_iB'_i\cup\bigcup_iC'_i\right),$$
where $E_1$ (resp. $E_2$) is the set of vertices (resp. ideal vertices) of $\Omega$ and $0<\delta=\delta(n)\ll\frac{1}{n}.$ (See Figure \ref{F13}).


\begin{figure}[!h]
\centering
\definecolor{ffqqqq}{rgb}{1,0,0}
\definecolor{qqqqff}{rgb}{0,0,1}
\begin{tikzpicture}[line cap=round,line join=round,>=triangle 45,x=1.0cm,y=1.0cm]
\clip(-2.02,-1.78) rectangle (8.12,5.66);
\draw (2.24,-1.02)-- (0,1);
\draw [shift={(1.8,1.2)}] plot[domain=1.46:3.25,variable=\t]({1*1.81*cos(\t r)+0*1.81*sin(\t r)},{0*1.81*cos(\t r)+1*1.81*sin(\t r)});
\draw [shift={(4.4,1.82)}] plot[domain=-1.17:1.82,variable=\t]({1*3.05*cos(\t r)+0*3.05*sin(\t r)},{0*3.05*cos(\t r)+1*3.05*sin(\t r)});
\draw [shift={(1.78,0.98)}] plot[domain=4.94:6.06,variable=\t]({1*2.05*cos(\t r)+0*2.05*sin(\t r)},{0*2.05*cos(\t r)+1*2.05*sin(\t r)});
\draw [shift={(0.91,2.19)}] plot[domain=-0.4:0.64,variable=\t]({1*1.35*cos(\t r)+0*1.35*sin(\t r)},{0*1.35*cos(\t r)+1*1.35*sin(\t r)});
\draw [shift={(3.43,4.87)}] plot[domain=4.06:4.76,variable=\t]({1*2.35*cos(\t r)+0*2.35*sin(\t r)},{0*2.35*cos(\t r)+1*2.35*sin(\t r)});
\draw (2.16,1.66)-- (3.54,2.52);
\draw [shift={(6.3,0.12)}] plot[domain=2.02:3.08,variable=\t]({1*1*cos(\t r)+0*1*sin(\t r)},{0*1*cos(\t r)+1*1*sin(\t r)});
\draw (2.24,-1.02)-- (2.82,0.48);
\draw [shift={(-1.54,-1)}] plot[domain=0:0.51,variable=\t]({1*3.78*cos(\t r)+0*3.78*sin(\t r)},{0*3.78*cos(\t r)+1*3.78*sin(\t r)});
\draw [shift={(2.73,1.96)}] plot[domain=4:4.77,variable=\t]({1*1.49*cos(\t r)+0*1.49*sin(\t r)},{0*1.49*cos(\t r)+1*1.49*sin(\t r)});
\draw [->,dash pattern=on 4pt off 4pt] (-2,-1) -- (8,-1);
\draw (7.18,5.24) node[anchor=north west] {$\mathbb{H}^2$};
\draw (3.48,1.98) node[anchor=north west] {$\Omega$};
\draw (0.4,0.4) node[anchor=north west] {$A_1$};
\draw (0.08,3.16) node[anchor=north west] {$C_1$};
\draw (7.24,1.04) node[anchor=north west] {$C_4$};
\draw (2,3)-- (3.66,4.78);
\draw (2.28,4.52) node[anchor=north west] {$B_1$};
\draw (3.78,0.52)-- (5.6,-0.98);
\draw (4.72,1.18)-- (5.3,0.18);
\draw (4.72,1.18)-- (5.86,1.02);
\draw [dash pattern=on 2pt off 2pt] (2,3) circle (0.32cm);
\draw [dash pattern=on 2pt off 2pt] (3.66,4.78) circle (0.32cm);
\draw [dash pattern=on 2pt off 2pt] (5.86,1.02) circle (0.32cm);
\draw [dash pattern=on 2pt off 2pt] (4.72,1.18) circle (0.32cm);
\draw [dash pattern=on 2pt off 2pt] (5.3,0.18) circle (0.32cm);
\draw [dash pattern=on 2pt off 2pt] (3.78,0.52) circle (0.32cm);
\draw [dash pattern=on 2pt off 2pt] (1.76,0.84) circle (0.32cm);
\draw [dash pattern=on 2pt off 2pt] (0,1) circle (0.32cm);
\draw [dash pattern=on 2pt off 2pt] (2.24,-1.02) circle (0.42cm);
\draw [dash pattern=on 2pt off 2pt] (5.6,-0.98) circle (0.42cm);
\draw [dash pattern=on 2pt off 2pt] (3.54,2.52) circle (0.32cm);
\draw [dash pattern=on 2pt off 2pt] (2.82,0.48) circle (0.32cm);
\draw [dash pattern=on 2pt off 2pt] (2.16,1.66) circle (0.32cm);
\draw [shift={(1.8,1.2)},dash pattern=on 1pt off 2pt on 4pt off 4pt,color=ffqqqq]  plot[domain=1.61:3.09,variable=\t]({1*1.63*cos(\t r)+0*1.63*sin(\t r)},{0*1.63*cos(\t r)+1*1.63*sin(\t r)});
\draw [shift={(-1.54,-1)},dash pattern=on 1pt off 2pt on 4pt off 4pt,color=ffqqqq]  plot[domain=0.1:0.44,variable=\t]({1*3.63*cos(\t r)+0*3.63*sin(\t r)},{0*3.63*cos(\t r)+1*3.63*sin(\t r)});
\draw [shift={(1.78,0.98)},dash pattern=on 1pt off 2pt on 4pt off 4pt,color=ffqqqq]  plot[domain=5.14:5.92,variable=\t]({1*1.89*cos(\t r)+0*1.89*sin(\t r)},{0*1.89*cos(\t r)+1*1.89*sin(\t r)});
\draw [shift={(0.91,2.19)},dash pattern=on 1pt off 2pt on 4pt off 4pt,color=ffqqqq]  plot[domain=-0.19:0.43,variable=\t]({1*1.17*cos(\t r)+0*1.17*sin(\t r)},{0*1.17*cos(\t r)+1*1.17*sin(\t r)});
\draw [shift={(3.43,4.87)},dash pattern=on 1pt off 2pt on 4pt off 4pt,color=ffqqqq]  plot[domain=4.19:4.64,variable=\t]({1*2.21*cos(\t r)+0*2.21*sin(\t r)},{0*2.21*cos(\t r)+1*2.21*sin(\t r)});
\draw [shift={(2.73,1.96)},dash pattern=on 1pt off 2pt on 4pt off 4pt,color=ffqqqq]  plot[domain=4.19:4.57,variable=\t]({1*1.3*cos(\t r)+0*1.3*sin(\t r)},{0*1.3*cos(\t r)+1*1.3*sin(\t r)});
\draw [shift={(6.3,0.12)},dash pattern=on 1pt off 2pt on 4pt off 4pt,color=ffqqqq]  plot[domain=2.34:2.76,variable=\t]({1*0.86*cos(\t r)+0*0.86*sin(\t r)},{0*0.86*cos(\t r)+1*0.86*sin(\t r)});
\draw [shift={(4.4,1.82)},dash pattern=on 1pt off 2pt on 4pt off 4pt,color=ffqqqq]  plot[domain=-1.03:1.72,variable=\t]({1*2.91*cos(\t r)+0*2.91*sin(\t r)},{0*2.91*cos(\t r)+1*2.91*sin(\t r)});
\draw [dash pattern=on 1pt off 2pt on 4pt off 4pt,color=ffqqqq] (2.3,3.12)-- (3.56,4.48);
\draw [dash pattern=on 1pt off 2pt on 4pt off 4pt,color=ffqqqq] (5.03,1.28)-- (5.59,1.18);
\draw [dash pattern=on 1pt off 2pt on 4pt off 4pt,color=ffqqqq] (4.76,0.86)-- (5.03,0.36);
\draw [dash pattern=on 1pt off 2pt on 4pt off 4pt,color=ffqqqq] (4.1,0.48)-- (5.41,-0.6);
\draw [dash pattern=on 1pt off 2pt on 4pt off 4pt,color=ffqqqq] (2.86,0.16)-- (2.53,-0.71);
\draw [dash pattern=on 1pt off 2pt on 4pt off 4pt,color=ffqqqq] (2.48,1.68)-- (3.39,2.24);
\draw [dash pattern=on 1pt off 2pt on 4pt off 4pt,color=ffqqqq] (0.32,0.94)-- (2.07,-0.64);
\draw [->,dash pattern=on 4pt off 4pt] (-1,-1.72) -- (-0.96,5.48);
\draw (7.7,-0.4) node[anchor=north west] {$x$};
\draw (-0.9,5.68) node[anchor=north west] {$y$};
\begin{scriptsize}
\draw [fill=qqqqff] (2.24,-1.02) circle (1.5pt);
\draw [fill=qqqqff] (0,1) circle (1.5pt);
\draw [fill=qqqqff] (2,3) circle (1.5pt);
\draw [fill=qqqqff] (5.6,-0.98) circle (1.5pt);
\draw [fill=qqqqff] (3.66,4.78) circle (1.5pt);
\draw [fill=qqqqff] (3.78,0.52) circle (1.5pt);
\draw [fill=qqqqff] (2.16,1.66) circle (1.5pt);
\draw [fill=qqqqff] (3.54,2.52) circle (1.5pt);
\draw [fill=qqqqff] (4.72,1.18) circle (1.5pt);
\draw [fill=qqqqff] (5.86,1.02) circle (1.5pt);
\draw [fill=qqqqff] (5.3,0.18) circle (1.5pt);
\draw [fill=qqqqff] (2.82,0.48) circle (1.5pt);
\draw [fill=black] (-1,-1) circle (1.5pt);
\draw[color=black] (-0.86,-0.72) node {$O$};
\draw [fill=qqqqff] (1.76,0.84) circle (1.5pt);
\end{scriptsize}
\end{tikzpicture}
\caption{The domain $\Omega_{n,\delta}$}\label{F13}
\end{figure}
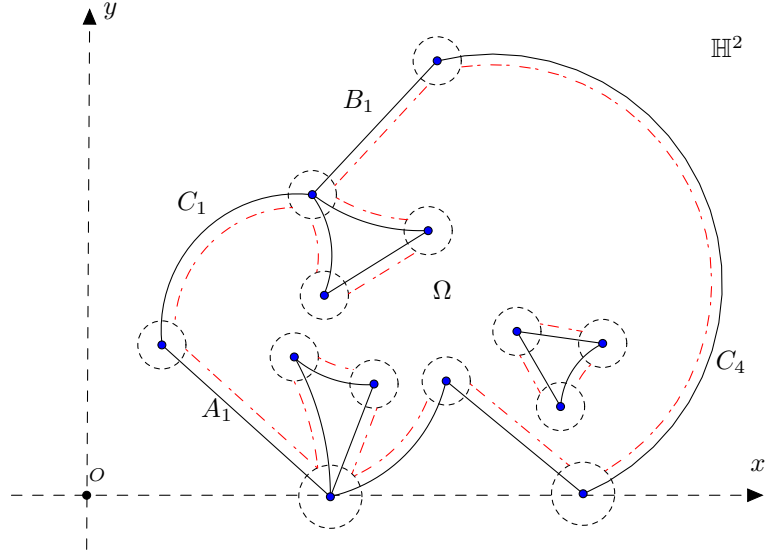

Define
$$J_n=\int_{\dhr\Omega_{n,\delta}}\varphi y\vh{X_{u_1}-X_{u_2},\nu}\,\dsf s,$$
where $\nu$ is the exterior normal to $\dhr\Omega_{n,\delta}$.
\begin{kd}
\begin{itemize}
\item[\rm (i)] $J_n\ge 0$, equality if and only if $\nabla u_1=\nabla u_2$ on the set $\{x\in\Omega_{n,\delta}:\varepsilon<u_1-u_2<N\}$.
\item[\rm (ii)] $J_n$ is increasing as $n\to \infty$.
\end{itemize}
\end{kd}
\begin{proof}
By Divergence theorem, we have
\begin{align*}
J_n&=\int_{\Omega_{n,\delta}}\Div\left( \varphi y(X_{u_1}-X_{u_2})\right)\,\dsf \Acal\\
&=\int_{\Omega_{n,\delta}}\langle y\nabla\varphi,X_{u_1}-X_{u_2}\rangle\,\dsf \Acal+\int_{\Omega_{n,\delta}}\varphi\Div(yX_{u_1}-yX_{u_2})\,\dsf \Acal.
\end{align*}
By the hypotheses, we obtain
$$\varphi\Div(yX_{u_1}-yX_{u_2})=\varphi(\Mcal u_1-\Mcal u_2)= 0.$$
Moreover, by Lemma \ref{lem cle}
$$\langle y\nabla\varphi,X_{u_1}-X_{u_2}\rangle=\vh{y\nabla u_1-y\nabla u_2,\frac{y\nabla u_1}{W_{u_1}}-\frac{y\nabla u_2}{W_{u_2}}}\ge 0.$$

\renewcommand{\qedsymbol}{$\Diamond$}
\end{proof}

\begin{kd}
$J_n=o(1)$ as $n\to\infty$.
\end{kd}
\begin{proof}
We have
\begin{align*}
J_n=&\sum_i\int_{A'_i}\varphi y\vh{X_{u_1}-X_{u_2},\nu}\,\dsf s+
\sum_i\int_{B'_i}\varphi y\vh{X_{u_1}-X_{u_2},\nu}\,\dsf s\\
&+\sum_i\int_{C'_i}\varphi y\vh{X_{u_1}-X_{u_2},\nu}\,\dsf s+\int_{\Gamma}\varphi y\vh{X_{u_1}-X_{u_2},\nu}\,\dsf s.
\end{align*}
Since $\varphi=0$ on a neighborhood of $\bigcup_iC_i$, we have
$$\sum_i\int_{C_i}\varphi y\vh{X_{u_1}-X_{u_2},\nu}\,\dsf s=0.$$
Moreover, since $\td{X_{u_i}}\le 1,i=1,2;$ $\varphi\le N$
$$\left|\int_{\Gamma}\varphi y\vh{X_{u_1}-X_{u_2},\nu}\,\dsf s\right|\le 2N\ell_\euc(\Gamma)=o(1).$$
By Lemma \ref{lem2.5}, then
$$\int_{A'_i}\varphi y\vh{X_{u_1}-X_{u_2},\nu}\,\dsf s\le N(\ell_\euc(A'_i)-F_{u_2}(A'_i)),$$
$$\int_{B'_i}\varphi y\vh{X_{u_1}-X_{u_2},\nu}\,\dsf s\le N(\ell_\euc(B'_i)+F_{u_1}(B'_i)).$$
For every $i$ and $X\in\{A,B\}$, denote by $\Omega^X_i$ a component of $\Omega\setminus\Omega_{n,\delta}$ such that $X'_i\subset\dhr\Omega^X_i$ and define $X''_i=\dhr\Omega^X_i\cap X_i$.
By Lemma \ref{lem2.5}, we have
$$0=F_u\Ngoac{\dhr\Omega^X_i}=F_u(X''_i)-F_u(X'_i)+F_u\Ngoac{\dhr\Omega^X_i\setminus(X'_i\cup X''_i)},$$
$$F_u(X''_i)=\begin{cases}
\ell_\euc(A''_i)=\ell_\euc(A'_i)+o\left(1\right)& \text{ si } X_i=A_i,\\
\ell_\euc(B''_i)=-\ell_\euc(B'_i)+o\left(1\right)& \text{ si } X_i=B_i
\end{cases}, $$
$$\left|F_u(\dhr\Omega^X_i\setminus(X'_i\cup X''_i))\right|\le \ell_{\euc}\left(\dhr\Omega^X_i\setminus(X'_i\cup X''_i)\right)=o(1),$$
where $u\in\{u_1,u_2\}$. So
$$\ell_\euc(A'_i)-F_{u_2}(A'_i)=o(1),\qquad \ell_\euc(B'_i)+F_{u_1}(B'_i)=o(1).$$
This proves the assertion.
\renewcommand{\qedsymbol}{$\Diamond$}
\end{proof}

It follows from the previous assertions that $\nabla u_1=\nabla u_2$ on the set $\{\varepsilon<u_1-u_2<N \}$. Since $\varepsilon>0$ and $N$ are arbitrary, $\nabla u_1=\nabla u_2$ whenever $u_1> u_2$. It follows that $u_1=u_2+c, (c>0)$ in any nontrivial component of the set $\{u_1>u_2\}$. Then the maximum principle, Theorem \ref{PM}, ensures  $u_1=u_2+c$ in $\Omega$ and by the assumptions of the theorem, the constant must nonpositive, a contradiction.
\end{proof}

\subsection*{Acknowledgements}

This is part of my Ph.D. thesis, written  at Universit\'e Paul Sabatier, Toulouse. 
I wish to thank Laurent Hauswirth for drawing my attention to the problem of type Jenkins-Serrin in $\sol$. 
I would like to express my deep gratitude to Pascal Collin and Laurent Hauswirth for many helpful suggestions during the preparation of the paper.


\begin{thebibliography}{10}

\bibitem{CM11}
Tobias~Holck Colding and William~P. Minicozzi, II.
\newblock {\em A course in minimal surfaces}, volume 121 of {\em Graduate
  Studies in Mathematics}.
\newblock American Mathematical Society, Providence, RI, 2011.

\bibitem{CR10}
Pascal Collin and Harold Rosenberg.
\newblock Construction of harmonic diffeomorphisms and minimal graphs.
\newblock {\em Ann. of Math. (2)}, 172(3):1879--1906, 2010.

\bibitem{DHM09}
Beno{\^{\i}}t Daniel, Laurent Hauswirth, and Pablo Mira.
\newblock Constant mean curvature surfaces in homogeneous 3-manifolds.
\newblock {\em Lectures Notes of the 4th KIAS Workshop on Differential
  Geometry, Constant mean curvature surfaces in homogeneous manifolds, Seoul},
  2009.

\bibitem{GR10}
Jos{\'e}~A. G{\'a}lvez and Harold Rosenberg.
\newblock Minimal surfaces and harmonic diffeomorphisms from the complex plane
  onto certain {H}adamard surfaces.
\newblock {\em Amer. J. Math.}, 132(5):1249--1273, 2010.

\bibitem{GT01}
David Gilbarg and Neil~S. Trudinger.
\newblock {\em Elliptic partial differential equations of second order}.
\newblock Classics in Mathematics. Springer-Verlag, Berlin, 2001.
\newblock Reprint of the 1998 edition.

\bibitem{HRS09}
Laurent Hauswirth, Harold Rosenberg, and Joel Spruck.
\newblock Infinite boundary value problems for constant mean curvature graphs
  in {$\Bbb H\sp 2\times\Bbb R$} and {$\Bbb S\sp 2\times\Bbb R$}.
\newblock {\em Amer. J. Math.}, 131(1):195--226, 2009.

\bibitem{JS66}
Howard Jenkins and James Serrin.
\newblock Variational problems of minimal surface type. {II}. {B}oundary value
  problems for the minimal surface equation.
\newblock {\em Arch. Rational Mech. Anal.}, 21:321--342, 1966.

\bibitem{Jos85}
J{\"u}rgen Jost.
\newblock Conformal mappings and the {P}lateau-{D}ouglas problem in
  {R}iemannian manifolds.
\newblock {\em J. Reine Angew. Math.}, 359:37--54, 1985.

\bibitem{MRR11}
L.~Mazet, M.~M. Rodr{\'{\i}}guez, and H.~Rosenberg.
\newblock The {D}irichlet problem for the minimal surface equation, with
  possible infinite boundary data, over domains in a {R}iemannian surface.
\newblock {\em Proc. Lond. Math. Soc. (3)}, 102(6):985--1023, 2011.

\bibitem{Maz13}
Laurent Mazet.
\newblock Cylindrically bounded constant mean curvature surfaces in
  $\mathbb{H}^2\times\mathbb{R}$.
\newblock arXiv:1203.2746v3 [math.DG], 2013.

\bibitem{MY82}
William~W. Meeks, III and Shing~Tung Yau.
\newblock The existence of embedded minimal surfaces and the problem of
  uniqueness.
\newblock {\em Math. Z.}, 179(2):151--168, 1982.

\bibitem{Men13}
Ana Menezes.
\newblock Periodic minimal surfaces in semidirect products.
\newblock arXiv:1309.2552 [math.DG], 2013.

\bibitem{NR02}
Barbara Nelli and Harold Rosenberg.
\newblock Minimal surfaces in {${\Bbb H}\sp 2\times\Bbb R$}.
\newblock {\em Bull. Braz. Math. Soc. (N.S.)}, 33(2):263--292, 2002.

\bibitem{Pin09}
Ana~Lucia Pinheiro.
\newblock A {J}enkins-{S}errin theorem in {$M\sp 2\times\Bbb R$}.
\newblock {\em Bull. Braz. Math. Soc. (N.S.)}, 40(1):117--148, 2009.

\bibitem{Sch83}
Richard Schoen.
\newblock Estimates for stable minimal surfaces in three-dimensional manifolds.
\newblock In {\em Seminar on minimal submanifolds}, volume 103 of {\em Ann. of
  Math. Stud.}, pages 111--126. Princeton Univ. Press, Princeton, NJ, 1983.

\bibitem{Spr72}
Joel Spruck.
\newblock Infinite boundary value problems for surfaces of constant mean
  curvature.
\newblock {\em Arch. Rational Mech. Anal.}, 49:1--31, 1972/73.

\bibitem{Spr07}
Joel Spruck.
\newblock Interior gradient estimates and existence theorems for constant mean
  curvature graphs in {$M\sp n\times\bold R$}.
\newblock {\em Pure Appl. Math. Q.}, 3(3, Special Issue: In honor of Leon
  Simon. Part 2):785--800, 2007.

\bibitem{You10}
Rami Younes.
\newblock Minimal surfaces in {$\widetilde{PSL\sb 2(\Bbb R)}$}.
\newblock {\em Illinois J. Math.}, 54(2):671--712, 2010.

\end{thebibliography}
\end{document}